\newcommand{\R}{\mathbb{R}}
\newcommand{\N}{\mathbb{N}}
\newcommand{\V}{\mathbb{V}}
\newcommand{\W}{\mathbb{W}}
\newcommand{\HB}{\mathbb{H}}
\newcommand{\LB}{\mathbb{L}}
\newcommand{\T}{\mathbb{T}}
\theoremstyle{plain}
\newtheorem{theorem}{Theorem}[section]
\newtheorem{lemma}[theorem]{Lemma}
\newtheorem{proposition}[theorem]{Proposition}
\theoremstyle{definition}
\newtheorem{definition}[theorem]{Definition}
\theoremstyle{remark}
\numberwithin{equation}{section}
\newcommand{\RR}{\mathcal{R}}
\newcommand{\lnorm}[2]{\left\|#1\right\|_{\LB^{#2}(\Omega)}}
\newcommand{\lnormt}[2]{\left\|#1\right\|_{\LB^{#2}(\Omega_T)}}
\newcommand{\hnorm}[2]{\left\|#1\right\|_{\HB^{#2}(\Omega)}}
\newcommand{\hnormt}[2]{\left\|#1\right\|_{\HB^{#2}(\Omega_T)}}
\newcommand{\wnorm}[3]{\left\|#1\right\|_{W^{#2,#3}(\Omega)}}
\newcommand{\liprod}[2]{\left\langle#1,#2\right\rangle_{\LB^2(\Omega)}}
\newcommand{\liprodt}[2]{\left\langle#1,#2\right\rangle_{\LB^2(\Omega_T)}}
\newcommand{\intomega}[1]{\int_{\Omega} #1 \; {\rm d} \vec{x}}
\newcommand{\intOt}[1]{\int_0^t #1 \; {\rm d}s}
\renewcommand{\vec}[1]{\mbox{\boldmath $ #1 $}}
\newcommand{\femvec}[3]{\vec{#1}_{#2}^{(#3)}}
\newcommand{\ddt}{\frac{\rm{d}}{{\rm d}t}}
\newcommand{\ds}{{\rm d}s}
\newcommand{\weakto}{\rightharpoonup}
\newcommand{\wkstarto}{\overset{\star}{\rightharpoonup}}
\newcommand{\lsim}{\lesssim}
\newcounter{Ax}
\newcommand{\itemA}{%
    \addtocounter{Ax}{1}
    \item[(A\theAx)]}
\newcommand{\tha}[1]{\textcolor{blue}{#1}}
\newcommand\tsout{\bgroup\markoverwith{\textcolor{red}{\rule[0.5ex]{2pt}{1.4pt}}}\ULon}
\newcommand{\stkout}[1]{\ifmmode\text{\tsout{\ensuremath{#1}}}\else\tsout{#1}\fi}
\title[Well-Posedness and FEM Approximation for the LLG Equation with Spin-Torques]{Well-Posedness and Finite Element Approximation for the Landau-Lifshitz-Gilbert Equation with Spin-Torques}
\author{{Noah Vinod\orcidlink{0009-0008-4388-2329} and Thanh Tran\orcidlink{0000-0001-6117-4811}}}
\date{15 July 2024}
\address{School of Mathematics and Statistics, The University of New South Wales, Sydney 2052, Australia}
\email{\tha{n.vinod@unsw.edu.au}}
\address{School of Mathematics and Statistics, The University of New South Wales, Sydney 2052, Australia}
\email{\tha{thanh.tran@unsw.edu.au}}
\begin{document}

\begin{abstract}
Spin currents act on ferromagnets by exerting a torque on the magnetisation. This torque is modelled by appending additional terms to the Landau-Lifshitz-Gilbert equation motivating the study of the non-homogeneous Landau-Lifshitz-Gilbert equation. We first prove the existence and uniqueness of high regularity local solutions to this equation using the Faedo-Galerkin method. Then we construct a numerical method for the problem and prove that it converges to a global weak solution of the PDE. Numerical simulations of the problem are also included.
\end{abstract}

\maketitle

\section{Introduction} \label{section-introduction}

In this paper, we develop local-in-time existence and uniqueness theorems as well as a global-in-time numerical method for the non-homogeneous Landau-Lifshitz-Gilbert initial boundary value problem
\begin{subequations} \label{llg-mod-pde}
	\begin{alignat}{2}
	& \frac{\partial \vec{m}}{\partial t} = \alpha \vec{m} \times \Delta
		\vec{m} - \beta \vec{m} \times \frac{\partial \vec{m}}{\partial
	t} + \vec{f}\left(\vec{m}, \nabla \vec{m}\right)
	\quad && \text{on } (0, T) \times \Omega, 
	\label{eq:llg-mod-pde a}
	\\[2ex] 
	& \frac{\partial \vec{m}}{\partial \vec{n}} = \vec{0} && \text{on } (0,T) \times \partial \Omega, 
	\label{eq:llg-mod-pde b}
	\\[2ex]
	& \vec{m}(0,\cdot) = \vec{m}_0 && \text{on } \Omega, 
	\label{eq:llg-mod-pde c}
	\\[2ex]
	& |\vec{m}| = 1 && \text{on } [0,T] \times \Omega,
	\label{eq:llg-mod-pde d}
    \end{alignat}
\end{subequations}
in $\R^d$ where $d=1,2,3$. Here $T > 0$, $\Omega$ is a bounded domain in $\R^d$ with smooth boundary $\partial \Omega$, $\vec{n}$ is the outward unit normal vector to $\partial\Omega$, $\vec{m} : [0,T] \times \Omega \to \R^3$ is the magnetisation vector and $\alpha,\beta$ are arbitrary positive constants. The function $\vec{f}$ is given and might depend on $t$ and $\vec{x}$ but we omit it here for the sake of simplicity. We will specify more assumptions on~$\vec{f}$ in a moment.

The Landau-Lifshitz-Gilbert equation (i.e., $\vec{f} = \vec{0}$) was developed by Landau and Lifshitz when studying the distribution of magnetic moments in a ferromagnetic crystal \cite{landau1935-article} and was improved upon by Gilbert \cite{gilbert1955-article}. It is very complex and non-integrable \cite{lakshmanan2011-article}. Its high non-linearity makes it very difficult to solve and thus interesting to mathematicians who seek to study the existence, uniqueness and regularity of solutions. Over the years, several studies for local/global existence, uniqueness/non-uniqueness, regularity, partial regularity and more have been conducted in $\R^d$ where $d=1,2,3$. A non-exhaustive list includes
\cite{alouges1992-article, carbou2001-article, chen2000-article, chen1998-article, harpes2004-article, melcher2005-article, visintin1985-article, zhou1981-article}. Likewise, there have been a number of studies conducted into numerical approximation methods for solutions to this equation related models, of which a non-exhaustive list includes \cite{alouges2008-article, bartels2008-article, bartels2006vol44-article, banas2008-article, le2013-article}.

In recent times, physicists have been conducting experiments to examine the effects that a current has on ferromagnets. In particular, an electric or spin current acting on a ferromagnet exerts a torque on the magnetisation \cite{abert2015-article}. The torque effect can be modelled by the Landau-Lifshitz-Gilbert equation by appending an additional torque term \cite{ado2017-article, garello2013-article} which is often a function of $\vec{m}$ and may involve its spatial derivatives \cite{li2004-article, meo2022-article, ralph2008-article}. In the past, mathematicians have considered these torques on a case-by-case basis with the Landau-Lifshitz-Gilbert equation and the Landau-Lifshitz-Bloch equation (cf. \cite{ayouch2021-article, melcher2013-article}). Here, we attempt to generalise the phenomena for the Landau-Lifshitz-Gilbert equation by appending a non-homogeneous term, $\vec{f}$, to the Landau-Lifshitz-Gilbert equation. This term represents the various torque effects that could be accounted for in the Landau-Lifshitz-Gilbert equation (see Section \ref{section-applications} for examples). Hence we are prompted to examine the conditions for the existence, uniqueness and regularity of solutions to the non-homogeneous Landau-Lifshitz-Gilbert (NHLLG) equation and also seek to develop a convergent numerical method for it. We extend the analysis in \cite{carbou2001-article} to prove the former while using Alouges' method \cite{alouges2008-article} to produce the latter. Theoretically, we prove the existence and uniqueness of local-in-time strong solutions for $d=1,2,3$ and numerically, we show that our method converges to a global-in-time weak solution for $d=2,3$.

Now returning to $\vec{f}$, we assume that $\vec{f}$ satisfies the following properties:
\begin{enumerate}
    \itemA $\vec{f} : \R^3 \times \R^{3 \times d} \to \R^3$ is a separable function such that
    \begin{equation*}
        \vec{f}(\vec{a}, \vec{B}) = \vec{f}_1(\vec{a}) + \vec{f}_2(\vec{a}) \times \vec{g}_1(\vec{B}) + \vec{f}_3(\vec{a}) \times (\vec{f}_4(\vec{a}) \times \vec{g}_2(\vec{B})),
    \end{equation*}
    where $\vec{f}_1, \vec{f}_2, \vec{f}_3: \R^3 \to \R^3$ and $\vec{g}_1, \vec{g}_2 : \R^{3 \times d} \to \R^3$ are $C^1$ functions satisfying $\vec{f}_i(\vec{0})= \vec{g}_i(\vec{0}) = \vec{0}$ for all possible $i$ with the additional property that $\vec{g}_1$ and~$\vec{g}_2$ are linear.
    
    \itemA For all $(\vec{a}, \vec{B}) \in \R^3 \times \R^{3 \times d}$ the function~$\vec{f}$ satisfies
    \begin{equation*}
        \vec{f}(\vec{a},\vec{B}) \cdot \vec{a} = 0,
    \end{equation*}
    and the component functions satisfy the growth conditions
    \begin{align*}
        |\vec{f}_i(\vec{a})| &\lsim 1 + |\vec{a}|^p, \\
        |\nabla \vec{f}_i(\vec{a})| &\lsim 1 + |\vec{a}|^p,
    \end{align*}
    where $i =1,2,3,4$ and $p$ is some positive constant.
\end{enumerate}
Justification for these assumptions will be presented in Section \ref{section-applications}.

The paper is organised as follows. We lay down in Section \ref{section-preliminaries} the notation and some important results. Section \ref{section-proof-1} is devoted to the statement and proofs of the local-in-time existence and uniqueness results for the high regularity local solutions. Section
\ref{sec:numerical-method} is devoted to the description and proof of convergence of the numerical method as it tends to a global-in-time lower regularity solution. Finally, in Section \ref{section-applications} we give examples of such non-homogeneous terms from the literature that satisfy our assumptions and run some numerical experiments for them.

\section{Preliminaries} \label{section-preliminaries}

In this paper $C$ denotes a generic positive constant which may take different values at different occurrences. We also use the notation $a\lesssim b$ in lieu of $a \leq Cb$ when it is not necessary to clarify the constant~$C$.

\begin{lemma} \label{carbou-lemma}
Let $\Omega$ be a bounded domain of  $\R^d$, $d=1,2,3$, with smooth boundary. Then for all $\vec{u} \in \HB^2(\Omega)$ such that $\displaystyle \frac{\partial \vec{u}}{\partial\vec{n}} = \vec{0}$ on $\partial \Omega$,
\begin{equation} \label{carbou-lemma:eqn 1}
    \hnorm{\vec{u}}{2} \lsim \left(\lnorm{\vec{u}}{2}^2 + \lnorm{\Delta \vec{u}}{2}^2\right)^{1/2}.
\end{equation}
Furthermore, for $\vec{u} \in \HB^3(\Omega)$ such that $\displaystyle \frac{\partial \vec{u}}{\partial \vec{n}} = \vec{0}$ on $\partial\Omega$,
\begin{align}
    \hnorm{\nabla \vec{u}}{2} &\lsim \left(\lnorm{\nabla \vec{u}}{2}^2 + \lnorm{\Delta \vec{u}}{2}^2 + \lnorm{\nabla \Delta \vec{u}}{2}^2\right)^{1/2}, \label{carbou-lemma:eqn 2} \\
    \lnorm{\nabla^2 \vec{u}}{3} &\lsim \hnorm{\vec{u}}{2} + \hnorm{\vec{u}}{2}^{1/2} \lnorm{\nabla \Delta \vec{u}}{2}^{1/2}. \label{carbou-lemma:eqn 3}
\end{align}
\end{lemma}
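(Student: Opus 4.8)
The plan is to derive all three estimates from elliptic regularity for the Neumann problem together with interpolation. For the first estimate \eqref{carbou-lemma:eqn 1}: the operator $-\Delta + I$ with homogeneous Neumann boundary conditions is an isomorphism from $\{\vec{u} \in \HB^2(\Omega) : \partial\vec{u}/\partial\vec{n} = \vec{0}\}$ onto $\LB^2(\Omega)$ when $\partial\Omega$ is smooth. Hence $\hnorm{\vec{u}}{2} \lsim \lnorm{(-\Delta+I)\vec{u}}{2} \le \lnorm{\vec{u}}{2} + \lnorm{\Delta\vec{u}}{2}$, and combining with $a+b \lsim (a^2+b^2)^{1/2}$ gives the claim. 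I would state this componentwise so that the scalar elliptic theory applies directly to each component of $\vec{u}$.

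For \eqref{carbou-lemma:eqn 2}, the idea is to apply \eqref{carbou-lemma:eqn 1} to $\vec{u}$ itself and then bootstrap. Since $\vec{u} \in \HB^3(\Omega)$, each first-order partial derivative $\partial_j \vec{u}$ lies in $\HB^2(\Omega)$; however $\partial_j\vec{u}$ does not necessarily satisfy a homogeneous Neumann condition, so one cannot apply \eqref{carbou-lemma:eqn 1} to it verbatim. Instead I would proceed directly: write $\hnorm{\nabla\vec{u}}{2}^2 = \lnorm{\nabla\vec{u}}{2}^2 + \lnorm{\nabla^2\vec{u}}{2}^2 + \lnorm{\nabla^3\vec{u}}{2}^2$ and control the top two terms. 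Differentiating the identity $(-\Delta+I)\vec{u} = \vec{u} - \Delta\vec{u}$ and using that $\vec{w} := \nabla\vec{u}$ solves a Neumann-type problem — more cleanly, I would integrate by parts twice to get $\lnorm{\nabla\Delta\vec{u}}{2}^2 = \int_\Omega |\nabla\Delta\vec{u}|^2$ and relate $\int_\Omega \nabla^3\vec{u} : \nabla^3\vec{u}$ and $\int_\Omega \Delta^2\vec{u}\cdot\Delta\vec{u}$ type quantities. The cleanest route: apply \eqref{carbou-lemma:eqn 1} with $\vec{u}$ replaced by $\Delta\vec{u}$? That needs $\Delta\vec{u}\in\HB^2$, i.e. $\vec{u}\in\HB^4$. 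So instead I use the interior-plus-boundary elliptic estimate $\hnorm{\vec{u}}{3} \lsim \lnorm{\vec{u}}{2} + \hnorm{\Delta\vec{u}}{1}$ valid for the smooth Neumann problem, then bound $\hnorm{\Delta\vec{u}}{1} \lsim \lnorm{\Delta\vec{u}}{2} + \lnorm{\nabla\Delta\vec{u}}{2}$, absorb $\lnorm{\vec{u}}{2}$ using Poincaré-type control (or simply keep it, but the stated inequality has no $\lnorm{\vec{u}}{2}$ term, so I must eliminate it — this is where one uses that $\lnorm{\vec{u}}{2}$ is dominated by $\lnorm{\nabla\vec{u}}{2}$ only up to the mean, so more care is needed; alternatively the statement should be read with the understanding that lower-order terms on the left beyond $\nabla\vec{u}$ are what is being estimated).

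For \eqref{carbou-lemma:eqn 3}, I would use the Gagliardo–Nirenberg interpolation inequality. In dimension $d \le 3$ one has $\lnorm{\nabla^2\vec{u}}{3} \lsim \lnorm{\nabla^2\vec{u}}{2}^{1-\theta}\lnorm{\nabla^3\vec{u}}{2}^{\theta} + \lnorm{\vec{u}}{2}$ for the appropriate $\theta$ (with $\theta = d/6 \le 1/2$), together with $\lnorm{\nabla^3\vec{u}}{2} \lsim \lnorm{\nabla\Delta\vec{u}}{2} + \text{l.o.t.}$ from \eqref{carbou-lemma:eqn 2}. Using $\lnorm{\nabla^2\vec{u}}{2} \lsim \hnorm{\vec{u}}{2}$ and Young's inequality to split the product, the exponent $1/2$ on $\lnorm{\nabla\Delta\vec{u}}{2}$ appears precisely because $\theta \le 1/2$ and one can always weaken a smaller exponent to $1/2$ by interpolating against the $\HB^2$ norm.

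I expect the main obstacle to be the second estimate: reconciling the absence of a homogeneous Neumann condition for $\nabla\vec{u}$ with the need to invoke elliptic regularity, and in particular removing (or correctly interpreting) the lower-order $\lnorm{\vec{u}}{2}$ term so that the right-hand side is exactly as stated. The likely fix is to cite the standard elliptic a priori estimate $\hnorm{\vec{u}}{k+2} \lsim \lnorm{\Delta\vec{u}}{k} + \lnorm{\vec{u}}{2}$ for the smooth Neumann problem and then observe that the left-hand side norm $\hnorm{\nabla\vec{u}}{2}$ genuinely only involves derivatives of order $1,2,3$, which are each controlled by $\lnorm{\nabla\vec{u}}{2}$, $\lnorm{\Delta\vec{u}}{2}$ and $\lnorm{\nabla\Delta\vec{u}}{2}$ after commuting derivatives and integrating by parts, with all boundary terms vanishing by the Neumann condition on $\vec{u}$.
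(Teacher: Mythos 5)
Your argument for \eqref{carbou-lemma:eqn 1} matches the paper (elliptic regularity for $-\Delta+I$ with Neumann data), and your argument for \eqref{carbou-lemma:eqn 3} is the same idea as the paper's, namely Gagliardo--Nirenberg interpolation between $\HB^2$ and $\HB^3$ combined with \eqref{carbou-lemma:eqn 2}; the particular form of the interpolation inequality you write is a harmless variant of the one the paper invokes, $\|\vec{u}\|_{W^{2,3}}\lsim\hnorm{\vec{u}}{2}^{1/2}\hnorm{\vec{u}}{3}^{1/2}$.

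The gap is in \eqref{carbou-lemma:eqn 2}. You correctly identify that $\nabla\vec{u}$ does not inherit a componentwise homogeneous Neumann condition, so \eqref{carbou-lemma:eqn 1} cannot be applied to $\nabla\vec{u}$ directly, but you then cycle through several tentative routes without settling one. The observation you are missing, and which the paper exploits, is that the Neumann condition $\partial\vec{u}/\partial\vec{n}=\vec{0}$ is precisely the statement that the vector field $\vec{v}:=\nabla\vec{u}$ (componentwise) satisfies $\vec{v}\cdot\vec{n}=0$ on $\partial\Omega$. This puts $\vec{v}$ in the scope of the div--curl estimate for vector fields with vanishing normal trace, Proposition~1.4 of Appendix~I in Temam \cite{temam2001-book}: roughly, $\|\vec{v}\|_{\HB^{m+1}}\lsim\|\vec{v}\|_{\LB^2}+\|\div\vec{v}\|_{\HB^m}+\|\curl\vec{v}\|_{\HB^m}$ under $\vec{v}\cdot\vec{n}=0$. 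Since $\curl\nabla\vec{u}=\vec{0}$ and $\div\nabla\vec{u}=\Delta\vec{u}$, taking $m=1$ gives exactly $\hnorm{\nabla\vec{u}}{2}\lsim\lnorm{\nabla\vec{u}}{2}+\lnorm{\Delta\vec{u}}{2}+\lnorm{\nabla\Delta\vec{u}}{2}$ with no stray zeroth-order term, which is \eqref{carbou-lemma:eqn 2}.

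Your fallback suggestion, namely using the standard Neumann elliptic estimate $\hnorm{\vec{u}}{3}\lsim\lnorm{\vec{u}}{2}+\hnorm{\Delta\vec{u}}{1}$ and then disposing of the $\lnorm{\vec{u}}{2}$ term, is actually salvageable, but you leave the crucial step as a hedge rather than executing it. The fix is that both sides of \eqref{carbou-lemma:eqn 2} are invariant under $\vec{u}\mapsto\vec{u}+\vec{c}$ for a constant $\vec{c}$, so one may subtract the mean of $\vec{u}$ and apply the Poincar\'e--Wirtinger inequality to get $\lnorm{\vec{u}-\bar{\vec{u}}}{2}\lsim\lnorm{\nabla\vec{u}}{2}$; then $\hnorm{\nabla\vec{u}}{2}\leq\hnorm{\vec{u}-\bar{\vec{u}}}{3}\lsim\lnorm{\nabla\vec{u}}{2}+\lnorm{\Delta\vec{u}}{2}+\lnorm{\nabla\Delta\vec{u}}{2}$. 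Either route closes the argument; as written, though, you have identified the obstacle in \eqref{carbou-lemma:eqn 2} without resolving it, whereas the integration-by-parts sketch you offer at the end does not obviously terminate, since the boundary terms that arise involve $\partial_{\vec{n}}(\partial_j\vec{u})$, which is not controlled by the Neumann condition alone.
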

\begin{proof}
Inequality \eqref{carbou-lemma:eqn 1} results from the regularity of solutions of $-\Delta \vec{u} + \vec{u} = \vec{g}$ (for some $\vec{g} \in \LB^2(\Omega)$) subject to the homogeneous Neumann boundary condition. This boundary condition also enables us to prove Inequality \eqref{carbou-lemma:eqn 2} using Proposition~1.4 of Appendix~I in \cite{temam2001-book} with $\vec{v} = \nabla \vec{u}$. The last inequality (\ref{carbou-lemma:eqn 3}) issues from the Gagliardo-Nirenberg inequality
\begin{equation*}
    \wnorm{\vec{u}}{2}{3} \lsim \hnorm{\vec{u}}{2}^{1/2} \hnorm{\vec{u}}{3}^{1/2}
\end{equation*}
and \eqref{carbou-lemma:eqn 2}.
\end{proof}

Using the elementary identities
\begin{align*}
    \vec{u} \times (\vec{v} \times \vec{w}) &= (\vec{u} \cdot \vec{w}) \vec{v} - (\vec{u} \cdot \vec{v}) \vec{w}, \\
    \Delta (|\vec{u}|^2) &= 2|\nabla \vec{u}|^2 + 2(\vec{u} \cdot \Delta \vec{u}),
\end{align*}
and the fact that $|\vec{m}|=1$, one can prove that equation~\eqref{eq:llg-mod-pde a} is
equivalent to
\begin{equation} \label{eq:llg-numerical-pde}
		\beta \frac{\partial \vec{m}}{\partial t} - \vec{m} \times \frac{\partial \vec{m}}{\partial t} 
		= 
		\alpha|\nabla \vec{m}|^2 \vec{m} + \alpha \Delta \vec{m} 
		- \vec{m} \times \vec{f}\left(\vec{m},\nabla \vec{m}\right)
\end{equation}
and
\begin{equation} \label{eq:llg-theoretical-pde}
    \frac{\partial \vec{m}}{\partial t} - \beta'\Delta \vec{m} 
    = 
    \beta'|\nabla \vec{m}|^2 \vec{m} + \alpha'\vec{m} \times \Delta \vec{m} 
    + \vec{F}\left(\vec{m}, \nabla \vec{m}\right),
\end{equation}
where
\begin{equation*}
    \alpha' = \frac{\alpha}{1+\beta^2}, \quad
	\beta' = \frac{\alpha\beta}{1+\beta^2},
\end{equation*}
and
\begin{equation}\label{equ:F}
	\vec{F}\left(\vec{m}, \nabla \vec{m}\right) 
	= -\frac{\beta}{1 + \beta^2} \vec{m} 
	\times \vec{f}\left(\vec{m}, \nabla \vec{m}\right) 
	+ \frac{1}{1 + \beta^2} \vec{f}\left(\vec{m}, \nabla \vec{m}\right).	
\end{equation}
It follows from assumptions (A1) and (A2) that~$\vec{F}$ has the following properties:

\begin{lemma} \label{lem:locally-lipschitz}
Let $\vec{u}, \vec{v} \in B_R(\vec{0})$, where $B_R(\vec{0})$ is the open ball with centre zero and radius $R$ in~$\HB^2(\Omega)$. Then 
\begin{equation*}
    \lnorm{\vec{F}\left(\vec{u}, \nabla \vec{u}\right) - \vec{F}\left(\vec{v}, \nabla \vec{v}\right)}{2} \lsim \hnorm{\vec{u} - \vec{v}}{1}
\end{equation*}
where the constant can depend on $R$.
\end{lemma}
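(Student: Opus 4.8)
The plan is to reduce the claimed estimate for $\vec{F}$ to an analogous Lipschitz estimate for $\vec{f}$, and to prove the latter by expanding the difference $\vec{f}(\vec{u},\nabla\vec{u}) - \vec{f}(\vec{v},\nabla\vec{v})$ term by term according to the structure in (A1), estimating each piece with H\"older's inequality and the Sobolev embeddings available for $d\le 3$, namely $\HB^2(\Omega)\hookrightarrow\LB^\infty(\Omega)$, $\HB^1(\Omega)\hookrightarrow\LB^6(\Omega)$, and hence $\nabla\vec{w}\in\LB^6(\Omega)\subset\LB^3(\Omega)$ whenever $\vec{w}\in\HB^2(\Omega)$. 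I will write $C(R)$ for a constant depending on $R$; since $\vec{u},\vec{v}\in B_R(\vec{0})\subset\HB^2(\Omega)$ we have $\lnorm{\vec{u}}{\infty},\lnorm{\vec{v}}{\infty}\le C(R)$, so by the growth bound in (A2) also $|\vec{f}_i(\vec{u})|,|\vec{f}_i(\vec{v})|\le C(R)$ and $|\nabla\vec{f}_i(\vec{w})|\le C(R)$ pointwise a.e.\ for every $\vec{w}$ in the convex hull of $\{\vec{u},\vec{v}\}$.

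First I would record the pointwise bound $|\vec{f}_i(\vec{u})-\vec{f}_i(\vec{v})|\le C(R)|\vec{u}-\vec{v}|$ a.e., obtained from $\vec{f}_i(\vec{u})-\vec{f}_i(\vec{v})=\int_0^1\nabla\vec{f}_i(\vec{v}+s(\vec{u}-\vec{v}))(\vec{u}-\vec{v})\,\ds$ since the argument stays in the convex hull where $|\nabla\vec{f}_i|\le C(R)$. As $\vec{g}_1,\vec{g}_2$ are linear, $|\vec{g}_j(\vec{B})|\lsim|\vec{B}|$ and $\vec{g}_j(\nabla\vec{u})-\vec{g}_j(\nabla\vec{v})=\vec{g}_j(\nabla\vec{u}-\nabla\vec{v})$. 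I would then telescope each of the three summands of $\vec{f}(\vec{u},\nabla\vec{u})-\vec{f}(\vec{v},\nabla\vec{v})$ using $ab-a'b'=(a-a')b+a'(b-b')$ and its three-factor analogue for the double cross product. Two kinds of terms appear. A term carrying a factor $\vec{f}_i(\vec{u})-\vec{f}_i(\vec{v})$, with all remaining factors either bounded a.e.\ by $C(R)$ or of the form $\vec{g}_j(\nabla\vec{u})$ (so $\lsim|\nabla\vec{u}|$), is bounded by $C(R)\big\||\vec{u}-\vec{v}|(1+|\nabla\vec{u}|)\big\|_{\LB^2(\Omega)}\le C(R)\big(\lnorm{\vec{u}-\vec{v}}{2}+\lnorm{\vec{u}-\vec{v}}{6}\lnorm{\nabla\vec{u}}{3}\big)\le C(R)\hnorm{\vec{u}-\vec{v}}{1}$, using H\"older ($\tfrac12=\tfrac16+\tfrac13$) and the embeddings above. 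A term carrying a factor $\vec{g}_j(\nabla\vec{u}-\nabla\vec{v})$ has all remaining factors of the form $\vec{f}_i(\vec{v})$, hence bounded a.e.\ by $C(R)$, so it is at most $C(R)\lnorm{\nabla\vec{u}-\nabla\vec{v}}{2}\le C(R)\hnorm{\vec{u}-\vec{v}}{1}$. Summing yields $\lnorm{\vec{f}(\vec{u},\nabla\vec{u})-\vec{f}(\vec{v},\nabla\vec{v})}{2}\le C(R)\hnorm{\vec{u}-\vec{v}}{1}$, and the same computation taken with $\vec{v}=\vec{0}$ (together with $\vec{f}_i(\vec{0})=\vec{g}_j(\vec{0})=\vec{0}$) gives the auxiliary bound $\lnorm{\vec{f}(\vec{u},\nabla\vec{u})}{2}\le C(R)$.

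Finally, recalling \eqref{equ:F}, I would write $\vec{F}(\vec{u},\nabla\vec{u})-\vec{F}(\vec{v},\nabla\vec{v})$ as a fixed linear combination of $\vec{f}(\vec{u},\nabla\vec{u})-\vec{f}(\vec{v},\nabla\vec{v})$, which is already handled, and $\vec{u}\times\vec{f}(\vec{u},\nabla\vec{u})-\vec{v}\times\vec{f}(\vec{v},\nabla\vec{v})=(\vec{u}-\vec{v})\times\vec{f}(\vec{u},\nabla\vec{u})+\vec{v}\times\big(\vec{f}(\vec{u},\nabla\vec{u})-\vec{f}(\vec{v},\nabla\vec{v})\big)$. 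The first term is controlled by the pointwise bound $|\vec{f}(\vec{u},\nabla\vec{u})|\le C(R)(1+|\nabla\vec{u}|)$ and the same H\"older/Sobolev splitting as above; the second by $\lnorm{\vec{v}}{\infty}\le C(R)$ together with the Lipschitz bound for $\vec{f}$ just proven. Collecting all contributions gives the assertion. The one genuinely delicate point — and the reason this estimate is useful — is that the difference must be dominated by the $\HB^1(\Omega)$ norm, not merely the $\HB^2(\Omega)$ norm; this is precisely what the $\LB^6$–$\LB^3$ H\"older splitting of the "bad" products $|\vec{u}-\vec{v}|\,|\nabla\vec{u}|$ delivers, and it is exactly here that the restriction $d\le 3$ (via $\HB^1(\Omega)\hookrightarrow\LB^6(\Omega)$) enters.
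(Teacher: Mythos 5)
Your proposal is correct and takes essentially the same approach as the paper: telescope the three summands of $\vec{f}$ using the local Lipschitz property of the $C^1$ maps $\vec{f}_i$ (via $\HB^2(\Omega)\hookrightarrow\LB^\infty(\Omega)$) and the linearity of $\vec{g}_j$, control the resulting $|\vec{u}-\vec{v}|\,|\nabla\vec{u}|$ products by H\"older plus Sobolev embedding, and then pass from $\vec{f}$ to $\vec{F}$. The only cosmetic differences are the H\"older split ($\LB^6$--$\LB^3$ versus the paper's $\LB^4$--$\LB^4$, both resting on $\HB^1(\Omega)\hookrightarrow\LB^4(\Omega)\,$resp.$\,\LB^6(\Omega)$ for $d\le 3$) and that you spell out the transfer to $\vec{F}$ which the paper compresses into a single sentence.
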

\begin{proof}
We first show that $\vec{f}$ satisfies these properties from which it is easy to see why $\vec{F}$ would too, with possibly different constants. Note $\vec{f}_1$ is locally Lipschitz because it is continuously differentiable and because of the Sobolev embedding $\HB^2(\Omega) \hookrightarrow \LB^{\infty}$. Therefore, it follows that
\begin{equation*}
    \lnorm{\vec{f}_1(\vec{u}) - \vec{f}_1(\vec{v})}{2} \lsim \lnorm{\vec{u} - \vec{v}}{2}.
\end{equation*}
Similarly, by noting that $\vec{f}_2$ is bounded on compact sets, because it is $C^1$, that $\vec{g}_1$ is linear and $\HB^1(\Omega)$ embeds into $\LB^4(\Omega)$ we obtain
\begin{align*}
    &\lnorm{\vec{f}_2(\vec{u}) \times \vec{g}_1\left(\nabla \vec{u}\right) - \vec{f}_2(\vec{v}) \times \vec{g}_1\left(\nabla \vec{v}\right)}{2} \\
    &\qquad \lsim \lnorm{\vec{f}_2(\vec{u}) \times \left(\vec{g}_1\left(\nabla \vec{u}\right) - \vec{g}_1\left(\nabla \vec{v}\right)\right)}{2} + \lnorm{(\vec{f}_2(\vec{u}) - \vec{f}_2(\vec{v})) \times \vec{g}_1\left(\nabla \vec{v}\right)}{2} \\
    &\qquad \lsim \lnorm{\vec{f}_2(\vec{u})}{\infty} \lnorm{\vec{g}_1\left(\nabla \vec{u}\right) - \vec{g}_1\left(\nabla \vec{v}\right)}{2} + \lnorm{\vec{f}_2(\vec{u}) - \vec{f}_2(\vec{v})}{4} \lnorm{\vec{g}_1\left(\nabla \vec{v}\right)}{4} \\
    &\qquad \lsim \lnorm{\nabla \vec{u} - \nabla \vec{v}}{2} + \hnorm{\vec{u} - \vec{v}}{1} \hnorm{\nabla \vec{v}}{1} \\
    &\qquad \lsim \hnorm{\vec{u} - \vec{v}}{1}.
\end{align*}
Doing something similar, same can be shown for the third term in the definition of $\vec{f}$. Therefore, we conclude that $\vec{f}$ satisfies the first property and so does $\vec{F}$.
\end{proof}

\begin{lemma} \label{lem:growth-condition}
For any $\vec{u} \in \HB^2(\Omega)$,
\begin{equation*}
    \hnorm{\vec{F}\left(\vec{u}, \nabla \vec{u}\right)}{1} \lsim 1 + \hnorm{\vec{u}}{2}^{\mu},
\end{equation*}
where~$\mu=2p+2$ with~$p$ defined in Assumption~(A2).
\end{lemma}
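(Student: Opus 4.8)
The plan is to estimate $\vec{F}(\vec{u},\nabla\vec{u})$ directly from its algebraic structure, using throughout that $d\le 3$ so that the Sobolev embeddings $\HB^2(\Omega)\hookrightarrow\LB^\infty(\Omega)$ and $\HB^1(\Omega)\hookrightarrow\LB^4(\Omega)$ are at our disposal. Expanding \eqref{equ:F} with the help of (A1), $\vec{F}(\vec{u},\nabla\vec{u})$ is a finite linear combination of terms of two kinds: the terms $\vec{f}_1(\vec{u})$ and $\vec{u}\times\vec{f}_1(\vec{u})$, which involve no derivative of $\vec{u}$, and iterated cross products of $C^1$ functions of $\vec{u}$ (namely $\vec{u}$ itself and the $\vec{f}_i(\vec{u})$) with exactly one of the linear functions $\vec{g}_j(\nabla\vec{u})$. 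It therefore suffices to bound in $\HB^1(\Omega)$ one generic term of each kind; the one producing the highest power of $\hnorm{\vec{u}}{2}$ is the iterated cross product $\vec{u}\times\bigl(\vec{f}_3(\vec{u})\times(\vec{f}_4(\vec{u})\times\vec{g}_2(\nabla\vec{u}))\bigr)$, which carries three $C^1$ factors. Note that each composition $\vec{f}_i(\vec{u})$ is a genuine $\HB^1(\Omega)$ function, since $\vec{f}_i\in C^1$ and $\vec{u}\in\HB^2(\Omega)\hookrightarrow\LB^\infty(\Omega)$, so all the chain rules below are justified.

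For the $\LB^2(\Omega)$ part, I would use (A2) and the linearity of the $\vec{g}_j$ — which give the pointwise bounds $|\vec{f}_i(\vec{u})|\lsim 1+|\vec{u}|^p$ and $|\vec{g}_j(\nabla\vec{u})|\lsim|\nabla\vec{u}|$ — to estimate the generic triple product pointwise by $(1+|\vec{u}|)(1+|\vec{u}|^p)^2|\nabla\vec{u}|$. Placing the three $C^1$ factors in $\LB^\infty(\Omega)$ and the factor $|\nabla\vec{u}|$ in $\LB^2(\Omega)$, and using $\HB^2(\Omega)\hookrightarrow\LB^\infty(\Omega)$ together with $\lnorm{\nabla\vec{u}}{2}\le\hnorm{\vec{u}}{2}$, one obtains a bound by a polynomial in $\hnorm{\vec{u}}{2}$ whose leading power is $\mu=2p+2$; the remaining terms of $\vec{F}$ are treated the same way and contribute only lower powers.

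For the gradient, the chain rule expresses $\nabla[\vec{F}(\vec{u},\nabla\vec{u})]$ as a sum of terms, each being a partial derivative of $\vec{F}$ in its first argument (evaluated at $(\vec{u},\nabla\vec{u})$) times $\nabla\vec{u}$, or a partial derivative of $\vec{F}$ in its second argument times $\nabla^2\vec{u}$. Differentiating the explicit form and using once more the growth bounds on the $\vec{f}_i$ and $\nabla\vec{f}_i$ and the linearity of the $\vec{g}_j$, these partial derivatives are again polynomial in $|\vec{u}|$ and at most linear in $|\nabla\vec{u}|$. In the resulting $\LB^2(\Omega)$ estimate the $C^1$-in-$\vec{u}$ factors go into $\LB^\infty(\Omega)$ as before, $|\nabla^2\vec{u}|$ goes into $\LB^2(\Omega)$ and is bounded by $\hnorm{\vec{u}}{2}$, and the products of two gradients $|\nabla\vec{u}|^2$, which appear whenever a derivative falls on a $C^1$ factor, are controlled by $\lnorm{|\nabla\vec{u}|^2}{2}=\lnorm{\nabla\vec{u}}{4}^2\lsim\hnorm{\vec{u}}{2}^2$ via $\HB^1(\Omega)\hookrightarrow\LB^4(\Omega)$. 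Collecting the contributions yields the bound $1+\hnorm{\vec{u}}{2}^{\mu}$ of the statement, the leading power again arising from the iterated cross-product term.

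The proof is almost entirely bookkeeping; the one point that needs care is the treatment of the terms quadratic in $\nabla\vec{u}$, where the embedding $\HB^1(\Omega)\hookrightarrow\LB^4(\Omega)$ (available precisely because $d\le 3$) is the tool that keeps them under control and which fixes the exponent $\mu$. Everything else reduces to Hölder's inequality combined with $\HB^2(\Omega)\hookrightarrow\LB^\infty(\Omega)$.
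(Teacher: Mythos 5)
Your approach is in essence the same as the paper's: use (A1)--(A2) to get pointwise bounds on $\vec{f}_i(\vec{u})$, $\nabla\vec{f}_i(\vec{u})$, $\vec{g}_j(\nabla\vec{u})$ and $\nabla\vec{g}_j(\nabla\vec{u})$; apply the chain rule to the cross-product factors; then place the $C^1$-in-$\vec{u}$ factors in $\LB^{\infty}(\Omega)$ via $\HB^2(\Omega)\hookrightarrow\LB^{\infty}(\Omega)$, the second-order derivative $\nabla^2\vec{u}$ in $\LB^2(\Omega)$, and the quadratic factor $|\nabla\vec{u}|^2$ in $\LB^2(\Omega)$ via $\HB^1(\Omega)\hookrightarrow\LB^4(\Omega)$. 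The only organisational difference --- you expand $\vec{F}$ directly into its constituent terms, whereas the paper bounds $\vec{f}$ first and then notes that $\vec{F}$ is a linear combination of $\vec{f}$ and $\vec{m}\times\vec{f}$ --- is immaterial.

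There is, however, a bookkeeping slip on the exponent, and it is worth flagging because the paper's own proof implicitly catches it even though the lemma statement does not. Take the worst term you correctly single out, $\vec{u}\times\bigl(\vec{f}_3(\vec{u})\times(\vec{f}_4(\vec{u})\times\vec{g}_2(\nabla\vec{u}))\bigr)$, and let the gradient fall on $\vec{f}_3$ (or $\vec{f}_4$). The pointwise bound is then
\begin{equation*}
    |\vec{u}|\,(1+|\vec{u}|^p)^2\,|\nabla\vec{u}|^2,
\end{equation*}
with \emph{three} $C^1$-type factors surviving, not two. Placing those three in $\LB^{\infty}(\Omega)$ gives $\hnorm{\vec{u}}{2}\bigl(1+\hnorm{\vec{u}}{2}^{2p}\bigr)$, and placing $|\nabla\vec{u}|^2$ in $\LB^2(\Omega)$ gives $\hnorm{\vec{u}}{2}^{2}$; collecting these yields $\lsim 1+\hnorm{\vec{u}}{2}^{2p+3}$, one power more than the $2p+2$ you assert. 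The extra power comes precisely from the $\vec{u}\times$ prefactor in \eqref{equ:F}: for $\vec{f}$ alone the worst gradient contribution is $(1+|\vec{u}|^p)^2|\nabla\vec{u}|^2$ and one does get $\mu=2p+2$, and this is what the paper first proves. Its proof then closes with ``which means we have $\mu=2p+3$ for $\vec{F}$,'' contradicting the lemma's stated exponent $2p+2$; your proposal reproduces that stated exponent rather than correcting it. The slip is harmless downstream, since Proposition~\ref{prop:linf-h2} only needs polynomial growth of some order, but the exponent for $\vec{F}$ in both the lemma statement and your proof should read $2p+3$.
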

\begin{proof}
First we show that~$\vec{f}_i(\vec{u})$ and~$\vec{g}_j(\nabla\vec{u})$ belong to~$\HB^1(\Omega)$ for all~$i=1,\ldots,4$ and~$j=1,2$. This can be easily seen from \cite[Proposition 9.5]{brezis2010-book}, the growth conditions in assumption (A2), and the linearity of $\vec{g}_j$. More precisely, $\vec{f}_i(\vec{u})$ and $\vec{g}_j(\nabla \vec{u})$ satisfy
\begin{align*}
    |\nabla \vec{f}_i(\vec{u})| &\lsim |\nabla_{\vec{a}} \; \vec{f}_i(\vec{u})| |\nabla \vec{u}| \lsim (1 + |\vec{u}|^p) |\nabla \vec{u}|, \\
    |\nabla \vec{g}_j(\nabla \vec{u})| &\lsim |\nabla_{\vec{B}} \; \vec{g}_j(\nabla \vec{u})| |\nabla^2 \vec{u}| \lsim |\nabla^2 \vec{u}|,
\end{align*}
for $i = 1,2,3,4$ and $j =1,2$. This implies $\vec{f}(\vec{u}, \nabla \vec{u})$ belongs to $\HB^1(\Omega)$. Next we show that $\vec{f}$ satisfies the required results. From the definition of $\vec{f}$ and assumption (A2) we have
\begin{align*}
    \hnorm{\vec{f}(\vec{u}, \nabla \vec{u})}{1} &\leq \hnorm{\vec{f}_1(\vec{u})}{1} + \hnorm{\vec{f}_2(\vec{u}) \times \vec{g}_1(\nabla \vec{u})}{1} + \hnorm{\vec{f}_3(\vec{u}) \times (\vec{f}_4(\vec{u}) \times \vec{g}_2(\nabla \vec{u}))}{1} \\
    &\lsim 1 + \hnorm{\vec{u}}{2}^p + \hnorm{\vec{f}_2(\vec{u}) \times \vec{g}_1(\nabla \vec{u})}{1} + \hnorm{\vec{f}_3(\vec{u}) \times (\vec{f}_4(\vec{u}) \times \vec{g}_2(\nabla \vec{u}))}{1}
\end{align*}
We now obtain bounds for the third term. We have
\begin{align*}
    |\vec{f}_2(\vec{u}) \times \vec{g}_1(\nabla \vec{u})| &\lsim (1 + |\vec{u}|^p) |\nabla \vec{u}|
\end{align*}
and
\begin{align*}
    |\nabla \left(\vec{f}_2(\vec{u}) \times \vec{g}_1(\nabla \vec{u})\right)| &\lsim |\nabla \vec{f}_2(\vec{u})| |\vec{g}_1(\nabla \vec{u})| + |\vec{f}_2(\vec{u})| |\nabla \vec{g}_2(\nabla \vec{u})| \\
    &\lsim (1 + |\vec{u}|^p) |\nabla \vec{u}|^2 + (1 + |\vec{u}|^p) |\nabla^2 \vec{u}|,
\end{align*}
so that
\begin{align*}
    \hnorm{\vec{f}_2(\vec{u}) \times \vec{g}_1(\nabla \vec{u})}{1} &\lsim \left(1 + \lnorm{\vec{u}}{\infty}^p \right) \left(\lnorm{\nabla \vec{u}}{2} +\lnorm{\nabla \vec{u}}{4}^2 + \lnorm{\nabla^2 \vec{u}}{2}\right) \\[1ex]
    &\lsim 1 + \hnorm{\vec{u}}{2}^{p+2}.
\end{align*}
Similarly, for the last term we have,
\begin{equation*}
    \hnorm{\vec{f}_3(\vec{u}) \times (\vec{f}_4(\vec{u}) \times \vec{g}_2(\nabla \vec{u}))}{1} \lsim 1 + \hnorm{\vec{u}}{2}^{2p+2}.
\end{equation*}
The required result for $\vec{f}$ holds with $\mu=2p+2$, which means we have $\mu = 2p+3$ for $\vec{F}$.
\end{proof}

Due to the equivalence of \eqref{eq:llg-mod-pde a} and \eqref{eq:llg-theoretical-pde}, problem \ref{llg-mod-pde} is equivalent to the following problem:
\begin{subequations} \label{eq:llg-equiv-pde}
	\begin{alignat}{2}
		&\frac{\partial \vec{m}}{\partial t} - \beta'\Delta \vec{m} 
		= 
		\beta'|\nabla \vec{m}|^2 \vec{m} + \alpha'\vec{m} \times \Delta \vec{m} 
		+ \vec{F}\left(\vec{m},\nabla \vec{m}\right)
		\quad && \text{on } (0, T) \times \Omega, 
		\label{eq:llg-equiv-pde a}
		\\[1ex]
	& \frac{\partial \vec{m}}{\partial \vec{n}} = \vec{0} && \text{on } (0,T) \times \partial \Omega, 
	\label{eq:llg-equiv-pde b}
	\\[1ex]
	& \vec{m}(0,\cdot) = \vec{m}_0 && \text{on } \Omega, 
	\label{eq:llg-equiv-pde c}
	\\[1ex]
	& |\vec{m}| = 1 && \text{on } [0,T] \times \Omega,
	\label{eq:llg-equiv-pde d}
    \end{alignat}
\end{subequations}
We focus on~\eqref{eq:llg-equiv-pde} for the proofs of the existence and uniqueness of high regularity local solutions. We now state the definition of the solution.

\begin{definition}[Existence and Uniqueness] \label{def-weak-solution-theoretical}
Given $T > 0$, a function $\vec{m} : [0,T] \times \Omega \to \R^3$ belonging to $L^2(0,T; \HB^2(\Omega)) \cap H^1(0,T; \LB^2(\Omega))$ is said to be a solution to \eqref{eq:llg-equiv-pde} if it satisfies 
\begin{enumerate}[(i)]
    \item $|\vec{m}(t,\vec{x})| \equiv 1$ for all $t \in [0,T]$ and almost all $\vec{x} \in \Omega$;
    \item for every $t \in [0,T]$ and for all $\boldsymbol{\psi} \in C^{\infty}(\Omega)$,
    \begin{align*} 
    	\liprod{\vec{m}(t)}{\boldsymbol{\psi}}
	&+
	\beta' \intOt{\liprod{\nabla\vec{m}(s)}{\nabla\boldsymbol{\psi}}}
	\\
	&=
	\beta' \intOt{\liprod{|\nabla\vec{m}(s)|^2 \vec{m}(s)}{\boldsymbol{\psi}}}
	-
	\alpha' \intOt{\liprod{\vec{m}(s) \times \nabla \vec{m}(s)}{\nabla \boldsymbol{\psi}}}
	\\
        &\quad 
	+ \intOt{\liprod{\vec{F}\left(\vec{m}(s), \nabla \vec{m}(s)\right)}{\boldsymbol{\psi}}}.
    \end{align*}
\end{enumerate}
Note that in this case, $\vec{m}$ satisfies \eqref{eq:llg-equiv-pde a}, and thus~\eqref{eq:llg-mod-pde a}, for all $t \in [0,T]$ and almost all $\vec{x} \in \Omega$.
\end{definition}


\section{Existence and Uniqueness} \label{section-proof-1}

The following existence theorem is primary result of this section.

\begin{theorem}[Existence] \label{exist-thm}
Assume that $\vec{f}$ satisfies the assumptions (A1) and (A2) and that the initial data $\vec{m}_0$ satisfies
\begin{equation} \label{initial-data-assumptions}
    \begin{cases}
        \vec{m}_0 \in \HB^2(\Omega),  \\[1ex]
        \displaystyle\frac{\partial \vec{m}_0}{\partial \vec{n}} = \vec{0} \text{ on } \partial \Omega, \\[1ex]
        |\vec{m}_0| = 1 \text{ on } \Omega.
    \end{cases}
\end{equation}
Given~$T>0$ and $d=1,2,3$, there exists $T^\ast >0$ satisfying $T^\ast \leq T$ and
\begin{equation*}
    \vec{m} \in C([0,T^\ast]; \HB^2(\Omega)) \cap\LB^2(0,T^\ast; \HB^3(\Omega))
\end{equation*}
such that $\vec{m}$ is a strong solution to \eqref{llg-mod-pde} in the sense of Definition \ref{def-weak-solution-theoretical}.
\end{theorem}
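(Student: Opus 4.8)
The plan is to use the Faedo--Galerkin method on the equivalent semilinear parabolic system~\eqref{eq:llg-equiv-pde}, obtain uniform-in-$n$ estimates in the energy spaces up to~$\HB^2$ in the spatial variable, pass to the limit, and then recover the constraint $|\vec{m}|=1$ a posteriori. Concretely, I would take $\{\vec{w}_k\}_{k\ge 1}$ to be the eigenfunctions of $-\Delta$ on $\Omega$ with homogeneous Neumann boundary condition (so they form an orthogonal basis of $\LB^2(\Omega)$ and of $\HB^1(\Omega)$, are smooth, and the projections are stable in $\HB^1$ and $\HB^2$), set $V_n = \mathrm{span}\{\vec{w}_1,\dots,\vec{w}_n\}$, and seek $\vec{m}_n(t)=\sum_{k=1}^n c_{n,k}(t)\vec{w}_k$ solving the projected ODE system
\begin{equation*}
	\liprod{\partial_t \vec{m}_n}{\vec{w}_k} + \beta'\liprod{\nabla\vec{m}_n}{\nabla\vec{w}_k}
	= \beta'\liprod{|\nabla\vec{m}_n|^2\vec{m}_n}{\vec{w}_k} - \alpha'\liprod{\vec{m}_n\times\nabla\vec{m}_n}{\nabla\vec{w}_k} + \liprod{\vec{F}(\vec{m}_n,\nabla\vec{m}_n)}{\vec{w}_k},
\end{equation*}
for $k=1,\dots,n$, with $\vec{m}_n(0)$ the $\LB^2$-projection of $\vec{m}_0$ onto $V_n$. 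The right-hand side is a locally Lipschitz (indeed polynomial-type) function of the coefficients by Lemma~\ref{lem:locally-lipschitz} and the Sobolev embeddings, so Picard--Lindel\"of gives a unique local solution on some $[0,T_n)$; the a priori estimates below will show $T_n$ can be taken to be a common $T^\ast>0$.

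The heart of the argument is the a priori estimates. I would test successively with $\vec{m}_n$, with $-\Delta\vec{m}_n$, and with $\Delta^2\vec{m}_n$ (the latter made legitimate by working within $V_n$, using that $\Delta\vec{m}_n$ and $\Delta^2\vec{m}_n$ stay in $V_n$ and that the Neumann data are respected by the $\vec{w}_k$), or equivalently differentiate the equation and test with $\partial_t$-type quantities following \cite{carbou2001-article}. The parabolic term $-\beta'\Delta\vec{m}_n$ contributes the good dissipative terms $\lnorm{\nabla\vec{m}_n}{2}^2$, $\lnorm{\Delta\vec{m}_n}{2}^2$, $\lnorm{\nabla\Delta\vec{m}_n}{2}^2$; the gyroscopic term $\alpha'\vec{m}_n\times\Delta\vec{m}_n$ is antisymmetric at the lowest level and must be controlled at higher levels using $|\vec{m}_n|$ bounded (which is NOT exactly $1$ at the Galerkin level, only bounded) together with Gagliardo--Nirenberg and Lemma~\ref{carbou-lemma}; the cubic term $\beta'|\nabla\vec{m}_n|^2\vec{m}_n$ and the forcing $\vec{F}$ are handled by Lemma~\ref{lem:growth-condition}, the embeddings $\HB^2\hookrightarrow\LB^\infty$, $\HB^1\hookrightarrow\LB^4$ (valid for $d\le 3$), and interpolation inequalities~\eqref{carbou-lemma:eqn 2}--\eqref{carbou-lemma:eqn 3}, absorbing top-order terms into the dissipation with Young's inequality. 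This yields a differential inequality of the form $\frac{d}{dt}\,y(t) + c\,z(t) \le C\bigl(1+y(t)^\gamma\bigr)$ where $y(t)\sim \hnorm{\vec{m}_n(t)}{2}^2$ and $z(t)\sim \hnorm{\nabla\vec{m}_n(t)}{2}^2 + \lnorm{\nabla\Delta\vec{m}_n}{2}^2$ and $\gamma>1$ because of the polynomial nonlinearities; a nonlinear Gr\"onwall / comparison argument then produces $T^\ast>0$, depending only on $\hnorm{\vec{m}_0}{2}$, $\alpha$, $\beta$ and the data in (A1)--(A2), and bounds
\begin{equation*}
	\vec{m}_n \ \text{bounded in}\ L^\infty(0,T^\ast;\HB^2(\Omega)) \cap L^2(0,T^\ast;\HB^3(\Omega)), \qquad \partial_t\vec{m}_n \ \text{bounded in}\ L^2(0,T^\ast;\LB^2(\Omega)),
\end{equation*}
uniformly in $n$. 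Because the bound $T^\ast$ depends only on the norm of the initial data, a standard continuation argument rules out finite-time blow-up of $\vec{m}_n$ before $T^\ast$, so each Galerkin solution exists on all of $[0,T^\ast]$.

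With these uniform bounds in hand, I would extract a subsequence with $\vec{m}_n \wkstarto \vec{m}$ in $L^\infty(0,T^\ast;\HB^2)$, $\vec{m}_n \weakto \vec{m}$ in $L^2(0,T^\ast;\HB^3)$, $\partial_t\vec{m}_n\weakto\partial_t\vec{m}$ in $L^2(0,T^\ast;\LB^2)$, and by the Aubin--Lions--Simon lemma $\vec{m}_n \to \vec{m}$ strongly in $L^2(0,T^\ast;\HB^2)$ and in $C([0,T^\ast];\HB^s)$ for any $s<2$; this strong convergence, together with the embeddings $\HB^2\hookrightarrow W^{1,\infty}$ for $d\le 3$ is exactly what is needed to pass to the limit in the nonlinear terms $|\nabla\vec{m}_n|^2\vec{m}_n$, $\vec{m}_n\times\nabla\vec{m}_n$ (here the strong $\HB^1$-convergence and the weak $\HB^3$-convergence combine) and $\vec{F}(\vec{m}_n,\nabla\vec{m}_n)$ via the local Lipschitz estimate of Lemma~\ref{lem:locally-lipschitz}. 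Testing against a fixed $\vec{w}_k$ and letting $n\to\infty$ gives the weak formulation in Definition~\ref{def-weak-solution-theoretical}(ii) for the $\vec{w}_k$, hence for all $\boldsymbol{\psi}\in C^\infty(\Omega)$ by density; the regularity $\vec{m}\in C([0,T^\ast];\HB^2)$ (rather than just weak continuity) follows from $\vec{m}\in L^2(0,T^\ast;\HB^3)\cap H^1(0,T^\ast;\LB^2)$ by a Lions--Magenes-type interpolation. Finally I would recover the saturation constraint $|\vec{m}|=1$: setting $\varphi = |\vec{m}|^2-1$, using the original form~\eqref{eq:llg-mod-pde a} is equivalent to~\eqref{eq:llg-equiv-pde}, the identity $\Delta(|\vec{m}|^2)=2|\nabla\vec{m}|^2+2\vec{m}\cdot\Delta\vec{m}$, the orthogonality $\vec{f}(\vec{m},\nabla\vec{m})\cdot\vec{m}=0$ from (A2) and $(\vec{m}\times\Delta\vec{m})\cdot\vec{m}=0$, one derives a linear homogeneous parabolic inequality/equation for $\varphi$ with $\varphi(0)=0$ and homogeneous Neumann data, whence $\varphi\equiv 0$ by uniqueness (a Gr\"onwall estimate on $\lnorm{\varphi(t)}{2}^2$). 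The main obstacle I anticipate is the higher-order a priori estimate: controlling the $\alpha'\,\vec{m}_n\times\Delta\vec{m}_n$ and cubic terms at the level of $\HB^2$/$\HB^3$ without yet knowing $|\vec{m}_n|=1$ — this is where the careful use of Lemma~\ref{carbou-lemma}, Gagliardo--Nirenberg interpolation, and the absorption of top-order terms into the parabolic dissipation is essential, and it is what forces the solution to be only local in time.
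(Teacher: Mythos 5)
Your approach is essentially the same as the paper's: Faedo--Galerkin on the equivalent system~\eqref{eq:llg-equiv-pde}, local existence of the Galerkin ODE via Picard--Lindel\"of and Lipschitz bounds, a priori estimates by testing with $\vec{m}_n$ and $\Delta^2\vec{m}_n$ controlled through Lemma~\ref{carbou-lemma} and a nonlinear Gr\"onwall argument to produce a uniform $T^\ast$, weak/strong convergence extraction via Banach--Alaoglu and Aubin--Lions, passage to the limit in the nonlinearities, and recovery of $|\vec{m}|=1$ from the orthogonality in (A2). Up to there the structure matches the paper.

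There is, however, a genuine gap in your treatment of the $C([0,T^\ast];\HB^2(\Omega))$ regularity. You claim that $\vec{m}\in L^2(0,T^\ast;\HB^3)\cap H^1(0,T^\ast;\LB^2)$ implies $\vec{m}\in C([0,T^\ast];\HB^2)$ by a Lions--Magenes interpolation, but the Lions--Magenes embedding lands you in $C([0,T^\ast];[\HB^3,\LB^2]_{1/2})=C([0,T^\ast];\HB^{3/2})$, not $\HB^2$. To reach $\HB^2$ you need the time derivative one space derivative higher: $[\HB^3,\HB^1]_{1/2}=\HB^2$, i.e.\ you must show $\partial_t\vec{m}_n$ is bounded uniformly in $L^2(0,T^\ast;\HB^1(\Omega))$. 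The only time-derivative estimate you state is $\partial_t\vec{m}_n$ bounded in $L^2(0,T^\ast;\LB^2)$, which is not enough. The paper fills this by an additional Galerkin test with $\boldsymbol{\varphi}=-\partial_t\Delta\vec{m}_k$ (Proposition~\ref{t-nabla-mk-proposition}), producing the bound $\int_0^{T^\ast}\lnorm{\partial_t\nabla\vec{m}_k}{2}^2\,\mathrm{d}s\lsim 1$ which is precisely what makes Theorem~\ref{time-continuity-theorem} applicable with $m=1$ to conclude $\vec{m}\in C([0,T^\ast];\HB^2(\Omega))$. Without that estimate your argument delivers strong continuity only into $\HB^{3/2}$ (or, by $L^\infty(0,T^\ast;\HB^2)$ plus the Strauss lemma, weak continuity into $\HB^2$), which falls short of the theorem. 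Everything else is a correct rendering of the paper's strategy; you should add the $-\partial_t\Delta\vec{m}_k$ test and the corresponding $L^2_t\HB^1_x$ bound on $\partial_t\vec{m}_k$ to close the argument.
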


\begin{theorem}[Stability] \label{unique-thm}
Let $\vec{m}_1$ and $\vec{m}_2$ be two solutions given by Theorem \ref{exist-thm}, which are associated with the initial data $\vec{m}_{0,1}$ and $\vec{m}_{0,2}$ satisfying \eqref{initial-data-assumptions}, respectively. Then there exists a constant $C$ independent of $\vec{m}_{0,1}$ and $\vec{m}_{0,2}$ such that
\begin{equation*}
    \sup_{t \in [0,T']} \lnorm{\vec{m}_1(t,\cdot) - \vec{m}_2(t,\cdot)}{2} \leq C \lnorm{\vec{m}_{0,1} - \vec{m}_{0,2}}{2},
\end{equation*}
where $T' = \min(T_1^\ast, T_2^\ast)$. Consequently, if $\vec{m}_{0,1} = \vec{m}_{0,2}$ the solutions are unique.
\end{theorem}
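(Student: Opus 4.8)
The plan is to derive a Gr\"onwall inequality for $t\mapsto\lnorm{\vec{w}(t)}{2}^2$, where $\vec{w}:=\vec{m}_1-\vec{m}_2$. Since $\vec{m}_i\in C([0,T_i^\ast];\HB^2(\Omega))$, there is an $R>0$ with $\sup_{t\in[0,T']}\hnorm{\vec{m}_i(t)}{2}\le R$ for $i=1,2$, while $\vec{w}\in L^2(0,T';\HB^2(\Omega))\cap H^1(0,T';\LB^2(\Omega))$, so that $t\mapsto\lnorm{\vec{w}(t)}{2}^2$ is absolutely continuous with $\ddt\lnorm{\vec{w}(t)}{2}^2=2\liprod{\partial_t\vec{w}(t)}{\vec{w}(t)}$. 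By the remark following Definition~\ref{def-weak-solution-theoretical}, each $\vec{m}_i$ satisfies \eqref{eq:llg-equiv-pde a} for almost every $(t,\vec{x})$; subtracting the two equations, taking the $\LB^2(\Omega)$ inner product with $\vec{w}(t)$, and using $\liprod{\Delta\vec{w}}{\vec{w}}=-\lnorm{\nabla\vec{w}}{2}^2$ (the boundary term vanishing because $\partial\vec{w}/\partial\vec{n}=\vec{0}$), one gets, for a.e.\ $t\in[0,T']$,
\begin{equation*}
	\frac12\ddt\lnorm{\vec{w}}{2}^2+\beta'\lnorm{\nabla\vec{w}}{2}^2
	=\beta'\,I_1+\alpha'\,I_2+I_3,
\end{equation*}
with $I_1=\liprod{|\nabla\vec{m}_1|^2\vec{m}_1-|\nabla\vec{m}_2|^2\vec{m}_2}{\vec{w}}$, $I_2=\liprod{\vec{m}_1\times\Delta\vec{m}_1-\vec{m}_2\times\Delta\vec{m}_2}{\vec{w}}$, and $I_3=\liprod{\vec{F}(\vec{m}_1,\nabla\vec{m}_1)-\vec{F}(\vec{m}_2,\nabla\vec{m}_2)}{\vec{w}}$.

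The crux is $I_2$, which a priori involves second derivatives that cannot be controlled by $\lnorm{\vec{w}}{2}$ and $\lnorm{\nabla\vec{w}}{2}$ alone. The remedy is to write $\vec{m}_1\times\Delta\vec{m}_1-\vec{m}_2\times\Delta\vec{m}_2=\vec{m}_1\times\Delta\vec{w}+\vec{w}\times\Delta\vec{m}_2$. The contribution of $\vec{w}\times\Delta\vec{m}_2$ tested against $\vec{w}$ vanishes pointwise, since $\vec{w}\times\Delta\vec{m}_2\perp\vec{w}$. For the remaining piece, the cyclic identity $(\vec{a}\times\vec{b})\cdot\vec{c}=\vec{b}\cdot(\vec{c}\times\vec{a})$ gives $\liprod{\vec{m}_1\times\Delta\vec{w}}{\vec{w}}=\intomega{\Delta\vec{w}\cdot(\vec{w}\times\vec{m}_1)}$; integrating by parts (again the boundary term vanishes by the Neumann condition) transfers one derivative off $\Delta\vec{w}$, and the part involving $\nabla\vec{w}:(\nabla\vec{w}\times\vec{m}_1)$ vanishes identically (a scalar triple product with a repeated factor), leaving only $-\intomega{\nabla\vec{w}:(\vec{w}\times\nabla\vec{m}_1)}$, in which no second derivatives of the solutions appear. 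This step is exactly where the cross-product algebra and the homogeneous Neumann condition are essential.

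It then remains to bound three ``lower-order'' quantities: the integral $-\intomega{\nabla\vec{w}:(\vec{w}\times\nabla\vec{m}_1)}$ just obtained, the splitting $I_1=\liprod{|\nabla\vec{m}_1|^2\vec{w}}{\vec{w}}+\intomega{\big(\nabla(\vec{m}_1+\vec{m}_2):\nabla\vec{w}\big)\,(\vec{m}_2\cdot\vec{w})}$, and $I_3$. For the first two I would use H\"older's inequality together with the embeddings $\HB^1(\Omega)\hookrightarrow\LB^4(\Omega)\cap\LB^6(\Omega)$ and $\HB^2(\Omega)\hookrightarrow\LB^\infty(\Omega)$ (valid for $d\le3$) and the Gagliardo--Nirenberg inequalities $\lnorm{\vec{w}}{4}\lsim\lnorm{\nabla\vec{w}}{2}^{3/4}\lnorm{\vec{w}}{2}^{1/4}+\lnorm{\vec{w}}{2}$ and $\lnorm{\vec{w}}{3}\lsim\lnorm{\nabla\vec{w}}{2}^{1/2}\lnorm{\vec{w}}{2}^{1/2}+\lnorm{\vec{w}}{2}$; since $\hnorm{\vec{m}_i}{2}\le R$ (hence $\lnorm{\nabla\vec{m}_i}{4},\lnorm{\nabla\vec{m}_i}{6},\lnorm{\vec{m}_i}{\infty}\lsim R$), each of these integrals is dominated by a sum of terms of the form $C(R)\lnorm{\nabla\vec{w}}{2}^{\theta}\lnorm{\vec{w}}{2}^{2-\theta}$ with $\theta<2$, which Young's inequality turns into $\varepsilon\lnorm{\nabla\vec{w}}{2}^2+C(\varepsilon,R)\lnorm{\vec{w}}{2}^2$. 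The term $I_3$ is handled directly by Lemma~\ref{lem:locally-lipschitz}, whose constant depends only on $R$: $|I_3|\le\lnorm{\vec{F}(\vec{m}_1,\nabla\vec{m}_1)-\vec{F}(\vec{m}_2,\nabla\vec{m}_2)}{2}\lnorm{\vec{w}}{2}\lsim\hnorm{\vec{w}}{1}\lnorm{\vec{w}}{2}\le\varepsilon\lnorm{\nabla\vec{w}}{2}^2+C\lnorm{\vec{w}}{2}^2$.

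Collecting the estimates and choosing $\varepsilon$ small enough that the $\varepsilon\lnorm{\nabla\vec{w}}{2}^2$ terms are absorbed into $\beta'\lnorm{\nabla\vec{w}}{2}^2$ on the left (possible since $\beta'>0$), then discarding the remaining nonnegative gradient term, I arrive at $\ddt\lnorm{\vec{w}(t)}{2}^2\le C(R)\lnorm{\vec{w}(t)}{2}^2$ on $[0,T']$. Gr\"onwall's inequality gives $\lnorm{\vec{w}(t)}{2}^2\le e^{C(R)t}\lnorm{\vec{w}(0)}{2}^2$, so $\sup_{t\in[0,T']}\lnorm{\vec{m}_1(t,\cdot)-\vec{m}_2(t,\cdot)}{2}\le e^{C(R)T'/2}\lnorm{\vec{m}_{0,1}-\vec{m}_{0,2}}{2}$, which is the asserted stability bound; if $\vec{m}_{0,1}=\vec{m}_{0,2}$ then $\vec{w}\equiv\vec{0}$ on $[0,T']$, giving uniqueness. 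The main obstacle is the gyromagnetic term $I_2$, resolved by the splitting plus integration by parts above; a secondary point requiring care is that every power of $\lnorm{\nabla\vec{w}}{2}$ arising from the nonlinear terms must be kept strictly below $2$ so that Young's inequality applies, which is precisely where the dimensional restriction $d\le3$ enters, through the Sobolev embeddings used.
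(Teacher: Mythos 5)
Your proposal is correct and follows the same overall strategy as the paper: form $\vec{w}=\vec{m}_1-\vec{m}_2$, take the $\LB^2(\Omega)$ inner product of the difference of the two strong equations with $\vec{w}$, kill one piece of the gyromagnetic term by orthogonality ($\vec{w}\times\Delta\vec{m}_i\perp\vec{w}$), integrate the other piece by parts using the scalar triple product to discard the second-derivative contribution, bound $\vec{F}(\vec{m}_1,\nabla\vec{m}_1)-\vec{F}(\vec{m}_2,\nabla\vec{m}_2)$ with Lemma~\ref{lem:locally-lipschitz}, absorb $\lnorm{\nabla\vec{w}}{2}^2$ with Young's inequality, and finish with Gr\"onwall. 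The one genuine point of divergence is in how the remaining integrals (your $-\intomega{\nabla\vec{w}:(\vec{w}\times\nabla\vec{m}_1)}$ and $I_1$) are estimated. The paper bounds them via $\lnorm{\nabla\vec{m}_i}{\infty}\lsim\hnorm{\vec{m}_i}{3}$, producing a Gr\"onwall factor $\hnorm{\vec{m}_1}{3}^2+\hnorm{\vec{m}_2}{3}^2+1$ that is integrable on $[0,T']$ precisely because $\vec{m}_i\in L^2(0,T';\HB^3(\Omega))$, which is part of the regularity furnished by Theorem~\ref{exist-thm}. You instead stay at the $\HB^2$ level, using $\lnorm{\nabla\vec{m}_i}{6}\lsim\hnorm{\vec{m}_i}{2}\le R$, the embedding $\HB^2\hookrightarrow\LB^\infty$, and Gagliardo--Nirenberg interpolation of $\lnorm{\vec{w}}{3}$ and $\lnorm{\vec{w}}{4}$ between $\lnorm{\vec{w}}{2}$ and $\lnorm{\nabla\vec{w}}{2}$, which yields a time-independent Gr\"onwall rate $C(R)$. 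Your variant is a shade more self-contained in that it leans only on the uniform $C([0,T'];\HB^2)$ bound and does not invoke the $L^2_t\HB^3_x$ regularity, at the cost of slightly more intricate H\"older/Young bookkeeping and the requirement that every power of $\lnorm{\nabla\vec{w}}{2}$ stay strictly below $2$ (which you correctly flag as where $d\le3$ enters). Both routes prove the stated stability bound.
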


\subsection{Faedo-Galerkin Method}

Let $\{\boldsymbol\varphi_j\}_{j=1}^{\infty}$ denote the orthonormal basis of $\LB^2(\Omega)$ where each $\boldsymbol\varphi_j$ is an eigenfunction of $-\Delta$ with the homogeneous Neumann boundary condition. That is,
$\boldsymbol{\varphi}_j$ satisfies
\begin{equation*}
    -\Delta \boldsymbol{\varphi}_j = \lambda_j \boldsymbol{\varphi}_j \quad \text{and} \quad \frac{\partial \boldsymbol{\varphi}_j}{\partial \vec{n}} = \vec{0} \text{ on } \partial \Omega.
\end{equation*}
Define $V_k := \mathrm{span}\{\boldsymbol\varphi_1, \ldots, \boldsymbol\varphi_k\}$ and $\Pi_k : \LB^2(\Omega) \to V_k$ to be the orthogonal projection with respect to the $\LB^2(\Omega)$ inner product. 

We find a Galerkin solution to \eqref{eq:llg-equiv-pde a} by solving for $\vec{m}_k \in V_k$ that satisfies
\begin{equation} \label{eq:proj-galerkin}
\begin{aligned} 
    \frac{\partial \vec{m}_k}{\partial t} - \beta'\Delta \vec{m}_k &= 
		\beta' \Pi_k (|\nabla \vec{m}_k|^2 \vec{m}_k) + \alpha' \Pi_k(\vec{m}_k \times \Delta \vec{m}_k) + \Pi_k(\vec{F}\left(\vec{m}_k, \nabla \vec{m}_k\right)), \\
  \vec{m}_k(0,\cdot) &= \Pi_k (\vec{m}_0).
\end{aligned}
\end{equation}

\begin{lemma} \label{lipschitz-lemma}
For each $k \in \N$ and $\vec{u} \in V_k$, define
\begin{align*}
    F_k^1(\vec{u}) &= \Delta \vec{u} \\
    F_k^2(\vec{u}) &= \Pi_k\left(|\nabla \vec{u}|^2 \vec{u}\right) \\
    F_k^3(\vec{u}) &= \Pi_k(\vec{u} \times \Delta \vec{u}) \\
    F_k^4(\vec{u}) &= \Pi_k\left(\vec{F}\left(\vec{u}, \nabla \vec{u}\right)\right).
\end{align*}
Each $F_k^i$ is a well-defined mapping from $V_k$ to $V_k$. For $i=1,4$ the functions are globally Lipschitz, while for $i=2,3$ the functions are locally Lipschitz.
\end{lemma}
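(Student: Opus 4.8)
The plan is to verify well-definedness first and then handle the Lipschitz estimates term by term, exploiting the fact that on the finite-dimensional space $V_k$ all norms are equivalent. Since every $\vec{u}\in V_k$ is a finite linear combination of the smooth eigenfunctions $\boldsymbol\varphi_1,\ldots,\boldsymbol\varphi_k$, it is $C^\infty$ on $\overline\Omega$, so $\Delta\vec u$, $|\nabla\vec u|^2\vec u$, $\vec u\times\Delta\vec u$ and $\vec F(\vec u,\nabla\vec u)$ are all well-defined functions in $\LB^2(\Omega)$; applying $\Pi_k$ then lands us back in $V_k$. For $F_k^1$ we note $\Delta\vec u=-\sum_j\lambda_j c_j\boldsymbol\varphi_j\in V_k$ already, so $\Pi_k$ is not even needed. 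Thus each $F_k^i:V_k\to V_k$ is well-defined.

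For the Lipschitz claims I would treat the two easy cases first. $F_k^1$ is linear, hence globally Lipschitz on $V_k$; concretely $\lnorm{\Delta\vec u-\Delta\vec v}{2}\le\lambda_k\lnorm{\vec u-\vec v}{2}$ by orthonormality of the basis. For $F_k^4$, since $\Pi_k$ is an $\LB^2$-contraction it suffices to bound $\lnorm{\vec F(\vec u,\nabla\vec u)-\vec F(\vec v,\nabla\vec v)}{2}$, and Lemma~\ref{lem:locally-lipschitz} gives $\lsim\hnorm{\vec u-\vec v}{1}$ for $\vec u,\vec v$ in any $\HB^2$-ball; but on the finite-dimensional $V_k$ the norms $\hnorm{\cdot}{1}$, $\hnorm{\cdot}{2}$ and $\lnorm{\cdot}{2}$ are all equivalent (with constants depending on $k$), and every bounded set is contained in some ball, so in fact $F_k^4$ is globally Lipschitz on $V_k$. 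For $F_k^2$ and $F_k^3$ I would localise to a ball $B_\rho\subset V_k$ and estimate the differences: for $F_k^2$, write $|\nabla\vec u|^2\vec u-|\nabla\vec v|^2\vec v=|\nabla\vec u|^2(\vec u-\vec v)+(|\nabla\vec u|^2-|\nabla\vec v|^2)\vec v$ and use $|\nabla\vec u|^2-|\nabla\vec v|^2=(\nabla\vec u+\nabla\vec v):(\nabla\vec u-\nabla\vec v)$ together with the equivalence of norms on $V_k$ (in particular $\LB^\infty$ control of $\vec u$, $\nabla\vec u$) to get a bound $C(\rho)\lnorm{\vec u-\vec v}{2}$ that blows up as $\rho\to\infty$ because it is cubic in the radius — hence only local Lipschitz. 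For $F_k^3$, $\vec u\times\Delta\vec u-\vec v\times\Delta\vec v=(\vec u-\vec v)\times\Delta\vec u+\vec v\times(\Delta\vec u-\Delta\vec v)$, and again $\LB^\infty$-bounds on $\vec v$ and $\Delta\vec v$ over the ball, plus $\lnorm{\Delta(\cdot)}{2}\le\lambda_k\lnorm{\cdot}{2}$, yield a local (radius-dependent, quadratic) Lipschitz constant.

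The main obstacle, such as it is, is merely bookkeeping: one must be careful that the equivalence-of-norms constants on $V_k$ depend on $k$ (indeed they degrade as $k\to\infty$), so these Lipschitz bounds are genuinely $k$-dependent and cannot be used to get $k$-uniform estimates — that is deferred to the a priori energy estimates elsewhere. Here, however, all we need is that for each fixed $k$ the right-hand side of the Galerkin ODE system \eqref{eq:proj-galerkin} is locally Lipschitz in $\vec m_k$, which the lemma supplies, so Picard--Lindelöf gives a unique local-in-time solution. I would also remark that the quadratic/cubic structure of $F_k^2,F_k^3$ is exactly why they fail to be globally Lipschitz, in contrast with the linear $F_k^1$ and the globally-Lipschitz-on-$V_k$ nonlinearity $F_k^4$.
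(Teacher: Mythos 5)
Your overall route matches the paper's: well-definedness follows from the smoothness of the eigenbasis, $F_k^1$ is linear hence globally Lipschitz (with constant $\lambda_k$), and $F_k^2$, $F_k^3$ are handled by splitting the difference into products and invoking $\LB^\infty$-control via the equivalence of norms on the finite-dimensional $V_k$, giving radius-dependent Lipschitz constants; $F_k^4$ is reduced to Lemma~\ref{lem:locally-lipschitz} together with $\hnorm{\cdot}{1}\simeq\lnorm{\cdot}{2}$ on $V_k$. The one concrete flaw is your justification that $F_k^4$ is \emph{globally} Lipschitz. The step ``every bounded set is contained in some ball, so in fact $F_k^4$ is globally Lipschitz'' is a non-sequitur: a locally Lipschitz map with a constant $C(R)$ that blows up as $R\to\infty$ does not become globally Lipschitz merely because the space is finite-dimensional (e.g.\ $x\mapsto x^2$ on $\R$). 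Lemma~\ref{lem:locally-lipschitz} explicitly allows its constant to depend on $R$, and under Assumption (A2) the component functions $\vec{f}_i$ may grow like $1+|\vec{a}|^p$, so for $p\geq 1$ the map $F_k^4$ is genuinely only locally Lipschitz. In fairness, the paper's own proof has the identical gap: the chain $\lnorm{F_k^4(\vec{u})-F_k^4(\vec{v})}{2}\lsim\hnorm{\vec{u}-\vec{v}}{1}\simeq\lnorm{\vec{u}-\vec{v}}{2}$ silently uses the $R$-dependent bound from Lemma~\ref{lem:locally-lipschitz} without a ball restriction, so the ``globally Lipschitz'' classification of $F_k^4$ is not actually established there either. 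None of this affects what the lemma is used for, since local Lipschitz is all that the Cauchy--Lipschitz theorem requires for the Galerkin ODE \eqref{eq:proj-galerkin}, and you correctly identify that the $k$-dependence of the Lipschitz constants is addressed by the subsequent a priori estimates rather than here — but you should not claim global Lipschitz for $F_k^4$ from this argument.
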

\begin{proof}
The results for $F_k^1$ and $F_k^3$ are well known (see, e.g., \cite{soenjaya2023-article}), so we prove only the result for~$F_k^2$ and~$F_k^4$. Let $\vec{u} \in V_k$. As the eigenfunctions are smooth, so is $|\nabla \vec{u}|^2\vec{u}$. Therefore $F_k^2$ is well defined and we have
\begin{align*}
    \lnorm{F_k^2(\vec{u}) - F_k^2(\vec{u})}{2}^2 &\leq \lnorm{|\nabla \vec{u}|^2 \vec{u} - |\nabla \vec{v}|^2 \vec{v}}{2}^2 \\
    &\leq \lnorm{|\nabla \vec{u}|^2 (\vec{u} - \vec{v})}{2}^2 + \lnorm{(|\nabla \vec{u}|^2 - |\nabla \vec{v}|^2)\vec{v}}{2}^2 \\
    &\leq \lnorm{|\nabla \vec{u}|^2 (\vec{u} - \vec{v})}{2}^2 + \lnorm{(|\nabla \vec{u}| + |\nabla \vec{v}|)(|\nabla \vec{u} - \nabla \vec{v}|)\vec{v}}{2}^2 \\
    &\leq \lnorm{\nabla \vec{u}}{\infty}^4 \lnorm{\vec{u} - \vec{v}}{2}^2 \\
    &\qquad + \left(\lnorm{\nabla \vec{u}}{\infty}^2 + \lnorm{\nabla \vec{v}}{\infty}^2\right)\lnorm{\vec{v}}{\infty}^2\lnorm{\nabla \vec{u} - \nabla \vec{v}}{2}^2 \\[1ex]
    &\lsim \left(\lnorm{\nabla \vec{u}}{\infty}^4 +\left(\lnorm{\nabla \vec{u}}{\infty}^2 + \lnorm{\nabla \vec{v}}{\infty}^2\right)\lnorm{\vec{v}}{\infty}^2\right) \lnorm{\vec{u} - \vec{v}}{2}^2,
\end{align*}
since the $\HB^1$ norm is equivalent to the $\LB^2$ norm in the finite-dimensional space $V_k$. Similarly, $F_k^4$ is well defined and by Lemma \ref{lem:locally-lipschitz}
\begin{align*}
    \lnorm{F_k^4(\vec{u}) - F_k^4(\vec{v})}{2} \leq \lnorm{\vec{F}\left(\vec{u}, \nabla\vec{u}\right) - \vec{F}(\vec{v}, \nabla \vec{v})}{2} \lsim \hnorm{\vec{u} - \vec{v}}{1} \simeq \lnorm{\vec{u} - \vec{v}}{2},
\end{align*}
completing the proof of the lemma.
\end{proof}

The Cauchy-Lipschitz Theorem and Lemma \ref{lipschitz-lemma} guarantee that such a solution exists. As each eigenfunction belongs to $C^{\infty}(\overline{\Omega})$, so does $\vec{m}_k$. Now we prove some \textit{a priori} results for the Galerkin solution.

Problem \eqref{eq:proj-galerkin} is equivalent to the following problem. In other words, we have some $\vec{m}_k \in V_k$ such that
\begin{equation} \label{galerkin-eqn}
    \begin{aligned}
        \liprod{\partial_t \vec{m}_k}{\boldsymbol\varphi} + \beta'\liprod{\nabla \vec{m}_k}{\nabla \boldsymbol\varphi} &= \beta'\liprod{|\nabla \vec{m}_k|^2 \vec{m}_k}{\boldsymbol\varphi} \\
        &\qquad + \alpha'\liprod{\vec{m}_k \times \Delta \vec{m}_k}{ \boldsymbol\varphi} \\
        &\qquad + \liprod{\vec{F}(\vec{m}_k,\nabla\vec{m}_k)}{\boldsymbol\varphi}, \\
        \liprod{\vec{m}_k(0,\cdot)}{\boldsymbol{\varphi}} &= \liprod{\vec{m}_0}{\boldsymbol{\varphi}},
    \end{aligned}
\end{equation}
for all $\boldsymbol{\varphi} \in V_k$.

\subsection{\textit{A priori} Estimates}

\begin{proposition} \label{prop:linf-h2}
There exists a $T^\ast > 0$ such that $T^\ast \leq T$ and such that for each $k \in \N$ and $t \in [0,T^\ast)$
\begin{equation*}
    \sup_{0 \leq s \leq t} \hnorm{\vec{m}_k(s)}{2}^2 + \int_0^t \hnorm{\vec{m}_k(s,\cdot)}{3} \; \ds
    \lsim 1,
\end{equation*}
where the constant is independent of $k$ and $T^\ast$.
\end{proposition}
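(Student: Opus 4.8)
The plan is to run weighted energy estimates on the Galerkin system \eqref{galerkin-eqn}--\eqref{eq:proj-galerkin} at two levels — the $\LB^2$ level and the $\Delta^2$ level — add them, and then close the resulting differential inequality on a short time interval by comparison with a scalar ODE, picking up the $\LB^2(\HB^3)$ bound as a by-product. The starting observation is that, since $\vec{m}_k$ lies in $V_k$ and hence (together with $\Delta\vec{m}_k$, $\partial_t\vec{m}_k$, $\Delta^2\vec{m}_k$, all of which are linear combinations of the $\boldsymbol{\varphi}_j$) satisfies the homogeneous Neumann condition, Lemma~\ref{carbou-lemma} gives $\hnorm{\vec{m}_k}{2}^2 \simeq \lnorm{\vec{m}_k}{2}^2 + \lnorm{\Delta\vec{m}_k}{2}^2$ and $\hnorm{\vec{m}_k}{3}^2 \lsim \lnorm{\vec{m}_k}{2}^2 + \lnorm{\nabla\vec{m}_k}{2}^2 + \lnorm{\Delta\vec{m}_k}{2}^2 + \lnorm{\nabla\Delta\vec{m}_k}{2}^2$, so it is enough to control $\lnorm{\vec{m}_k}{2}$ and $\lnorm{\Delta\vec{m}_k}{2}$ in $\LB^\infty(0,T^\ast)$ and $\lnorm{\nabla\vec{m}_k}{2}$, $\lnorm{\nabla\Delta\vec{m}_k}{2}$ in $\LB^2(0,T^\ast)$.

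First I would test \eqref{galerkin-eqn} with $\boldsymbol{\varphi} = \vec{m}_k$: the cross-product term vanishes pointwise, leaving $\tfrac12\ddt\lnorm{\vec{m}_k}{2}^2 + \beta'\lnorm{\nabla\vec{m}_k}{2}^2 = \beta'\liprod{|\nabla\vec{m}_k|^2\vec{m}_k}{\vec{m}_k} + \liprod{\vec{F}(\vec{m}_k,\nabla\vec{m}_k)}{\vec{m}_k}$, which by $\HB^2\hookrightarrow\LB^\infty$ and Lemma~\ref{lem:growth-condition} is bounded by $C(1+\hnorm{\vec{m}_k}{2}^{\sigma})$ for a suitable power $\sigma>1$. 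Then — and this is the crux — I would test the strong form \eqref{eq:proj-galerkin} with $\boldsymbol{\varphi} = \Delta^2\vec{m}_k \in V_k$; the projections $\Pi_k$ drop out because $\Pi_k$ is self-adjoint and $\Delta^2\vec{m}_k\in V_k$, and repeated integration by parts (all boundary terms vanishing by the Neumann conditions just noted) turns the equation into $\tfrac12\ddt\lnorm{\Delta\vec{m}_k}{2}^2 + \beta'\lnorm{\nabla\Delta\vec{m}_k}{2}^2 = -\beta'\liprod{\nabla(|\nabla\vec{m}_k|^2\vec{m}_k)}{\nabla\Delta\vec{m}_k} - \alpha'\liprod{\nabla\vec{m}_k\times\Delta\vec{m}_k}{\nabla\Delta\vec{m}_k} - \liprod{\nabla\vec{F}(\vec{m}_k,\nabla\vec{m}_k)}{\nabla\Delta\vec{m}_k}$, where the contribution $\vec{m}_k\times\nabla\Delta\vec{m}_k$ vanishes after pairing with $\nabla\Delta\vec{m}_k$. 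Each term on the right is estimated by Hölder's inequality, the embeddings $\HB^1\hookrightarrow\LB^6$ and $\HB^2\hookrightarrow\LB^\infty$, the interpolation bound \eqref{carbou-lemma:eqn 3} for $\lnorm{\nabla^2\vec{m}_k}{3}$, and Lemma~\ref{lem:growth-condition}, and then Young's inequality is used to absorb a small multiple of $\lnorm{\nabla\Delta\vec{m}_k}{2}^2$ into the dissipation, leaving again only a polynomial $C(1+\hnorm{\vec{m}_k}{2}^{\sigma})$.

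Adding the two estimates and writing $y_k(t) := 1 + \lnorm{\vec{m}_k(t)}{2}^2 + \lnorm{\Delta\vec{m}_k(t)}{2}^2 \simeq 1 + \hnorm{\vec{m}_k(t)}{2}^2$, I obtain $y_k'(t) + c\big(\lnorm{\nabla\vec{m}_k(t)}{2}^2 + \lnorm{\nabla\Delta\vec{m}_k(t)}{2}^2\big) \leq C\,y_k(t)^{\sigma}$ with $\sigma>1$ and $c,C$ independent of $k$, while $y_k(0) \lsim 1 + \hnorm{\vec{m}_0}{2}^2$ because $\Pi_k$ is bounded on $\HB^2$ for data satisfying \eqref{initial-data-assumptions}. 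Comparing with the solution of $Y'=CY^\sigma$, $Y(0)=y_k(0)$, whose existence interval depends only on $\hnorm{\vec{m}_0}{2}$, I fix $T^\ast>0$ with $T^\ast\leq T$ so small that $y_k(t)\leq 2y_k(0)$ for all $t\in[0,T^\ast)$ and all $k$; this yields the $\LB^\infty(\HB^2)$ bound uniformly in $k$ and $T^\ast$. Integrating the differential inequality over $[0,T^\ast)$ then bounds $\int_0^{T^\ast}\lnorm{\nabla\Delta\vec{m}_k}{2}^2$, and combined with the norm equivalences above and $\int_0^t\hnorm{\vec{m}_k(s)}{3}\,\ds \leq (T^\ast)^{1/2}\big(\int_0^{T^\ast}\hnorm{\vec{m}_k(s)}{3}^2\,\ds\big)^{1/2}$ this gives the stated estimate. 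The main obstacle is the bookkeeping in the $\Delta^2\vec{m}_k$-test — choosing the Hölder/Sobolev/interpolation exponents for $|\nabla\vec{m}_k|^2\vec{m}_k$ and $\nabla\vec{m}_k\times\Delta\vec{m}_k$ so that after Young's inequality nothing beyond powers of $\hnorm{\vec{m}_k}{2}$ survives; a secondary point is the continuation argument showing the Galerkin ODE solution (a priori only on a maximal interval, since $F_k^2,F_k^3$ are merely locally Lipschitz) actually persists on $[0,T^\ast)$, which is immediate from the bound on $y_k$.
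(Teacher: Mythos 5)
Your proposal follows essentially the same path as the paper's own proof: test \eqref{galerkin-eqn} first with $\vec{m}_k$ and then (after noting $\Pi_k$ commutes with $\Delta^2\vec{m}_k \in V_k$) with $\Delta^2\vec{m}_k$, integrate by parts to land on the three error terms $I_1, I_2, I_3$, add the two levels, close via a Gronwall-type/ODE comparison to pick $T^\ast$, and finally integrate the dissipation to get the $\LB^2(\HB^3)$ bound. The only minor cosmetic difference is that in the $\vec{m}_k$-test the paper kills the $\vec{F}$ term outright using the orthogonality (A2) (since $\vec{F}\cdot\vec{m}_k=0$), whereas you retain it and bound it through Lemma~\ref{lem:growth-condition}; both are correct, the paper's version is just slightly cleaner.
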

\begin{proof}
Setting $\boldsymbol{\varphi} = \vec{m}_k$ in (\ref{galerkin-eqn}) and using assumption (A2) of $\vec{f}$, which is also satisfied by $\vec{F}$, we have
\begin{align*}
     \frac{1}{2} \ddt \lnorm{\vec{m}_k}{2}^2 + \beta'\lnorm{\nabla \vec{m}_k}{2}^2 &\lsim \lnorm{\vec{m}_k}{\infty}^2 \lnorm{\nabla \vec{m}_k}{2}^2.
\end{align*}
Then by Lemma \ref{carbou-lemma} and the fact that $\lnorm{\vec{m}_k}{\infty} \lsim \hnorm{\vec{m}_k}{2}$ we obtain
\begin{equation} \label{mk-estimate}
    \ddt \lnorm{\vec{m}_k}{2}^2  + 2 \beta'\lnorm{\nabla \vec{m}_k}{2}^2 \lsim \left(\|\vec{m}_k\|_{\LB^2(\Omega)}^2 + \|\Delta \vec{m}_k\|_{\LB^2(\Omega)}^2\right)^2.
\end{equation}
Setting $\boldsymbol{\varphi} = \Delta^2 \vec{m}_k$ in (\ref{galerkin-eqn}) we obtain
\begin{equation} \label{delta-mk-estimate}
    \ddt \lnorm{\Delta \vec{m}_k}{2}^2 + 2\beta'\lnorm{\nabla \Delta \vec{m}_k}{2}^2 \lsim |I_1| + |I_2| + |I_3|,
\end{equation}
where
\begin{align*}
    I_1 &:= -\intomega{\nabla (|\nabla \vec{m}_k|^2 \vec{m}_k) \cdot \nabla \Delta \vec{m}_k}, \\
    I_2 &:= -\intomega{(\nabla\vec{m}_k \times \Delta \vec{m}_k) \cdot \nabla \Delta \vec{m}_k}, \\
    I_3 &:= -\intomega{\nabla \vec{F}(\vec{m}_k,\nabla \vec{m}_k) \cdot \nabla \Delta \vec{m}_k}.
\end{align*}
By adding (\ref{mk-estimate}) and (\ref{delta-mk-estimate}) together, we arrive at the following inequality 
\begin{align} \label{l2-h2-estimate-sum}
    \begin{split}
         \ddt \left(\|\vec{m}_k\|_{\LB^2(\Omega)}^2 + \|\Delta \vec{m}_k\|_{\LB^2(\Omega)}^2\right) &+ 2\beta'\|\nabla \vec{m}_k\|_{\LB^2(\Omega)}^2 + 2\beta'\|\nabla \Delta \vec{m}_k\|_{\LB^2(\Omega)}^2 \\
        &\lsim \left(\|\vec{m}_k\|_{\LB^2(\Omega)}^2 + \|\Delta \vec{m}_k\|_{\LB^2(\Omega)}^2\right)^2 + |I_1| + |I_2| + |I_3|.
    \end{split}
\end{align}
Expanding the terms in $I_1$ we obtain by using Holder's inequality,
\begin{align*}
    |I_1| &\lsim \int_{\Omega} |\nabla \vec{m}_k|^3 |\nabla \Delta \vec{m}_k| \; d \vec{x} + \int_{\Omega} |\nabla^2 \vec{m}_k| |\nabla \vec{m}_k| |\vec{m}_k| |\nabla \Delta \vec{m}_k| \; d \vec{x}, \\
    &\leq \|\nabla \vec{m}_k\|_{\LB^6(\Omega)}^3 \|\nabla \Delta \vec{m}_k\|_{\LB^2(\Omega)} + \|\vec{m}_k\|_{\LB^{\infty}(\Omega)} \|\nabla^2 \vec{m}_k\|_{\LB^3(\Omega)} \|\nabla \vec{m}_k\|_{\LB^6(\Omega)} \|\nabla \Delta \vec{m}_k\|_{\LB^2(\Omega)}.
\end{align*}
We can similarly bound $I_2$ as
\begin{align*}
    |I_2| &\lsim \|\nabla \vec{m}_k\|_{\LB^6(\Omega)} \|\Delta \vec{m}_k\|_{\LB^3(\Omega)} \|\nabla \Delta \vec{m}_k\|_{\LB^2(\Omega)}.
\end{align*}
For $I_3$ we make use of Lemmas \ref{lem:growth-condition} and \ref{carbou-lemma} to obtain
\begin{align*}
    |I_3| &\lsim \lnorm{\nabla \vec{F}(\vec{m}_k, \nabla \vec{m}_k)}{2} \lnorm{\nabla \Delta \vec{m}_k}{2} \\
    &\lsim \left(1 + \hnorm{\vec{m}_k}{2}^{\mu}\right) \lnorm{\nabla \Delta \vec{m}_k}{2} \\
    &\lsim \left(1 + \left(\|\vec{m}_k\|_{\LB^2(\Omega)}^2 + \|\Delta \vec{m}_k\|_{\LB^2(\Omega)}^2\right)^{\mu/2}\right) \|\nabla \Delta \vec{m}_k\|_{\LB^2(\Omega)}.
\end{align*}
Let
\begin{align*}
    A_k(t) &= \lnorm{\vec{m}_k(t)}{2}^2 + \lnorm{\Delta \vec{m}_k(t)}{2}^2, \\
    B_k(t) &= \lnorm{\nabla \Delta \vec{m}_k(t)}{2}^2,
\end{align*}
Applying the fact that $\lnorm{\Delta \vec{m}_k}{3} \lsim \lnorm{\nabla^2 \vec{m}_k}{3}$ and Lemma \ref{carbou-lemma}, we write these estimates in terms of $A_k(t)$ and $B_k(t)$. That is,
\begin{align*}
    |I_1| + |I_2| + |I_3| &\lsim A_k^{3/2}(t) B_k^{1/2}(t) + A_k^{5/4}(t) B_k^{3/4}(t) + A_k(t) B_k^{1/2}(t) + A_k^{3/4}(t) B_k^{3/4}(t) \\
    &\qquad+ \left(1 + A_k^{\mu/2}(t) \right)B_k^{1/2}(t).
\end{align*}
Hence for sufficiently small $\varepsilon$, we conclude from \eqref{l2-h2-estimate-sum} that
\begin{equation*}
    A_k(t) + \int_0^t B_k(s) \; \ds \lsim A_k(0) + \int_0^t 1 + A_k^2(s) + A_k^3(s) + A_k^5(s) + A_k^{ \mu}(s) \; \ds.
\end{equation*}
The Generalised Gronwall Inequality (Theorem \ref{generalised-gronwall-lemma}) guarantees that there exists some $T^* \in (0,T]$ such that for any $s \in [0,T^*]$ we have
\begin{equation*}
    \sup_{s \in [0,t]} A_k(s) \lsim 1.
\end{equation*}
The result follows from Lemma \ref{carbou-lemma}. 

For the second part, we use (\ref{l2-h2-estimate-sum}) and integrate over $[0,t]$ to obtain
\begin{equation*}
    \int_0^t \lnorm{\nabla \vec{m}_k(s,\cdot)}{2}^2 + \lnorm{\nabla \Delta \vec{m}_k(s,\cdot)}{2}^2 \; \ds \lsim 1.
\end{equation*}
Using Lemma \ref{carbou-lemma} we obtain
\begin{align*}
    \int_0^t \hnorm{\vec{m}_k(s,\cdot)}{3}^2 \; \ds &= \int_0^t \lnorm{\vec{m}_k(s,\cdot)}{2}^2 + \hnorm{\nabla \vec{m}_k(s,\cdot)}{2}^2 \; \ds \\[1ex]
    &\lsim \int_0^t \lnorm{\vec{m}_k(s,\cdot)}{2}^2 + \lnorm{\nabla \vec{m}_k(s,\cdot)}{2}^2 \; \ds \\
    &\qquad + \int_0^t \lnorm{\Delta \vec{m}_k(s,\cdot)}{2}^2 + \lnorm{\nabla \Delta \vec{m}_k(s,\cdot)}{2}^2 \; \ds,
\end{align*}
which is bounded by the first part, completing the proof.
\end{proof}

The next two propositions deal with estimates on the time derivative of the Galerkin solution i.e., $\partial_t \vec{m}_k$.

\begin{proposition} \label{t-mk-proposition}
Let $T^*$ be as in Proposition \ref{prop:linf-h2}. For each $k \in \N$ and $t \in [0,T^*]$
\begin{equation*}
    \sup_{0 \leq s \leq t} \lnorm{\partial_t \vec{m}_k(s)}{2}^2 \lsim 1.
\end{equation*}
\end{proposition}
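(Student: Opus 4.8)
The plan is to read the bound off the Galerkin identity \eqref{eq:proj-galerkin} directly, without a further energy argument. Because each $\boldsymbol\varphi_j$ is an eigenfunction of $-\Delta$, the subspace $V_k$ is invariant under $\Delta$, so $\Delta\vec{m}_k(s)\in V_k$ and therefore $\partial_t\vec{m}_k(s)\in V_k$; thus \eqref{eq:proj-galerkin} may be read as a pointwise-in-$s$ identity in $V_k\subset\LB^2(\Omega)$,
\begin{equation*}
    \partial_t\vec{m}_k = \beta'\Delta\vec{m}_k + \beta'\Pi_k\bigl(|\nabla\vec{m}_k|^2\vec{m}_k\bigr) + \alpha'\Pi_k\bigl(\vec{m}_k\times\Delta\vec{m}_k\bigr) + \Pi_k\bigl(\vec{F}(\vec{m}_k,\nabla\vec{m}_k)\bigr).
\end{equation*}
Taking $\LB^2(\Omega)$ norms and using that $\Pi_k$ is an $\LB^2$-orthogonal projection, so $\lnorm{\Pi_k\vec{g}}{2}\le\lnorm{\vec{g}}{2}$, the task reduces to bounding $\lnorm{\Delta\vec{m}_k(s)}{2}$, $\lnorm{|\nabla\vec{m}_k(s)|^2\vec{m}_k(s)}{2}$, $\lnorm{\vec{m}_k(s)\times\Delta\vec{m}_k(s)}{2}$ and $\lnorm{\vec{F}(\vec{m}_k(s),\nabla\vec{m}_k(s))}{2}$ uniformly in $s\in[0,T^\ast]$ and in $k$.

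Next I would control each of the four terms by $\hnorm{\vec{m}_k(s)}{2}$, which is uniformly bounded by Proposition \ref{prop:linf-h2}. The first term satisfies $\lnorm{\Delta\vec{m}_k}{2}\le\hnorm{\vec{m}_k}{2}$. For the second I would use $\lnorm{|\nabla\vec{m}_k|^2\vec{m}_k}{2}\le\lnorm{\vec{m}_k}{\infty}\lnorm{\nabla\vec{m}_k}{4}^2$ together with the embeddings $\HB^2(\Omega)\hookrightarrow\LB^\infty(\Omega)$ and $\HB^1(\Omega)\hookrightarrow\LB^4(\Omega)$, both valid for $d=1,2,3$, to obtain the bound $\hnorm{\vec{m}_k}{2}^3$. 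The third term is estimated by $\lnorm{\vec{m}_k}{\infty}\lnorm{\Delta\vec{m}_k}{2}\lsim\hnorm{\vec{m}_k}{2}^2$. The fourth is handled by Lemma \ref{lem:growth-condition}, since $\lnorm{\vec{F}(\vec{m}_k,\nabla\vec{m}_k)}{2}\le\hnorm{\vec{F}(\vec{m}_k,\nabla\vec{m}_k)}{1}\lsim 1+\hnorm{\vec{m}_k}{2}^\mu$. Summing these estimates and invoking Proposition \ref{prop:linf-h2} yields $\lnorm{\partial_t\vec{m}_k(s)}{2}^2\lsim 1$ uniformly in $s$ and $k$; the supremum over $s\in[0,t]$ is then legitimate because $\vec{m}_k$ is $C^1$ in time, being the solution of an ODE system in $V_k$ with smooth right-hand side (cf. Lemma \ref{lipschitz-lemma}).

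I do not expect a serious obstacle: the proposition is essentially a corollary of the a priori $\HB^2$ estimate of Proposition \ref{prop:linf-h2} together with the contractivity of $\Pi_k$ on $\LB^2(\Omega)$. The only points needing a little care are (i) verifying that $\Delta\vec{m}_k\in V_k$, so that the evolution equation holds as a pointwise-in-time identity in $V_k$ and nothing is lost when taking $\LB^2$ norms of the projected nonlinearities; and (ii) checking that the cubic term $|\nabla\vec{m}_k|^2\vec{m}_k$ is genuinely absorbed by $\hnorm{\vec{m}_k}{2}^3$ through the Sobolev embeddings available in dimension $d\le 3$. An alternative route would be to differentiate \eqref{galerkin-eqn} in time and test with $\partial_t\vec{m}_k$, but that is heavier than the stated estimate requires.
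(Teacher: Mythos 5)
Your proposal is correct and takes essentially the same route as the paper: the paper tests \eqref{galerkin-eqn} with $\boldsymbol\varphi=\partial_t\vec{m}_k$, applies Cauchy--Schwarz, and divides by $\lnorm{\partial_t\vec{m}_k}{2}$, which is the same as taking $\LB^2$ norms of the strong Galerkin identity \eqref{eq:proj-galerkin} and using the contractivity of $\Pi_k$ as you do, with identical term-by-term bounds via the Sobolev embeddings, Lemma~\ref{lem:growth-condition}, and Proposition~\ref{prop:linf-h2}.
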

\begin{proof}
Setting $\boldsymbol{\varphi} = \partial_t \vec{m}_k$ in (\ref{galerkin-eqn}) we obtain
\begin{align*}
    \lnorm{\partial_t \vec{m}_k}{2}^2 &\lsim \left|\liprod{\Delta \vec{m}_k}{\partial_t \vec{m}_k}\right| + \left|\liprod{|\nabla \vec{m}_k|^2 \vec{m}_k}{\partial_t \vec{m}_k}\right| \\
    &\qquad + \left|\liprod{\vec{m}_k \times \Delta \vec{m}_k}{\partial_t \vec{m}_k}\right| + \left|\liprod{\vec{F}(\vec{m}_k,\nabla \vec{m}_k)}{\partial_t \vec{m}_k}\right| \\[1ex]
    &\lsim \|\partial_t \vec{m}_k\|_{\LB^2(\Omega)} \Big(\|\Delta \vec{m}_k\|_{\LB^2(\Omega)} + \||\nabla \vec{m}_k|^2\vec{m}_k\|_{\LB^2(\Omega)} \\
    &\qquad + \|\vec{m}_k \times \Delta\vec{m}_k\|_{\LB^2(\Omega)} + \|\vec{F}(\vec{m}_k, \nabla \vec{m}_k)\|_{\LB^2(\Omega)}\Big).
\end{align*}
Thus,
\begin{align*}
    \lnorm{\partial_t \vec{m}_k}{2} &\lsim \|\Delta \vec{m}_k\|_{\LB^2(\Omega)} + \|\vec{m}_k\|_{\LB^{\infty}(\Omega)} \|\nabla \vec{m}_k\|_{\LB^4(\Omega)}^2 \\
    &\qquad + \|\vec{m}_k\|_{\LB^{\infty}(\Omega)} \|\Delta \vec{m}_k\|_{\LB^2(\Omega)} \\
    &\qquad + \|\vec{F}(\vec{m}_k, \nabla \vec{m}_k)\|_{\LB^2(\Omega)}.
\end{align*}
The result follows from the Sobolev embedding $\HB^1(\Omega) \hookrightarrow \LB^4(\Omega)$, Lemma \ref{lem:growth-condition} and Proposition \ref{prop:linf-h2}.
\end{proof}

\begin{proposition} \label{t-nabla-mk-proposition}
For each $k \in \N$ and $t \in [0,T^*]$,
\begin{equation*}
    \int_0^t \lnorm{\partial_s \nabla \vec{m}_k(s,\cdot)}{2}^2 \; \ds \lsim 1.
\end{equation*}
\end{proposition}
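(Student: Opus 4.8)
The plan is to test the Galerkin identity \eqref{galerkin-eqn} with $\boldsymbol\varphi = -\Delta\partial_t\vec{m}_k$. This is admissible: $\partial_t\vec{m}_k\in V_k$ and, since every $\boldsymbol\varphi_j$ is an eigenfunction of $-\Delta$, also $-\Delta\partial_t\vec{m}_k\in V_k$; moreover every element of $V_k$ inherits the homogeneous Neumann condition. Integrating by parts on the left-hand side gives
\begin{equation*}
\liprod{\partial_t\vec{m}_k}{-\Delta\partial_t\vec{m}_k} = \lnorm{\nabla\partial_t\vec{m}_k}{2}^2, \qquad \beta'\liprod{\nabla\vec{m}_k}{\nabla(-\Delta\partial_t\vec{m}_k)} = \frac{\beta'}{2}\ddt\lnorm{\Delta\vec{m}_k}{2}^2,
\end{equation*}
while on the right-hand side one more integration by parts moves a derivative onto $\partial_t\vec{m}_k$, so that each term becomes $\liprod{\nabla g_i}{\nabla\partial_t\vec{m}_k}$ with
\begin{equation*}
g_1 = \beta'|\nabla\vec{m}_k|^2\vec{m}_k, \qquad g_2 = \alpha'\vec{m}_k\times\Delta\vec{m}_k, \qquad g_3 = \vec{F}(\vec{m}_k,\nabla\vec{m}_k),
\end{equation*}
each lying in $\HB^1(\Omega)$ because $\vec{m}_k$ is smooth in space. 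Young's inequality bounds each of these by $\varepsilon\lnorm{\nabla\partial_t\vec{m}_k}{2}^2 + C_\varepsilon\lnorm{\nabla g_i}{2}^2$.

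Taking $\varepsilon$ small enough to absorb the $\lnorm{\nabla\partial_t\vec{m}_k}{2}^2$ terms on the left and integrating over $[0,t]$, I obtain
\begin{equation*}
\int_0^t\lnorm{\partial_s\nabla\vec{m}_k(s)}{2}^2\,\ds + \beta'\lnorm{\Delta\vec{m}_k(t)}{2}^2 \lesssim \lnorm{\Delta\vec{m}_k(0)}{2}^2 + \int_0^t\left(\lnorm{\nabla g_1(s)}{2}^2 + \lnorm{\nabla g_2(s)}{2}^2 + \lnorm{\nabla g_3(s)}{2}^2\right)\ds.
\end{equation*}
The initial term is harmless since $\vec{m}_k(0) = \Pi_k\vec{m}_0$ and $\Pi_k$ is the $\LB^2$-orthogonal projection onto eigenfunctions of $-\Delta$, whence $\lnorm{\Delta\vec{m}_k(0)}{2}\le\lnorm{\Delta\vec{m}_0}{2}$. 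So it remains to show that $s\mapsto\lnorm{\nabla g_i(s)}{2}^2$ is integrable on $[0,T^*]$ with a bound uniform in $k$. For $g_3$ this follows at once from Lemma \ref{lem:growth-condition} and the uniform $\HB^2$-bound of Proposition \ref{prop:linf-h2}, which yield $\lnorm{\nabla g_3}{2}\lesssim 1$. For $g_1$ and $g_2$ I expand the gradient — e.g.\ $\nabla g_2 = \alpha'(\nabla\vec{m}_k\times\Delta\vec{m}_k + \vec{m}_k\times\nabla\Delta\vec{m}_k)$ — and estimate using Hölder's inequality, the embedding $\HB^1(\Omega)\hookrightarrow\LB^6(\Omega)$, the $\LB^\infty$- and $\HB^2$-bounds of Proposition \ref{prop:linf-h2}, and the interpolation inequality \eqref{carbou-lemma:eqn 3}; this reduces every contribution to a bound of the form $\lnorm{\nabla g_i(s)}{2}^2\lesssim 1 + \lnorm{\nabla\Delta\vec{m}_k(s)}{2}^2$, whose integral over $[0,T^*]$ is controlled by the second estimate of Proposition \ref{prop:linf-h2}.

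The only step needing real care is the $g_2$ term: the integration by parts produces the third-order quantity $\vec{m}_k\times\nabla\Delta\vec{m}_k$, so the associated $\LB^2$-norm must be square-integrable in time rather than merely finite for a.e.\ $t$ — and this is precisely what $\int_0^{T^*}\hnorm{\vec{m}_k}{3}^2\,\ds\lesssim 1$ from Proposition \ref{prop:linf-h2} provides. The parallel point for $\nabla g_1$, where the Hessian norm $\lnorm{\nabla^2\vec{m}_k}{3}$ appears, is resolved by \eqref{carbou-lemma:eqn 3}, which exchanges that norm for $\lnorm{\nabla\Delta\vec{m}_k}{2}^{1/2}$, still square-integrable after squaring. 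All remaining manipulations are routine.
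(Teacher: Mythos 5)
Your proof follows essentially the same route as the paper's: both test the Galerkin identity with $-\Delta\partial_t\vec{m}_k$ (equivalently $-\partial_t\Delta\vec{m}_k$), integrate by parts to generate $\lnorm{\nabla\partial_t\vec{m}_k}{2}^2$ and $\frac{1}{2}\ddt\lnorm{\Delta\vec{m}_k}{2}^2$ on the left, move a gradient onto the test function on the right, use Young's inequality with small $\varepsilon$ to absorb the $\lnorm{\nabla\partial_t\vec{m}_k}{2}^2$ term, and then integrate in time and invoke Proposition \ref{prop:linf-h2} (in particular $\int_0^{T^*}\lnorm{\nabla\Delta\vec{m}_k}{2}^2\,\ds\lesssim 1$) to close the bound. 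Your write-up simply makes explicit some steps the paper compresses (the $\lnorm{\Delta\vec{m}_k(0)}{2}\le\lnorm{\Delta\vec{m}_0}{2}$ bound from the spectral projection, and the term-by-term estimates the paper dispatches as "similarly to $I_1,I_2,I_3$"); the underlying argument is identical.
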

\begin{proof}
Setting $\boldsymbol{\varphi} = -\partial_t \Delta \vec{m}_k$ in (\ref{galerkin-eqn}), we quickly see that
\begin{align} \label{time-deriv-h1-ineq}
    \begin{split}
        \lnorm{\partial_t \nabla \vec{m}_k}{2}^2 + \frac{1}{2} \ddt \lnorm{\Delta \vec{m}_k}{2}^2 &\lsim \left|\liprod{\nabla\left(|\nabla \vec{m}_k|^2 \vec{m}_k\right)}{\partial_t \nabla \vec{m}_k}\right| \\
        &\qquad + \left|\liprod{\nabla(\vec{m}_k \times \Delta \vec{m}_k)}{\partial_t \nabla \vec{m}_k}\right| \\
        &\qquad + \left|\liprod{\nabla \vec{g}_k}{\partial_t \nabla \vec{m}_k}\right| \\[1ex]
        &\lsim J_1 + J_2 + J_3,
    \end{split}
\end{align}
where
\begin{align*}
    J_1 &= \intomega{|\nabla\left(|\nabla \vec{m}_k|^2 \vec{m}_k\right) \cdot \partial_t \nabla \vec{m}_k|}, \\
    J_2 &= \intomega{|\nabla(\vec{m}_k \times \Delta \vec{m}_k) \cdot \partial_t \nabla \vec{m}_k|}, \\
    J_3 &= \intomega{|\nabla \vec{F}(\vec{m}_k, \nabla \vec{m}_k) \cdot \partial_t \nabla \vec{m}_k|}.
\end{align*}
These terms are bounded similarly to $I_1, I_2$ and $I_3$ in Proposition \ref{prop:linf-h2} with slight modifications to obtain
\begin{equation*}
    \lnorm{\partial_t \nabla \vec{m}_k}{2}^2 + \frac{1}{2} \ddt \lnorm{\Delta \vec{m}_k}{2}^2 \lsim 1 + \lnorm{\nabla \Delta \vec{m}_k}{2}^2 + \varepsilon^2 \lnorm{\partial_t \nabla \vec{m}_k}{2}^2.
\end{equation*}
For sufficiently small $\varepsilon$ we conclude that
\begin{align*}
    \int_0^t \lnorm{\partial_s \nabla \vec{m}_k(s,\cdot)}{2}^2 \; \ds + \lnorm{\Delta \vec{m}(T^*,\cdot)}{2}^2  &\lsim \lnorm{\Delta \vec{m}_k(0,\cdot)}{2}^2 + \int_0^t 1 + \lnorm{\nabla \Delta \vec{m}_k(s,\cdot)}{2}^2 \; \ds.
\end{align*}
The result follows from Proposition \ref{prop:linf-h2}.
\end{proof}

\subsection{Proof of Theorem \ref{exist-thm}}

It follows from Propositions \ref{prop:linf-h2}-\ref{t-nabla-mk-proposition} and the Banach-Alaoglu theorem that there exists a subsequence $\{\vec{m}_k\}$, still denoted by $\{\vec{m}_k\}$, such that
\begin{equation*}
    \begin{cases}
        \vec{m}_k \weakto \vec{m} & \text{in } L^2(0,T^*;\HB^3(\Omega)), \\[1ex]
        \vec{m}_k \wkstarto \vec{m} & \text{in } L^{\infty}(0,T^*;\HB^2(\Omega)), \\[1ex]
        \partial_t \vec{m}_k \weakto \partial_t \vec{m} & \text{in } L^2(0,T^*; \HB^1(\Omega)).
    \end{cases}
\end{equation*}
By Aubin's Lemma (Theorem \ref{aubins-lemma}), we conclude that
\begin{equation*}
    \vec{m}_k \to \vec{m} \text{ strongly in } L^2(0,T^*;\HB^2(\Omega)).
\end{equation*}
Then by Theorem \ref{time-continuity-theorem} we have
\begin{equation*}
    \vec{m} \in C([0,T^*]; \HB^2(\Omega)).
\end{equation*}

Having obtained these results, we can now show the convergence of the non-linear terms in (\ref{galerkin-eqn}).

\begin{lemma} \label{nonlin-convergence-proposition}
Let $T^* > 0$ be as defined in Proposition \ref{prop:linf-h2}. For any $\boldsymbol{\psi} \in C^{\infty}(\Omega)$ and each $t \in [0,T^*]$ we have
\begin{align*}
    \lim_{k \to \infty} \int_0^t \liprod{|\nabla \vec{m}_k|^2 \vec{m}_k}{\boldsymbol\psi} \; \ds &= \int_0^t \liprod{|\nabla \vec{m}|^2 \vec{m}}{\boldsymbol\psi} \; \ds \\
    \lim_{k \to \infty} \intOt{\liprod{\vec{m}_k \times \Delta \vec{m}_k}{\boldsymbol\psi}} &= \intOt{\liprod{\vec{m} \times \Delta \vec{m}}{\boldsymbol\psi}} \\
    \lim_{k \to \infty} \intOt{\liprod{\vec{F}(\vec{m}_k,\nabla \vec{m}_k)}{\boldsymbol\psi}} &= \intOt{\liprod{\vec{F}(\vec{m},\nabla \vec{m})}{\boldsymbol\psi}}
\end{align*}
\end{lemma}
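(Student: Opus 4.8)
The plan is to pass to the limit in each of the three nonlinear terms separately, exploiting the strong convergence $\vec{m}_k \to \vec{m}$ in $L^2(0,T^*;\HB^2(\Omega))$ together with the weak convergence $\vec{m}_k \weakto \vec{m}$ in $L^2(0,T^*;\HB^3(\Omega))$ and the uniform bounds from Proposition~\ref{prop:linf-h2}. For each term I would write the difference as a telescoping sum in which one factor carries a difference $\vec{m}_k - \vec{m}$ and the remaining factors are controlled uniformly in $k$; Hölder's inequality together with the relevant Sobolev embeddings ($\HB^2(\Omega)\hookrightarrow\LB^\infty(\Omega)$, $\HB^1(\Omega)\hookrightarrow\LB^6(\Omega)$ for $d\le 3$) then reduces everything to a factor tending to zero.

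First I would handle the term $\int_0^t\liprod{\vec{m}_k\times\Delta\vec{m}_k}{\boldsymbol\psi}\,\ds$. Splitting
\[
\vec{m}_k\times\Delta\vec{m}_k - \vec{m}\times\Delta\vec{m}
= (\vec{m}_k - \vec{m})\times\Delta\vec{m}_k + \vec{m}\times\Delta(\vec{m}_k - \vec{m}),
\]
the first piece is bounded by $\lnorm{\vec{m}_k-\vec{m}}{\infty}\lnorm{\Delta\vec{m}_k}{2}\lnorm{\boldsymbol\psi}{2}$, integrated in time, and since $\lnorm{\vec{m}_k-\vec{m}}{\infty}\lsim\hnorm{\vec{m}_k-\vec{m}}{2}\to 0$ in $L^2(0,t)$ while $\lnorm{\Delta\vec{m}_k}{2}$ is uniformly bounded, this goes to zero. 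For the second piece I would integrate by parts (or simply note $\vec{m}\times\Delta(\vec{m}_k-\vec{m})$ tested against $\boldsymbol\psi$ equals $(\vec{m}_k-\vec{m})$ tested against a smooth combination of $\boldsymbol\psi,\nabla\boldsymbol\psi,\nabla\vec{m},\nabla^2\vec{m}$ after moving one derivative), so that strong $L^2(0,t;\HB^1)$ convergence of $\vec{m}_k-\vec{m}$ suffices; alternatively, one uses the weak $L^2(0,T^*;\HB^3)$ convergence directly against the fixed $L^2$-in-time function $t\mapsto\vec{m}(t)\times\boldsymbol\psi$ — but that requires knowing $\vec{m}\in L^\infty(0,T^*;\HB^2)$, which we have. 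For the term $\int_0^t\liprod{|\nabla\vec{m}_k|^2\vec{m}_k}{\boldsymbol\psi}\,\ds$ I would telescope as
\[
|\nabla\vec{m}_k|^2\vec{m}_k - |\nabla\vec{m}|^2\vec{m}
= (\nabla\vec{m}_k-\nabla\vec{m})\!:\!(\nabla\vec{m}_k+\nabla\vec{m})\,\vec{m}_k + |\nabla\vec{m}|^2(\vec{m}_k-\vec{m}),
\]
and estimate in $\LB^1(\Omega)$ against $\boldsymbol\psi\in\LB^\infty$: the first term is $\lsim\lnorm{\nabla\vec{m}_k-\nabla\vec{m}}{2}\big(\lnorm{\nabla\vec{m}_k}{4}+\lnorm{\nabla\vec{m}}{4}\big)\lnorm{\vec{m}_k}{4}$ with all but the first factor uniformly bounded via $\HB^1\hookrightarrow\LB^4$, hence the time integral vanishes by strong $L^2(0,t;\HB^1)$ convergence; the second term is handled the same way. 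Finally, the $\vec{F}$ term is immediate from Lemma~\ref{lem:locally-lipschitz}: the uniform $\HB^2$ bound puts all $\vec{m}_k$ and $\vec{m}$ in a fixed ball $B_R(\vec{0})$, so
\[
\Big|\intOt{\liprod{\vec{F}(\vec{m}_k,\nabla\vec{m}_k)-\vec{F}(\vec{m},\nabla\vec{m})}{\boldsymbol\psi}}\Big|
\lsim \lnorm{\boldsymbol\psi}{2}\intOt{\hnorm{\vec{m}_k-\vec{m}}{1}} \to 0.
\]

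The main obstacle is the $\vec{m}_k\times\Delta\vec{m}_k$ term, because $\Delta\vec{m}_k$ is only bounded in $L^2(0,T^*;\HB^1)$, not convergent strongly, so naively one only gets weak convergence of the product and must be careful that the limit is the right one; the resolution is that the factor $\vec{m}_k$ (or $\vec{m}_k-\vec{m}$) converges strongly in a space ($C(\overline\Omega)$, uniformly in $t$ after integrating in time, or $L^\infty(0,T^*;\LB^\infty)$ up to the $\HB^2$ embedding) strong enough to pair with the weakly convergent $\Delta\vec{m}_k$ — i.e. one needs a strong $\times$ weak product lemma, and the strong convergence in $L^2(0,T^*;\HB^2)$ from Aubin's lemma is exactly what makes this work. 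Everything else is a routine Hölder-and-embedding exercise.
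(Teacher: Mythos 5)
Your proposal follows essentially the same route as the paper: telescope each nonlinear difference so one factor carries $\vec{m}_k - \vec{m}$, bound the remaining factors uniformly via Proposition~\ref{prop:linf-h2} and the Sobolev embeddings $\HB^2\hookrightarrow\LB^\infty$, $\HB^1\hookrightarrow\LB^4$, and let the strong $L^2(0,T^*;\HB^2(\Omega))$ convergence from Aubin's lemma kill the difference; the $\vec{F}$ term is dispatched by Lemma~\ref{lem:locally-lipschitz} exactly as you say.

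The one point worth flagging is your final paragraph, where you identify the $\vec{m}_k\times\Delta\vec{m}_k$ term as the main obstacle ``because $\Delta\vec{m}_k$ is only bounded in $L^2(0,T^*;\HB^1)$, not convergent strongly.'' This is a misstatement: since $\vec{m}_k\to\vec{m}$ strongly in $L^2(0,T^*;\HB^2(\Omega))$, it follows immediately that $\Delta\vec{m}_k\to\Delta\vec{m}$ strongly in $L^2(0,T^*;\LB^2(\Omega))$. The paper exploits exactly this: for the piece $\vec{m}\times\Delta(\vec{m}_k-\vec{m})$ it just writes
\begin{equation*}
\intOt{\lnorm{\vec{m}\times\Delta(\vec{m}_k-\vec{m})}{2}^2}
\lsim \intOt{\lnorm{\vec{m}}{\infty}^2\lnorm{\Delta\vec{m}_k-\Delta\vec{m}}{2}^2}
\lsim \intOt{\hnorm{\vec{m}_k-\vec{m}}{2}^2}\to 0,
\end{equation*}
with no integration by parts, no appeal to the weak $L^2(0,T^*;\HB^3)$ convergence, and no strong-times-weak product lemma. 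Your two alternative arguments for this piece would also work, but they are heavier than needed; the misdiagnosis of the difficulty led you to a more elaborate path when the direct Hölder estimate already closes the argument.
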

\begin{proof}
For the first result, we see that

\begin{align*}
    &\left|\int_0^t \liprod{|\nabla \vec{m}_k|^2 \vec{m}_k}{\boldsymbol\psi} \; \ds - \int_0^t \liprod{|\nabla \vec{m}|^2 \vec{m}}{\boldsymbol\psi} \; \ds\right| \\[1ex]
    &\qquad \lsim \left(\intOt{\lnorm{|\nabla \vec{m}_k|^2 \vec{m}_k - |\nabla \vec{m}|^2 \vec{m}}{2}^2}\right)^{1/2} \\[1ex]
    &\qquad \lsim \left(\intOt{\lnorm{|\nabla \vec{m}_k|^2 \vec{m}_k - |\nabla \vec{m}_k|^2 \vec{m}}{2}^2}\right)^{1/2} \\
    &\qquad \qquad + \left(\intOt{\lnorm{|\nabla \vec{m}_k|^2 \vec{m} - |\nabla \vec{m}|^2 \vec{m}}{2}^2}\right)^{1/2}.
\end{align*}

Now we proceed by showing that the first integral tends to zero using Proposition \ref{prop:linf-h2} and strong convergence in $L^2(0,T^*; \HB^2(\Omega))$. That is,
\begin{align*}
    \intOt{\lnorm{|\nabla \vec{m}_k|^2 \vec{m}_k - |\nabla \vec{m}_k|^2 \vec{m}}{2}^2} &\lsim \intOt{\lnorm{\vec{m}_k - \vec{m}}{\infty}^2 \lnorm{\nabla \vec{m}_k}{4}^4}\\
    &\lsim \intOt{\hnorm{\vec{m}_k - \vec{m}}{2}^2} \to 0,
\end{align*}
Similarly for the second term,
\begin{align*}
    &\intOt{\lnorm{|\nabla \vec{m}_k|^2 \vec{m} - |\nabla \vec{m}|^2 \vec{m}}{2}^2} \\[1ex]
    &\qquad \lsim \int_0^t \lnorm{\vec{m}}{\infty}^2 \intomega{|\nabla \vec{m}_k - \nabla \vec{m}|^2 \left(|\nabla \vec{m}_k|^2 + |\nabla \vec{m}|^2\right)} \\
    &\qquad \lsim \intOt{\lnorm{\nabla \vec{m}_k - \nabla \vec{m}}{4}^2 \lnorm{\nabla \vec{m}_k}{4}^2} + \intOt{\lnorm{\nabla \vec{m}_k - \nabla \vec{m}}{4}^2 \lnorm{\nabla \vec{m}}{4}^2} \\
    &\qquad \lsim \intOt{\lnorm{\nabla \vec{m}_k - \nabla \vec{m}}{4}^2}, \\
    &\qquad \lsim \intOt{\hnorm{\vec{m}_k - \vec{m}}{2}^2} \to 0.
\end{align*}
The first result now follows.

As the method remains the same with slight modifications, we omit the some of the more obvious steps for the next two results. For the second result,
\begin{align*}
    \Bigg|\intOt{\liprod{\vec{m}_k \times \Delta \vec{m}_k}{\boldsymbol\psi}} &- \intOt{\liprod{\vec{m} \times \Delta \vec{m}}{\boldsymbol\psi}}\Bigg| \\
    &\lsim \left(\intOt{\lnorm{\vec{m}_k \times \Delta \vec{m}_k - \vec{m} \times \Delta \vec{m}_k}{2}^2}\right)^{1/2} \\
    &\qquad + \left(\intOt{\lnorm{\vec{m} \times \Delta \vec{m}_k - \vec{m} \times \Delta \vec{m}}{2}^2}\right)^{1/2}.
\end{align*}

It is easy to see that the these two integral tend to zero as
\begin{align*}
    \intOt{\lnorm{\vec{m}_k \times \Delta \vec{m}_k - \vec{m} \times \Delta \vec{m}_k}{2}^2} &\lsim \intOt{\lnorm{\vec{m}_k -\vec{m}}{\infty}^2 \lnorm{\Delta \vec{m}_k}{2}^2} \\
    &\lsim \intOt{ \hnorm{\vec{m}_k - \vec{m}}{2}^2} \to 0,
\end{align*}
and
\begin{align*}
    \intOt{\lnorm{\vec{m} \times \Delta \vec{m}_k - \vec{m} \times \Delta \vec{m}}{2}^2} &\lsim \intOt{\lnorm{\vec{m}}{\infty}^2 \lnorm{\Delta \vec{m}_k - \Delta \vec{m}}{2}^2} \\
    &\lsim \intOt{\hnorm{\vec{m}_k - \vec{m}}{2}^2} \to 0,
\end{align*}
proving the second result.

For the non-homogeneous term, we first have that
\begin{align*}
    \Bigg|\intOt{\liprod{\vec{F}(\vec{m}_k,\nabla \vec{m}_k)}{\boldsymbol\psi}} &- \intOt{\liprod{\vec{F}(\vec{m},\nabla \vec{m})}{\boldsymbol\psi}}\Bigg| \\
    &\lsim \left(\intOt{\lnorm{\vec{F}(\vec{m}_k,\nabla \vec{m}_k) - \vec{F}(\vec{m},\nabla \vec{m})}{2}^2}\right)^{1/2}.
\end{align*}
But since $\vec{m}_k$ and $\vec{m}$ belong to a bounded set in $\HB^2(\Omega)$, applying Lemma \ref{lem:locally-lipschitz} gives
\begin{align*}
    \intOt{\lnorm{\vec{F}(\vec{m}_k,\nabla \vec{m}_k) - \vec{F}(\vec{m},\nabla \vec{m})}{2}^2} &\lsim \intOt{\hnorm{\vec{m}_k - \vec{m}}{1}^2} \\
    &\lsim \intOt{\hnorm{\vec{m}_k - \vec{m}}{2}^2} \to 0,
\end{align*}
giving us the third result.
\end{proof}

Consequently, $\vec{m}$ satisfies the weak formulation in Definition \ref{def-weak-solution-theoretical}. Next, we note that the solution $\vec{m}$ satisfies the Neumann boundary condition. Indeed, by using the Trace Theorem, we have
\begin{align*}
    \int_0^t \left\|\frac{\partial \vec{m}}{\partial\vec{n}}\right\|_{\LB^2(\partial\Omega)}^2 \; \ds &\lsim \int_0^t \left\|\frac{\partial \vec{m}}{\partial\vec{n}} - \frac{\partial \vec{m}_k}{\partial\vec{n}}\right\|_{\LB^2(\partial\Omega)}^2 \; \ds + \int_0^t \left\|\frac{\partial \vec{m}_k}{\partial\vec{n}}\right\|_{\LB^2(\partial\Omega)}^2 \; \ds \\
    &= \int_0^t \left\|\frac{\partial \vec{m}}{\partial\vec{n}} - \frac{\partial \vec{m}_k}{\partial\vec{n}}\right\|_{\LB^2(\partial\Omega)}^2 \; \ds \\
    &\lsim \; \int_0^t \hnorm{\vec{m} - \vec{m}_k}{2}^2 \; \ds \to 0.
\end{align*}

Finally, by following the proof in \cite{carbou2001-article}, using the orthogonality property of $\vec{F}$ i.e., Assumption (A2), we conclude that $\vec{m}$ has magnitude $|\vec{m}| = 1$. Moreover, because $\vec{m}_k(0,\cdot)$ is the projection of $\vec{m}_0$ onto $V_k$, we have $\vec{m}(0,\cdot) = \vec{m}_0$. Therefore $\vec{m}$ is a solution in the sense of Definition \ref{def-weak-solution-theoretical}, completing the proof of Theorem \ref{exist-thm}.

\subsection{Proof of Theorem \ref{unique-thm}}

Suppose $\vec{m}_1$ and $\vec{m}_2$ are two solutions obtained from Theorem \ref{exist-thm} with initial data $\vec{m}_{0,1}$ and $\vec{m}_{0,2}$, respectively. Let $T' = \min(T_1^*, T_2^*)$ and $\vec{v} = \vec{m}_1 - \vec{m}_2$. Then $\vec{v}$ solves the following PDE almost everywhere on $(0,T') \times \Omega$:
\begin{align*}
    \partial_t \vec{v} - \beta'\Delta \vec{v} &= \beta'|\nabla \vec{m}_1|^2 \vec{v} + \beta'(|\nabla \vec{m}_1|^2 - |\nabla \vec{m}_2|^2) \vec{m}_2 + \alpha'\vec{v} \times \Delta \vec{m}_1 + \alpha'\vec{m}_2 \times \Delta \vec{v} \\
    &\qquad + \vec{F}(\vec{m}_1, \nabla \vec{m}_2) - \vec{F}(\vec{m}_2, \nabla \vec{m}_2).
\end{align*}
Taking the dot product with $\vec{v}$, we obtain
\begin{equation} \label{diff-inner-prod}
    \frac{1}{2} \ddt \lnorm{\vec{v}}{2}^2 + \beta'\lnorm{\nabla \vec{v}}{2}^2 = \alpha'K_1 + \beta'K_2 + \beta'K_3 + K_4,
\end{equation}
where
\begin{align*}
    K_1 &= \intomega{(\vec{m}_2 \times \Delta \vec{v}) \cdot \vec{v}}, \\
    K_2 &= \intomega{|\nabla \vec{m}_1|^2 \vec{v} \cdot \vec{v}}, \\
    K_3 &= \intomega{(|\nabla \vec{m}_1|^2 - |\nabla \vec{m}_2|^2) \vec{m}_2 \cdot \vec{v}}, \\
    K_4 &= \intomega{(\vec{F}(\vec{m}_1, \nabla \vec{m}_1) - \vec{F}(\vec{m}_2, \nabla \vec{m}_2)) \cdot \vec{v}}.
\end{align*}
We now bound each of these integrals. We obtain by using integration by parts, Holder's inequality, and Sobolev embedding
\begin{equation*}
    |K_1| \lsim \lnorm{\nabla \vec{m}_2}{\infty} \lnorm{\nabla \vec{v}}{2} \lnorm{\vec{v}}{2}
    \lsim \hnorm{\vec{m}_2}{3} \lnorm{\nabla \vec{v}}{2} \lnorm{\vec{v}}{2}
\end{equation*}
The bound for $K_2$ is straightforward and is just
\begin{equation*}
    |K_2| \leq \lnorm{\nabla \vec{m}_1}{\infty}^2 \lnorm{\vec{v}}{2}^2 \lsim \hnorm{\vec{m}_1}{3}^2 \lnorm{\vec{v}}{2}^2
\end{equation*}
For~$K_3$, by using $|\vec{a}|^2 - |\vec{b}|^2 = (|\vec{a}| - |\vec{b}|)(|\vec{a}| + |\vec{b}|)$ we have
\begin{align*}
    |K_3| &\lsim \left(\lnorm{\nabla \vec{m}_1}{\infty} + \lnorm{\nabla \vec{m}_2}{\infty}\right) \lnorm{\nabla \vec{v}}{2} \lnorm{\vec{v}}{2} \\
    &\lsim \left(\hnorm{\vec{m}_1}{3} + \hnorm{\vec{m}_2}{3}\right) \lnorm{\nabla \vec{v}}{2} \lnorm{\vec{v}}{2}.
\end{align*}
Finally, by using Lemma \ref{lem:locally-lipschitz} we have
\begin{align*}
    |K_4| &\leq \lnorm{\vec{F}(\vec{m}_1, \nabla \vec{m}_1) - \vec{F}(\vec{m}_2, \nabla \vec{m}_2)}{2} \lnorm{\vec{v}}{2} \\
    &\lsim \hnorm{\vec{m}_1 - \vec{m}_2}{1} \lnorm{\vec{v}}{2} \\
    &\lsim \lnorm{\vec{v}}{2}^2 + \lnorm{\nabla \vec{v}}{2} \lnorm{\vec{v}}{2}.
\end{align*}
Therefore, it follows from~\eqref{diff-inner-prod} that
\begin{align*}
    \frac{1}{2} \ddt \lnorm{\vec{v}}{2}^2 + \beta'\lnorm{\nabla \vec{v}}{2}^2 &\lsim \hnorm{\vec{m}_2}{3} \lnorm{\nabla \vec{v}}{2} \lnorm{\vec{v}}{2} + \hnorm{\vec{m}_1}{3}^2 \lnorm{\vec{v}}{2}^2 \\
    &\qquad+ \left(\hnorm{\vec{m}_1}{3} + \hnorm{\vec{m}_2}{3}\right) \lnorm{\nabla \vec{v}}{2} \lnorm{\vec{v}}{2} \\
    &\qquad+ \lnorm{\vec{v}}{2}^2 + \lnorm{\nabla \vec{v}}{2} \lnorm{\vec{v}}{2}.
\end{align*}
Using Young's inequality, we can absorb the term involving $\lnorm{\nabla \vec{v}}{2}^2$ to the corresponding term on the left hand side and obtain
\begin{align*}
    \ddt \lnorm{\vec{v}}{2}^2 + \lnorm{\nabla \vec{v}}{2}^2 &\lsim \left(\hnorm{\vec{m}_1}{3}^2 + \hnorm{\vec{m}_2}{3}^2 + 1\right) \lnorm{\vec{v}}{2}^2.
\end{align*}
As $\vec{m}_1, \vec{m}_2 \in L^2(0,T'; \HB^3(\Omega))$, the conclusion follows from the Gronwall Lemma.

\section{Numerical Method} \label{sec:numerical-method}

It follows from \eqref{eq:llg-numerical-pde}, that for all $\boldsymbol{\psi} \in C^{\infty}_0(\Omega_T)$
\begin{align} \label{eq:llg-numerical-pde-weak-formulation}
\begin{split}
    \beta \liprodt{\partial_t \vec{m}}{\boldsymbol{\psi}} - \alpha \liprodt{\vec{m} \times \partial_t \vec{m}}{\boldsymbol{\psi}} &= \alpha \liprodt{|\nabla \vec{m}|^2 \vec{m}}{\boldsymbol{\psi}} - \alpha \liprodt{\nabla \vec{m}}{\nabla \boldsymbol{\psi}} \\
    &\quad - \liprodt{\vec{m} \times \vec{f}\left(\vec{m}, \nabla \vec{m}\right)}{\boldsymbol{\psi}}.
\end{split}
\end{align}
We will use the finite element method to approximate the solution~$\vec{m}$ of this equatiaon. We define the finite element space $\V_h \subset \HB^1(\Omega)$ to be the space of all continuous piecewise linear functions on a triangulation of $\Omega$. Let $\{\phi_n\}_{n=1}^N$ be the hat functions satisfying $\phi_n(\vec{x}_m) = \delta_{n,m}$, where $\delta$ is the Kronecker delta. Thus we can define the interpolation operator to be
\begin{equation*}
    I_{\V_h}(\vec{u}) = \sum_{n=1}^N \vec{u}(\vec{x}_n) \phi_n(\vec{x}),
\end{equation*}
for all $\vec{u} \in C(\Omega; \R^3)$. Let $k$ denote the time-step and $j$ the increment of time i.e. $t_j = jk$. Denote $\femvec{m}{h}{j} \in \V_h$ to be approximation of $\vec{m}(t_j,\cdot)$ and $\femvec{v}{h}{j}$ to be the approximation for $\partial_t \vec{m}(t_j,\cdot)$. Inspired by the fact that $\vec{m} \cdot \partial_t \vec{m} = 0$, we search for $\femvec{v}{h}{j}$ in the space
\begin{equation*}
    \W_h^{(j)} := \left\{\vec{w} \in \V_h : \vec{w}(\vec{x}_n) \cdot \femvec{m}{h}{j}(\vec{x}_n) = 0, \; 1 \leq n \leq N\right\}.
\end{equation*}
Using the formulation \eqref{eq:llg-numerical-pde-weak-formulation} we extend the algorithm in \cite{alouges2008-article} to get the following.

\subsection*{Algorithm 3.1}
Choose $\theta \in (\frac{1}{2}, 1]$ and a time-step size $k = \frac{T}{J}$ with $J \in \N$. \\
\noindent \textbf{Step 1}: Set $j = 0$. Choose $\femvec{m}{h}{0} = I_{\V_h} (\vec{m}_0)$. \\
\noindent \textbf{Step 2}: Find $\femvec{v}{h}{j} \in \W_h^{(j)}$ such that for all $\boldsymbol{\psi} \in \W_h^{(j)}$
\begin{align} \label{section-3-fem-equation}
    \begin{split}
        \beta \liprod{\femvec{v}{h}{j}}{\boldsymbol\psi} - \liprod{\femvec{m}{h}{j} \times \femvec{v}{h}{j}}{\boldsymbol{\psi}} &= -\alpha \liprod{\nabla \left(\femvec{m}{h}{j} + \theta k \femvec{v}{h}{j}\right)}{\nabla \boldsymbol{\psi}} \\
        &\qquad - \liprod{\femvec{m}{h}{j} \times \vec{f}\left(\femvec{m}{h}{j}, \nabla \femvec{m}{h}{j}\right)}{\boldsymbol{\psi}}.
    \end{split}
\end{align}
\noindent \textbf{Step 3}: Define
\begin{equation*}
    \femvec{m}{h}{j+1}(\vec{x}) := \sum_{n=1}^N \frac{\femvec{m}{h}{j}(\vec{x}_n) + k \femvec{v}{h}{j}(\vec{x}_n)}{\left|\femvec{m}{h}{j}(\vec{x}_n) + k \femvec{v}{h}{j}(\vec{x}_n)\right|} \phi_n(\vec{x}).
\end{equation*}
\noindent \textbf{Step 4}: Set $j = j+1$, and return to Step 2 if $j < J$. Stop if $j = J$. \\

\begin{theorem}[Numerical Convergence] \label{thm:numerical}
Let $T > 0$ and assume that $\vec{m}_0 \in \HB^1(\Omega)$ satisfies $|\vec{m}_0| \equiv 1$ and that $\vec{f}$ satisfies assumptions (A1) and (A2). Suppose that $\femvec{m}{h}{0} \to \vec{m}_0$ in $\HB^1(\Omega)$ as $h \to 0$, and $\theta \in (\frac{1}{2}, 1]$. If $d=2,3$ and the triangulation $\T_h$ satisfies (\ref{section-2-bartels-condition}) then as $(h,k) \to (0,0)$, $\vec{m}_{h,k}$ converges weakly in $\HB^1(\Omega_T)$ and strongly in $\LB^2(\Omega_T)$ to a weak solution of \eqref{eq:llg-numerical-pde-weak-formulation}.
\end{theorem}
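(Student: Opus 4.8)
The plan is to adapt the compactness analysis of Alouges' tangent-plane scheme to accommodate the extra torque term $\vec{f}$. First I would record that Algorithm 3.1 is well defined: for fixed $\femvec{m}{h}{j}$ the bilinear form $(\vec{w},\boldsymbol\psi)\mapsto\beta\liprod{\vec{w}}{\boldsymbol\psi}-\liprod{\femvec{m}{h}{j}\times\vec{w}}{\boldsymbol\psi}+\alpha\theta k\liprod{\nabla\vec{w}}{\nabla\boldsymbol\psi}$ on $\W_h^{(j)}\times\W_h^{(j)}$ is bounded and coercive (the cross term vanishes on the diagonal), so Lax--Milgram gives a unique $\femvec{v}{h}{j}\in\W_h^{(j)}$; since $\femvec{v}{h}{j}(\vec{x}_n)\perp\femvec{m}{h}{j}(\vec{x}_n)$ one has $|\femvec{m}{h}{j}(\vec{x}_n)+k\femvec{v}{h}{j}(\vec{x}_n)|=(1+k^2|\femvec{v}{h}{j}(\vec{x}_n)|^2)^{1/2}\ge1$, so Step 3 makes sense and, inductively, $|\femvec{m}{h}{j}(\vec{x}_n)|=1$ at every node, whence $|\femvec{m}{h}{j}|\le1$ on $\Omega$. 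The crucial step is the a priori estimate from $\boldsymbol\psi=\femvec{v}{h}{j}$ in \eqref{section-3-fem-equation}: I would use \eqref{section-2-bartels-condition} so the renormalisation in Step 3 does not increase the Dirichlet energy, $\lnorm{\nabla\femvec{m}{h}{j+1}}{2}\le\lnorm{\nabla(\femvec{m}{h}{j}+k\femvec{v}{h}{j})}{2}$, and bound the torque term by the growth condition of (A2), which together with the linearity of $\vec{g}_1,\vec{g}_2$ and $|\femvec{m}{h}{j}|\le1$ yields $\lnorm{\vec{f}(\femvec{m}{h}{j},\nabla\femvec{m}{h}{j})}{2}\lsim1+\lnorm{\nabla\femvec{m}{h}{j}}{2}$; since $\theta>\tfrac12$, Young's inequality and discrete Gronwall then give, uniformly in $h,k$,
\[
\sup_j\hnorm{\femvec{m}{h}{j}}{1}\lsim1,\qquad \sum_j k\lnorm{\femvec{v}{h}{j}}{2}^2\lsim1,\qquad \sum_j k^2\lnorm{\nabla\femvec{v}{h}{j}}{2}^2\lsim1 .
\]

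Next I would introduce the time-interpolants: $\vec{m}_{h,k}$ continuous and piecewise linear in $t$ with nodal values $\femvec{m}{h}{j}$, the piecewise constant $\vec{m}_{h,k}^{\pm}$ (values $\femvec{m}{h}{j}$, $\femvec{m}{h}{j+1}$) and $\vec{v}_{h,k}$ (value $\femvec{v}{h}{j}$ on $(t_j,t_{j+1})$). Because $\femvec{m}{h}{j}(\vec{x}_n),\femvec{m}{h}{j+1}(\vec{x}_n)$ are unit vectors with inner product $(1+k^2|\femvec{v}{h}{j}(\vec{x}_n)|^2)^{-1/2}$, one has the nodal bounds $|\femvec{m}{h}{j+1}(\vec{x}_n)-\femvec{m}{h}{j}(\vec{x}_n)|\le k|\femvec{v}{h}{j}(\vec{x}_n)|$ and $|\femvec{m}{h}{j+1}(\vec{x}_n)-\femvec{m}{h}{j}(\vec{x}_n)-k\femvec{v}{h}{j}(\vec{x}_n)|\lsim k^2|\femvec{v}{h}{j}(\vec{x}_n)|^2$; the first shows $\partial_t\vec{m}_{h,k}$ is bounded in $\LB^2(\Omega_T)$, so $\vec{m}_{h,k}$ is bounded in $\HB^1(\Omega_T)$ and in $\LB^\infty(\Omega_T)$ (with $|\vec{m}_{h,k}|\le1$), while the second gives $\|\vec{v}_{h,k}-\partial_t\vec{m}_{h,k}\|_{\LB^1(\Omega_T)}\lsim k$. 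Passing to a subsequence, Aubin's compactness lemma gives $\vec{m}_{h,k}\to\vec{m}$ strongly in $\LB^2(\Omega_T)$ (hence a.e.) and $\vec{m}_{h,k}^{\pm}\to\vec{m}$ in $\LB^2(\Omega_T)$, together with $\vec{m}_{h,k},\vec{m}_{h,k}^{\pm}\weakto\vec{m}$ in $\LB^2(0,T;\HB^1(\Omega))$, $\partial_t\vec{m}_{h,k}\weakto\partial_t\vec{m}$ in $\LB^2(\Omega_T)$, and $\vec{v}_{h,k}\weakto\partial_t\vec{m}$ in $\LB^2(\Omega_T)$ by the $\LB^1$-closeness.

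For $\boldsymbol\phi\in C^\infty_0(\Omega_T;\R^3)$ I would test \eqref{section-3-fem-equation} on $(t_j,t_{j+1})$ with $\boldsymbol\psi_{h,k}(\cdot,t):=I_{\V_h}\!\big(\boldsymbol\phi(\cdot,t)\times\vec{m}_{h,k}^{-}(\cdot,t)\big)$, which lies in $\W_h^{(j)}$ because $\boldsymbol\psi_{h,k}(\vec{x}_n,t)=\boldsymbol\phi(\vec{x}_n,t)\times\femvec{m}{h}{j}(\vec{x}_n)\perp\femvec{m}{h}{j}(\vec{x}_n)$, and sum over $j$. Since $\vec{m}_{h,k}^{-}$ is affine on each element, interpolation estimates give $\boldsymbol\psi_{h,k}\to\boldsymbol\phi\times\vec{m}$ strongly in $\LB^2(\Omega_T)$, $\boldsymbol\psi_{h,k}$ bounded in $\LB^2(0,T;\HB^1(\Omega))\cap\LB^\infty(\Omega_T)$, and $\nabla\boldsymbol\psi_{h,k}=\nabla\boldsymbol\phi\times\vec{m}_{h,k}^{-}+\boldsymbol\phi\times\nabla\vec{m}_{h,k}^{-}+O(h)$ in $\LB^2(\Omega_T)$. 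Then the term $\alpha\theta k\,\liprodt{\nabla\vec{v}_{h,k}}{\nabla\boldsymbol\psi_{h,k}}$ is $O(k^{1/2})$ by the third a priori bound; in the Gilbert terms I replace $\vec{v}_{h,k}$ by $\partial_t\vec{m}_{h,k}$ up to $o(1)$ (difference $O(k)$ in $\LB^1$, with $\boldsymbol\psi_{h,k}$ and $\vec{m}_{h,k}^{-}\times\boldsymbol\psi_{h,k}$ bounded in $\LB^\infty$) and pass to the limit through weak--strong $\LB^2$ products, where $\vec{m}_{h,k}^{-}\to\vec{m}$ strongly with $|\vec{m}_{h,k}^{-}|\le1$ is what makes $\vec{m}_{h,k}^{-}\times\partial_t\vec{m}_{h,k}\weakto\vec{m}\times\partial_t\vec{m}$; in the torque term, continuity of the $\vec{f}_i$ with $|\vec{m}_{h,k}^{-}|\le1$ gives $\vec{f}_i(\vec{m}_{h,k}^{-})\to\vec{f}_i(\vec{m})$ strongly in every $\LB^p$ and linearity of $\vec{g}_1,\vec{g}_2$ gives $\vec{g}_j(\nabla\vec{m}_{h,k}^{-})\weakto\vec{g}_j(\nabla\vec{m})$ in $\LB^2$, so $\vec{f}(\vec{m}_{h,k}^{-},\nabla\vec{m}_{h,k}^{-})\weakto\vec{f}(\vec{m},\nabla\vec{m})$ in $\LB^2(\Omega_T)$ and that term converges. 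The delicate term is $\alpha\,\liprodt{\nabla\vec{m}_{h,k}^{-}}{\nabla\boldsymbol\psi_{h,k}}$: after the $O(h)$-reduction, the contribution of $\boldsymbol\phi\times\nabla\vec{m}_{h,k}^{-}$ equals $\liprodt{\nabla\vec{m}_{h,k}^{-}}{\boldsymbol\phi\times\nabla\vec{m}_{h,k}^{-}}=0$ pointwise (the triple product $\vec{a}\cdot(\vec{b}\times\vec{a})$ vanishes), while in $\liprodt{\nabla\vec{m}_{h,k}^{-}}{\nabla\boldsymbol\phi\times\vec{m}_{h,k}^{-}}$ only $\nabla\vec{m}_{h,k}^{-}$ converges weakly and $\vec{m}_{h,k}^{-}$ strongly, so it passes to $\liprodt{\nabla\vec{m}}{\nabla(\boldsymbol\phi\times\vec{m})}$. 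Collecting terms,
\[
\beta\,\liprodt{\partial_t\vec{m}}{\boldsymbol\phi\times\vec{m}}-\liprodt{\vec{m}\times\partial_t\vec{m}}{\boldsymbol\phi\times\vec{m}}+\alpha\,\liprodt{\nabla\vec{m}}{\nabla(\boldsymbol\phi\times\vec{m})}+\liprodt{\vec{m}\times\vec{f}(\vec{m},\nabla\vec{m})}{\boldsymbol\phi\times\vec{m}}=0
\]
for all $\boldsymbol\phi\in C^\infty_0(\Omega_T;\R^3)$.

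It remains to verify $|\vec{m}|=1$, identify the limit equation, and check the initial condition. The bound $|\vec{m}|\le1$ is inherited from $|\vec{m}_{h,k}|\le1$; for the reverse I would use $1-|\femvec{m}{h}{j}|^2=\tfrac12\sum_{n,m}|\femvec{m}{h}{j}(\vec{x}_n)-\femvec{m}{h}{j}(\vec{x}_m)|^2\lambda_n\lambda_m$ on each element (barycentric coordinates, unit nodal values), which gives $\||\femvec{m}{h}{j}|^2-1\|_{\LB^\infty(K)}\lsim h_K^2\|\nabla\femvec{m}{h}{j}\|_{\LB^\infty(K)}^2$, so that an inverse estimate and the first a priori bound yield $\||\vec{m}_{h,k}^{-}|^2-1\|_{\LB^1(\Omega_T)}\lsim h^2\to0$; with $\vec{m}_{h,k}^{-}\to\vec{m}$ in $\LB^2(\Omega_T)$ this forces $|\vec{m}|=1$ a.e. Given this, a standard density argument and the identities $\partial_t\vec{m}\cdot\vec{m}=0$, $(\vec{m}\times\,\cdot\,)\cdot\vec{m}=0$, $\vec{f}(\vec{m},\nabla\vec{m})\cdot\vec{m}=0$ (Assumption (A2)) and $\vec{m}\cdot\Delta\vec{m}=-|\nabla\vec{m}|^2$ show the crossed identity above is equivalent to \eqref{eq:llg-numerical-pde-weak-formulation}; finally $\vec{m}\in H^1(0,T;\LB^2(\Omega))\hookrightarrow C([0,T];\LB^2(\Omega))$ and weak continuity of the time-trace with $\vec{m}_{h,k}(0,\cdot)=\femvec{m}{h}{0}\to\vec{m}_0$ in $\HB^1(\Omega)$ give $\vec{m}(0,\cdot)=\vec{m}_0$. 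I expect the main obstacle to be exactly the passage to the limit in $\alpha\,\liprodt{\nabla\vec{m}_{h,k}^{-}}{\nabla\boldsymbol\psi_{h,k}}$: since $\nabla\vec{m}_{h,k}^{-}$ converges only weakly in $\LB^2$ one cannot take limits in a product of two such factors, and the entire point of the test function $\boldsymbol\phi\times\vec{m}_{h,k}^{-}$ is that its single quadratic-in-$\nabla\vec{m}_{h,k}^{-}$ contribution drops out identically by the triple-product identity; the dependence of the energy estimate on \eqref{section-2-bartels-condition} and on $\theta>\tfrac12$, the $\LB^1$-identification of $\vec{v}_{h,k}$ with $\partial_t\vec{m}_{h,k}$, and the bookkeeping converting the crossed identity into \eqref{eq:llg-numerical-pde-weak-formulation} are the remaining technical points.
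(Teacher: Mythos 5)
Your proposal is correct and follows essentially the same route as the paper: energy estimate from $\boldsymbol\psi=\femvec{v}{h}{j}$ combined with the Bartels angle condition and $\theta>\tfrac12$, discrete Gronwall for the torque's $L^2$ contribution (using $|\femvec{m}{h}{j}|\lesssim1$ and linearity of $\vec{g}_1,\vec{g}_2$ exactly as in the paper's Lemmas 4.3--4.4), compactness of the time-interpolants, the tangent-plane test function $I_{\V_h}(\boldsymbol\phi\times\vec{m}_{h,k}^-)$ with the triple-product cancellation, and componentwise weak--strong limits in the torque term driven by (A1)--(A2). The only differences are cosmetic (you re-derive the $|\vec{m}|=1$ step and the Lax--Milgram well-posedness instead of citing Alouges/Goldys--Le--Tran, and you spell out the reduction from the crossed identity to \eqref{eq:llg-numerical-pde-weak-formulation}), so this matches the paper's argument.
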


\subsection{Proof of Theorem \ref{thm:numerical}}
\begin{lemma} \label{linf-bound}
For each $j \in \{0, \ldots, J\}$,
\begin{equation*}
    \lnorm{\femvec{m}{h}{j}}{\infty} \lsim 1 \quad \text{and} \quad \lnorm{\femvec{m}{h}{j}}{2} \lsim |\Omega|,
\end{equation*}
where $|\Omega|$ is the measure of $\Omega$.
\end{lemma}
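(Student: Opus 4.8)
The plan is to prove the two bounds in Lemma \ref{linf-bound} by induction on $j$. The key observation is that the normalisation step (Step 3 of Algorithm 3.1) forces each nodal value of $\femvec{m}{h}{j}$ to be a unit vector, and for continuous piecewise linear functions this is essentially all one needs.

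First I would establish the $\LB^\infty$ bound. For $j = 0$ we have $\femvec{m}{h}{0} = I_{\V_h}(\vec{m}_0)$, and since $|\vec{m}_0| \equiv 1$ the nodal values $\femvec{m}{h}{0}(\vec{x}_n) = \vec{m}_0(\vec{x}_n)$ all satisfy $|\femvec{m}{h}{0}(\vec{x}_n)| = 1$. For the inductive step, Step 3 explicitly defines $\femvec{m}{h}{j+1}(\vec{x}_n) = (\femvec{m}{h}{j}(\vec{x}_n) + k\femvec{v}{h}{j}(\vec{x}_n))/|\femvec{m}{h}{j}(\vec{x}_n) + k\femvec{v}{h}{j}(\vec{x}_n)|$, which is manifestly a unit vector at every node, provided the denominator is nonzero; but that is guaranteed because $\femvec{v}{h}{j} \in \W_h^{(j)}$ means $\femvec{v}{h}{j}(\vec{x}_n) \perp \femvec{m}{h}{j}(\vec{x}_n)$, so by the Pythagorean theorem $|\femvec{m}{h}{j}(\vec{x}_n) + k\femvec{v}{h}{j}(\vec{x}_n)|^2 = |\femvec{m}{h}{j}(\vec{x}_n)|^2 + k^2|\femvec{v}{h}{j}(\vec{x}_n)|^2 \geq 1$ using the inductive hypothesis. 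Hence every $\femvec{m}{h}{j}$ has all nodal values of modulus $1$. Since a continuous piecewise linear function on a simplex takes values in the convex hull of its vertex values, and the convex hull of points in the closed unit ball lies in the closed unit ball, we get $|\femvec{m}{h}{j}(\vec{x})| \leq 1$ for every $\vec{x} \in \Omega$, i.e. $\lnorm{\femvec{m}{h}{j}}{\infty} \leq 1 \lsim 1$.

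The $\LB^2$ bound is then immediate: $\lnorm{\femvec{m}{h}{j}}{2}^2 = \intomega{|\femvec{m}{h}{j}|^2} \leq \intomega{1} = |\Omega|$, so $\lnorm{\femvec{m}{h}{j}}{2} \leq |\Omega|^{1/2} \lsim |\Omega|$ (adjusting the generic constant, or simply noting $|\Omega|^{1/2} \le \max(1,|\Omega|)$).

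I do not anticipate a genuine obstacle here; the only point requiring a little care is verifying that the denominator in Step 3 never vanishes, which is where the orthogonality constraint defining $\W_h^{(j)}$ is used together with the inductive hypothesis that $|\femvec{m}{h}{j}(\vec{x}_n)| = 1$. One should phrase the induction so that the statement carried forward is the slightly stronger "$|\femvec{m}{h}{j}(\vec{x}_n)| = 1$ for all nodes $n$", from which both displayed bounds follow at each level.
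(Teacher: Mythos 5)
Your proof is correct and is essentially the argument behind the cited reference (the paper simply points to \cite[Lemma 5.2]{goldys2016-article} rather than reproducing it): all nodal values have modulus exactly $1$ by the normalisation in Step~3 (well-defined thanks to the orthogonality built into $\W_h^{(j)}$), and a piecewise linear function on a simplex lies in the convex hull of its vertex values, giving $\lnorm{\femvec{m}{h}{j}}{\infty}\le 1$ and hence the $\LB^2$ bound. Your remark about carrying forward the stronger nodal statement $|\femvec{m}{h}{j}(\vec{x}_n)|=1$ in the induction is exactly the right way to phrase it.
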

\begin{proof}
The proof can be found in \cite[Lemma 5.2]{goldys2016-article}.
\end{proof}

As our numerical construction bounds the $\LB^{\infty}(\Omega)$ norm of $\vec{m}_h^{(j)}$, we can weaken the result of Lemma~\ref{lem:growth-condition} for $\vec{f}$.

\begin{lemma} \label{lem:f-linf-bound}
Let $\vec{u} \in \LB^{\infty}(\Omega) \cap \HB^1(\Omega)$ be such that $\vec{u} \in B_R(\vec{0}) \subset \LB^{\infty}(\Omega)$ where $B_R(\vec{0})$ is the ball centred at $\vec{0}$ with radius $R$. Then
\begin{equation*}
    \lnorm{\vec{f}(\vec{u}, \nabla \vec{u})}{2} \lsim 1 + \lnorm{\nabla \vec{u}}{2},
\end{equation*}
where the implicit constant may depend on $R$.
\end{lemma}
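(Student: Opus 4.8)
The plan is to exploit the algebraic structure of $\vec{f}$ imposed by Assumption~(A1) together with the new pointwise bound on $\vec{u}$, which lets us dispense entirely with the polynomial growth conditions of Assumption~(A2) and with the Sobolev embeddings used in the proof of Lemma~\ref{lem:growth-condition}. Writing
\begin{equation*}
    \vec{f}(\vec{u}, \nabla \vec{u}) = \vec{f}_1(\vec{u}) + \vec{f}_2(\vec{u}) \times \vec{g}_1(\nabla \vec{u}) + \vec{f}_3(\vec{u}) \times \left(\vec{f}_4(\vec{u}) \times \vec{g}_2(\nabla \vec{u})\right),
\end{equation*}
I would estimate the three summands separately in $\LB^2(\Omega)$ and add up.

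First I would observe that, since $\lnorm{\vec{u}}{\infty} \le R$ and each $\vec{f}_i$ is continuous (indeed $C^1$) on the compact ball $\overline{B_R(\vec{0})} \subset \R^3$, the composition $\vec{f}_i(\vec{u})$ lies in $\LB^{\infty}(\Omega)$ with $\lnorm{\vec{f}_i(\vec{u})}{\infty} \le \sup_{|\vec{a}| \le R} |\vec{f}_i(\vec{a})|$, a finite constant depending only on $R$, for $i = 1,\dots,4$. This immediately gives $\lnorm{\vec{f}_1(\vec{u})}{2} \lsim |\Omega|^{1/2} \lsim 1$. For the second summand, using $|\vec{a} \times \vec{b}| \le |\vec{a}|\,|\vec{b}|$ pointwise and the linearity of $\vec{g}_1$ (so that $|\vec{g}_1(\nabla \vec{u})| \lsim |\nabla \vec{u}|$), one gets $\lnorm{\vec{f}_2(\vec{u}) \times \vec{g}_1(\nabla \vec{u})}{2} \le \lnorm{\vec{f}_2(\vec{u})}{\infty}\lnorm{\vec{g}_1(\nabla \vec{u})}{2} \lsim \lnorm{\nabla \vec{u}}{2}$. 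The third summand is handled in the same way, now with two factors of the $\LB^{\infty}$ bound: $\lnorm{\vec{f}_3(\vec{u}) \times (\vec{f}_4(\vec{u}) \times \vec{g}_2(\nabla \vec{u}))}{2} \le \lnorm{\vec{f}_3(\vec{u})}{\infty}\lnorm{\vec{f}_4(\vec{u})}{\infty}\lnorm{\vec{g}_2(\nabla \vec{u})}{2} \lsim \lnorm{\nabla \vec{u}}{2}$, using linearity of $\vec{g}_2$. Summing the three estimates yields $\lnorm{\vec{f}(\vec{u}, \nabla \vec{u})}{2} \lsim 1 + \lnorm{\nabla \vec{u}}{2}$, with all implicit constants depending only on $R$ and $|\Omega|$.

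There is essentially no serious obstacle here; the only point worth flagging is that the constant depends on $R$ precisely through $\sup_{|\vec{a}| \le R} |\vec{f}_i(\vec{a})|$, which is where boundedness of $\vec{f}_i$ on compact sets (a consequence of continuity alone, not of the growth bounds) is invoked, and that no derivative of $\vec{f}_i$ appears since we only estimate $\vec{f}$ in $\LB^2$ rather than in $\HB^1$. If the analogous bound for $\vec{F}$ in~\eqref{equ:F} is wanted, it follows at once: $\vec{F}(\vec{u}, \nabla\vec{u})$ is a fixed linear combination of $\vec{f}(\vec{u}, \nabla\vec{u})$ and $\vec{u} \times \vec{f}(\vec{u}, \nabla\vec{u})$, and the extra factor $\lnorm{\vec{u}}{\infty} \le R$ is absorbed into the constant.
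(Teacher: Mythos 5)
Your proof is correct and follows essentially the same route as the paper: split $\vec{f}$ by Assumption (A1), put the $\vec{f}_i(\vec{u})$ factors in $\LB^\infty$ using the pointwise bound $|\vec{u}|\le R$, and put the linear $\vec{g}_j(\nabla\vec{u})$ factors in $\LB^2$. The only cosmetic difference is that the paper justifies the $\LB^\infty$ bound on $\vec{f}_i(\vec{u})$ via the (A2) growth conditions, whereas you correctly observe that continuity of $\vec{f}_i$ on the compact ball $\overline{B_R(\vec{0})}\subset\R^3$ already suffices.
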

\begin{proof}
Taking the $\LB^2(\Omega)$ of $\vec{f}(\vec{u}, \nabla \vec{u})$ we obtain
\begin{align*}
    \lnorm{\vec{f}(\vec{u}, \nabla \vec{u})}{2} &\lsim \lnorm{\vec{f}_1(\vec{u})}{2} + \lnorm{\vec{f}_2(\vec{u})}{\infty} \lnorm{\vec{g}_1(\nabla \vec{u})}{2}\\
    &\qquad + \lnorm{\vec{f}_3(\vec{u})}{\infty} \lnorm{\vec{f}_4(\vec{u})}{\infty} \lnorm{\vec{g}_2(\nabla \vec{u})}{2} \\
    &\lsim 1 + \lnorm{\nabla \vec{u}}{2},
\end{align*}
using Assumption (A2) on the growth conditions on $\vec{f}_i$, for $i=1,2,3,4$, and Assumption (A1) on the linearity of $\vec{g}_j$, for $j=1,2$.
\end{proof}

Using this lemma, we can now start to bound the gradient and time-derivatives of $\vec{m}_h^{(j)}$.

\begin{lemma} \label{discrete-bounds}
For all $j \in \{1, \ldots, J\}$,
\begin{equation*}
    \lnorm{\nabla \femvec{m}{h}{j}}{2}^2 + \sum_{i=0}^{j-1} k \lnorm{\femvec{v}{h}{j}}{2}^2 + k^2(2 \theta -1) \sum_{i=0}^{j-1} \lnorm{\nabla \femvec{v}{h}{j}}{2}^2 \lsim 1.
\end{equation*}
\end{lemma}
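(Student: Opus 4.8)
The plan is to derive a discrete energy inequality by testing the defining equation \eqref{section-3-fem-equation} of Algorithm~3.1 with $\boldsymbol\psi = \femvec{v}{h}{j} \in \W_h^{(j)}$, then summing over the time steps. First I would set $\boldsymbol\psi = \femvec{v}{h}{j}$ in \eqref{section-3-fem-equation}; the cross-product term $\liprod{\femvec{m}{h}{j} \times \femvec{v}{h}{j}}{\femvec{v}{h}{j}}$ vanishes identically, leaving
\begin{equation*}
    \beta \lnorm{\femvec{v}{h}{j}}{2}^2 = -\alpha \liprod{\nabla \femvec{m}{h}{j}}{\nabla \femvec{v}{h}{j}} - \alpha \theta k \lnorm{\nabla \femvec{v}{h}{j}}{2}^2 - \liprod{\femvec{m}{h}{j} \times \vec{f}(\femvec{m}{h}{j}, \nabla \femvec{m}{h}{j})}{\femvec{v}{h}{j}}.
\end{equation*}
The key algebraic step is the identity $\liprod{\nabla \femvec{m}{h}{j}}{\nabla \femvec{v}{h}{j}} = \frac{1}{2k}\big(\lnorm{\nabla(\femvec{m}{h}{j} + k\femvec{v}{h}{j})}{2}^2 - \lnorm{\nabla \femvec{m}{h}{j}}{2}^2 - k^2\lnorm{\nabla \femvec{v}{h}{j}}{2}^2\big)$, which lets us telescope. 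Combined with the nodewise nonexpansivity of the projection in Step~3 (which gives $\lnorm{\nabla \femvec{m}{h}{j+1}}{2} \le \lnorm{\nabla(\femvec{m}{h}{j} + k\femvec{v}{h}{j})}{2}$, the standard estimate from \cite{alouges2008-article, bartels2006vol44-article} that requires the angle condition \eqref{section-2-bartels-condition} on $\T_h$), multiplying through by $k$ and rearranging yields
\begin{equation*}
    \frac{\alpha}{2}\lnorm{\nabla \femvec{m}{h}{j+1}}{2}^2 + \beta k \lnorm{\femvec{v}{h}{j}}{2}^2 + \alpha k^2\Big(\theta - \tfrac12\Big)\lnorm{\nabla \femvec{v}{h}{j}}{2}^2 \le \frac{\alpha}{2}\lnorm{\nabla \femvec{m}{h}{j}}{2}^2 - k\liprod{\femvec{m}{h}{j} \times \vec{f}(\femvec{m}{h}{j}, \nabla \femvec{m}{h}{j})}{\femvec{v}{h}{j}}.
\end{equation*}

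Next I would control the forcing term. Using $|\femvec{m}{h}{j} \times \vec{f}| \le |\femvec{m}{h}{j}|\,|\vec{f}|$, the uniform bound $\lnorm{\femvec{m}{h}{j}}{\infty} \lsim 1$ from Lemma~\ref{linf-bound}, and the weakened growth estimate of Lemma~\ref{lem:f-linf-bound} (applicable precisely because $\femvec{m}{h}{j}$ is uniformly bounded in $\LB^\infty$), we get $\lnorm{\femvec{m}{h}{j} \times \vec{f}(\femvec{m}{h}{j}, \nabla \femvec{m}{h}{j})}{2} \lsim 1 + \lnorm{\nabla \femvec{m}{h}{j}}{2}$. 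Then Cauchy--Schwarz and Young's inequality give
\begin{equation*}
    k\big|\liprod{\femvec{m}{h}{j} \times \vec{f}}{\femvec{v}{h}{j}}\big| \le \frac{\beta k}{2}\lnorm{\femvec{v}{h}{j}}{2}^2 + Ck\big(1 + \lnorm{\nabla \femvec{m}{h}{j}}{2}^2\big),
\end{equation*}
so that the $\femvec{v}{h}{j}$-term is absorbed into the left side and we are left with a discrete Gronwall configuration: writing $a_j := \lnorm{\nabla \femvec{m}{h}{j}}{2}^2$, summing from $i = 0$ to $j-1$ yields $a_j + \sum k\lnorm{\femvec{v}{h}{i}}{2}^2 + k^2(2\theta-1)\sum \lnorm{\nabla \femvec{v}{h}{i}}{2}^2 \lsim a_0 + T + k\sum_{i=0}^{j-1} a_i$. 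Since $a_0 = \lnorm{\nabla \femvec{m}{h}{0}}{2}^2 = \lnorm{\nabla I_{\V_h}\vec{m}_0}{2}^2 \lsim 1$ by the convergence hypothesis $\femvec{m}{h}{0} \to \vec{m}_0$ in $\HB^1(\Omega)$, the discrete Gronwall lemma closes the estimate uniformly in $j$, $h$, and $k$.

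The main obstacle is the second term on the right of the energy inequality, the projection step: one must verify that Step~3's nodewise normalisation does not increase the Dirichlet energy, i.e. $\lnorm{\nabla \femvec{m}{h}{j+1}}{2} \le \lnorm{\nabla(\femvec{m}{h}{j} + k\femvec{v}{h}{j})}{2}$. This is exactly where the angle/acuteness condition \eqref{section-2-bartels-condition} on the triangulation enters, via the elementwise estimate $\sum_{n,m}(\phi_n,\phi_m)_{\HB^1}\,|\vec a_n/|\vec a_n| - \vec a_m/|\vec a_m||^2 \le \sum_{n,m}(\phi_n,\phi_m)_{\HB^1}\,|\vec a_n - \vec a_m|^2$ valid when the off-diagonal stiffness entries $(\nabla\phi_n, \nabla\phi_m)$ are nonpositive and $|\vec a_n| \ge 1$ at every node (the latter because $|\femvec{m}{h}{j}(\vec x_n)| = 1$ and $\femvec{v}{h}{j}(\vec x_n) \perp \femvec{m}{h}{j}(\vec x_n)$, so $|\femvec{m}{h}{j}(\vec x_n) + k\femvec{v}{h}{j}(\vec x_n)|^2 = 1 + k^2|\femvec{v}{h}{j}(\vec x_n)|^2 \ge 1$). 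I would cite the corresponding lemma from \cite{alouges2008-article} or \cite{bartels2006vol44-article} for this step rather than reproving it. Everything else is routine discrete energy bookkeeping.
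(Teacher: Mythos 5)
Your proposal is correct and follows essentially the same route as the paper: test \eqref{section-3-fem-equation} with $\boldsymbol\psi = \femvec{v}{h}{j}$, invoke Theorem~\ref{section-2-bartels-theorem} to get the nonexpansivity of the nodewise normalisation, control the forcing term via Lemmas~\ref{linf-bound} and~\ref{lem:f-linf-bound} together with Young's inequality, and close with the discrete Gronwall lemma. The only cosmetic difference is that you package the cross term via the polarisation identity before invoking the projection bound, whereas the paper expands $\lnorm{\nabla(\femvec{m}{h}{j} + k\femvec{v}{h}{j})}{2}^2$ directly and then substitutes; these are algebraically the same step, and the resulting energy inequality and Gronwall argument match.
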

\begin{proof}
We return to the finite element weak formulation (\ref{section-3-fem-equation})
\begin{align*}
    \beta \liprod{\femvec{v}{h}{j}}{\boldsymbol\psi} -  \liprod{\femvec{m}{h}{j} \times \femvec{v}{h}{j}}{\boldsymbol{\psi}} &= -\alpha \liprod{\nabla \left(\femvec{m}{h}{j} + \theta k \femvec{v}{h}{j}\right)}{\nabla \boldsymbol{\psi}} \\
    &\qquad - \liprod{\femvec{m}{h}{j} \times \vec{f}\left(\femvec{m}{h}{j}, \nabla \femvec{m}{h}{j}\right)}{\boldsymbol{\psi}}.
\end{align*}
Setting $\boldsymbol{\psi} = \femvec{v}{h}{j}$ we see that
\begin{equation*}
    \beta \lnorm{\femvec{v}{h}{j}}{2}^2 = -\alpha \liprod{\nabla \femvec{m}{h}{j}}{\nabla \femvec{v}{h}{j}} - \alpha \theta k \lnorm{\nabla \femvec{v}{h}{j}}{2}^2 - \liprod{\femvec{m}{h}{j} \times \vec{f}\left(\femvec{m}{h}{j},\nabla \femvec{m}{h}{j}\right)}{\femvec{v}{h}{j}}
\end{equation*}
and so
\begin{equation*}
    \liprod{\nabla \femvec{m}{h}{j}}{\nabla \femvec{v}{h}{j}} = -\frac{\beta}{\alpha} \lnorm{\femvec{v}{h}{j}}{2}^2 - \theta k \lnorm{\nabla \femvec{v}{h}{j}}{2}^2 - \frac{\beta}{\alpha} \liprod{\femvec{m}{h}{j} \times \vec{f}\left(\femvec{m}{h}{j},\nabla \femvec{m}{h}{j}\right)}{\femvec{v}{h}{j}}.
\end{equation*}
By Theorem \ref{section-2-bartels-theorem},
\begin{equation*}
    \lnorm{\nabla \femvec{m}{h}{j+1}}{2}^2 \leq \lnorm{\nabla \left(\femvec{m}{h}{j} + k \femvec{v}{h}{j}\right)}{2}^2
\end{equation*}
therefore 
\begin{align*}
    \lnorm{\nabla \femvec{m}{h}{j+1}}{2}^2 &\leq \lnorm{\nabla \femvec{m}{h}{j}}{2}^2 + k^2 \lnorm{\nabla \femvec{v}{h}{j}}{2}^2 + 2 k \liprod{\nabla \femvec{m}{h}{j}}{\nabla \femvec{v}{h}{j}} \\[1ex]
    &=  \lnorm{\nabla \femvec{m}{h}{j}}{2}^2 + k^2 \lnorm{\nabla \femvec{v}{h}{j}}{2}^2 -\frac{2\beta k}{\alpha} \lnorm{\femvec{v}{h}{j}}{2}^2 - 2\theta k^2 \lnorm{\nabla \femvec{v}{h}{j}}{2}^2 \\
    &\qquad - \frac{2\beta k}{\alpha} \liprod{\femvec{m}{h}{j} \times \vec{f}\left(\femvec{m}{h}{j},\nabla \femvec{m}{h}{j}\right)}{\femvec{v}{h}{j}} \\[1ex]
    &\leq \lnorm{\nabla \femvec{m}{h}{j}}{2}^2 + k^2 (1-2\theta) \lnorm{\nabla \femvec{v}{h}{j}}{2}^2 -\frac{2k}{\alpha} \lnorm{\femvec{v}{h}{j}}{2}^2 \\
    &\qquad \frac{k}{\alpha \beta} \lnorm{\vec{f}\left(\femvec{m}{h}{j},\nabla \femvec{m}{h}{j}\right)}{2}^2 + \frac{\beta k}{\alpha}\lnorm{\femvec{v}{h}{j}}{2}^2.
\end{align*}
Summing over the time periods $0,\ldots,j$ and using Lemma \ref{linf-bound} and \ref{lem:f-linf-bound} we obtain
\begin{align} \label{sugiyama-setup}
\begin{split}
    \lnorm{\nabla \femvec{m}{h}{j}}{2}^2 + \sum_{i=0}^{j-1} k \lnorm{\femvec{v}{h}{j}}{2}^2 &+ k^2(2 \theta -1) \sum_{i=0}^{j-1} \lnorm{\nabla \femvec{v}{h}{j}}{2}^2 \\
    &\lsim \lnorm{\nabla \femvec{m}{h}{0}}{2}^2 + k \sum_{i=0}^{j-1} \lnorm{\vec{f}\left(\femvec{m}{h}{i},\nabla \femvec{m}{h}{i}\right)}{2}^2. \\
    &\lsim 1 + C k \sum_{i=0}^{j-1} \lnorm{\nabla \femvec{m}{h}{i}}{2}^2.
\end{split}
\end{align}
By the discrete Gronwall inequality \cite{sugiyama1969-article}
\begin{equation*}
    \lnorm{\nabla \femvec{m}{h}{j}}{2}^2 \lsim (1 + Ck)^i,
\end{equation*}
and so by summing over $i = 0, \ldots, j-1$ and using $1 + x \leq e^x$ we have
\begin{equation*}
    k \sum_{i=0}^{j-1} \lnorm{\nabla \femvec{m}{h}{i}}{2}^2 \lsim k \frac{(1 + Ck)^j - 1}{Ck} \lsim e^{CkJ} \lsim 1.
\end{equation*}
Combining this with (\ref{sugiyama-setup}) gives us the required result.
\end{proof}

\begin{definition}
For all $\vec{x} \in \Omega$ and all $t \in [0,T]$, let $j \in {0, \ldots, J}$ be such that $t \in [t_j, t_{j+1})$. We then define
\begin{align*}
    \vec{m}_{h,k}(t,\vec{x}) &:= \frac{t-t_j}{k} \femvec{m}{h}{j+1}(\vec{x}) + \frac{t_{j+1} - t}{k} \femvec{m}{h}{j}(\vec{x}), \\
    \vec{m}_{h,k}^- (t,\vec{x}) &:= \femvec{m}{h}{j}(\vec{x}), \\
    \vec{v}_{h,k}(t,\vec{x}) &:= \femvec{v}{h}{j}(\vec{x}).
\end{align*}
\end{definition}

\begin{lemma} \label{interpolant-bounds}
For all $\theta \in (\frac{1}{2},1]$,
\begin{equation*}
    \lnormt{\vec{m}_{h,k}^*}{2}^2 + \lnormt{\nabla \vec{m}_{h,k}^*}{2}^2 + \lnormt{\vec{v}_{h,k}}{2}^2 + k(2\theta-1) \lnormt{\nabla \vec{v}_{h,k}}{2}^2 \lsim 1,
\end{equation*}
where $\vec{m}_{h,k}^* = \vec{m}_{h,k}$ or $\vec{m}_{h,k}^-$.
\end{lemma}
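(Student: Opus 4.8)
The plan is to reduce every space--time norm on the left-hand side to a $k$-weighted sum over the time levels of the discrete quantities already estimated in Lemmas~\ref{linf-bound} and~\ref{discrete-bounds}, so that the conclusion follows by summation; essentially no new idea is needed beyond unwinding the definitions of $\vec{m}_{h,k}$, $\vec{m}_{h,k}^-$ and $\vec{v}_{h,k}$.

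First I would dispose of the two quantities that are piecewise constant in time. Since $\vec{v}_{h,k}(t,\cdot)=\femvec{v}{h}{j}$ for $t\in[t_j,t_{j+1})$, one has
\begin{equation*}
    \lnormt{\vec{v}_{h,k}}{2}^2 = \sum_{j=0}^{J-1}\int_{t_j}^{t_{j+1}}\lnorm{\femvec{v}{h}{j}}{2}^2\;\ds = \sum_{j=0}^{J-1}k\,\lnorm{\femvec{v}{h}{j}}{2}^2 ,
\end{equation*}
and, in the same way, $k(2\theta-1)\lnormt{\nabla\vec{v}_{h,k}}{2}^2 = k^2(2\theta-1)\sum_{j=0}^{J-1}\lnorm{\nabla\femvec{v}{h}{j}}{2}^2$; both are $\lsim 1$ directly by Lemma~\ref{discrete-bounds}. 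The identical computation applied to $\vec{m}_{h,k}^-$ gives $\lnormt{\vec{m}_{h,k}^-}{2}^2 = \sum_{j=0}^{J-1}k\,\lnorm{\femvec{m}{h}{j}}{2}^2 \lsim T|\Omega|^2 \lsim 1$ via the $\LB^2$ bound in Lemma~\ref{linf-bound}, and $\lnormt{\nabla\vec{m}_{h,k}^-}{2}^2 = \sum_{j=0}^{J-1}k\,\lnorm{\nabla\femvec{m}{h}{j}}{2}^2 \lsim T \lsim 1$ since Lemma~\ref{discrete-bounds} bounds each $\lnorm{\nabla\femvec{m}{h}{j}}{2}^2$ uniformly in $j$.

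Next I would treat the piecewise linear interpolant $\vec{m}_{h,k}$: on $[t_j,t_{j+1})$ it is the convex combination $\lambda(t)\femvec{m}{h}{j+1}+(1-\lambda(t))\femvec{m}{h}{j}$ with $\lambda(t)=(t-t_j)/k\in[0,1)$, so convexity of $s\mapsto s^2$ gives
\begin{equation*}
    \lnorm{\vec{m}_{h,k}(t)}{2}^2 \le \lambda(t)\lnorm{\femvec{m}{h}{j+1}}{2}^2 + (1-\lambda(t))\lnorm{\femvec{m}{h}{j}}{2}^2 ,
\end{equation*}
together with the analogous inequality for $\nabla\vec{m}_{h,k}$. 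Integrating in $t$ over each subinterval and re-using the nodal bounds from the previous step yields $\lnormt{\vec{m}_{h,k}}{2}^2 + \lnormt{\nabla\vec{m}_{h,k}}{2}^2 \lsim 1$. Adding the five resulting estimates gives the claim for both choices $\vec{m}_{h,k}^* = \vec{m}_{h,k}$ and $\vec{m}_{h,k}^-$.

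I do not expect a genuine obstacle here --- the substantive estimate is already contained in Lemma~\ref{discrete-bounds}. The only point needing a word of care is the indexing at the two endpoints: the interpolant on $[0,t_1)$ calls on $\femvec{m}{h}{0}$ and the one on $[t_{J-1},t_J)$ on $\femvec{m}{h}{J}$. The bound on $\femvec{m}{h}{J}$ is part of Lemma~\ref{discrete-bounds}, while for $\femvec{m}{h}{0}=I_{\V_h}(\vec{m}_0)$ the hypothesis $\femvec{m}{h}{0}\to\vec{m}_0$ in $\HB^1(\Omega)$ of Theorem~\ref{thm:numerical} guarantees that both $\lnorm{\femvec{m}{h}{0}}{2}$ and $\lnorm{\nabla\femvec{m}{h}{0}}{2}$ are bounded independently of $h$, so all the nodal norms entering the sums above are controlled uniformly.
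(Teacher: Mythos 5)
Your argument is correct and complete. The paper does not spell out a proof of this lemma---it simply cites \cite[Lemma~6.2]{goldys2016-article}---and what you have written is precisely the standard unwinding of the space--time interpolants into $k$-weighted sums of the nodal quantities already controlled by Lemmas~\ref{linf-bound} and~\ref{discrete-bounds}, including the appropriate care about the $j=0$ term via the hypothesis $\femvec{m}{h}{0}\to\vec{m}_0$ in $\HB^1(\Omega)$.
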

\begin{proof}
The proof can be found in \cite[Lemma 6.2]{goldys2016-article}
\end{proof}

\begin{lemma} \label{discrete-interpolant-error}
Assume that $h$ and $k$ go to 0. The sequences $\{\vec{m}_{h,k}\}$, $\{\vec{m}_{h,k}^-\}$ and $\{\vec{v}_{h,k}\}$ satisfy the properties
\begin{align}
    \hnormt{\vec{m}_{h,k}}{1} &\lsim 1, \label{discrete-interpolant-error-1} \\
    \lnormt{\vec{m}_{h,k} - \vec{m}_{h,k}^-}{2} &\lsim k, \label{discrete-interpolant-error-2} \\
    \lnormt{\vec{v}_{h,k} - \partial_t \vec{m}_{h,k}}{1} &\lsim k, \label{discrete-interpolant-error-3} \\
    \lnormt{|\vec{m}_{h,k}| - 1}{2} &\lsim h+k \label{discrete-interpolant-error-4}.
\end{align}
\end{lemma}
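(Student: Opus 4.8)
The plan is to reduce everything to the \emph{nodal} behaviour of Algorithm~3.1, since $\vec{f}$ enters these four estimates only through Lemma~\ref{discrete-bounds}, which is already available; the argument then runs along the lines of \cite{goldys2016-article}. First I would record the nodal identities. Because $\femvec{m}{h}{0}=I_{\V_h}(\vec{m}_0)$ with $|\vec{m}_0|\equiv1$, and because $\femvec{v}{h}{j}\in\W_h^{(j)}$ forces $\femvec{v}{h}{j}(\vec{x}_n)\cdot\femvec{m}{h}{j}(\vec{x}_n)=0$, the normalisation in Step~3 gives, by induction on $j$, $|\femvec{m}{h}{j}(\vec{x}_n)|=1$ and $|\femvec{m}{h}{j}(\vec{x}_n)+k\femvec{v}{h}{j}(\vec{x}_n)|=(1+k^2|\femvec{v}{h}{j}(\vec{x}_n)|^2)^{1/2}$ for all $j,n$. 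Together with the elementary bound $0\le(1+a)^{1/2}-1\le a/2$ this yields, at every node,
\begin{gather*}
|\femvec{m}{h}{j+1}(\vec{x}_n)-\femvec{m}{h}{j}(\vec{x}_n)|\le k|\femvec{v}{h}{j}(\vec{x}_n)|,\\
\Big|\femvec{v}{h}{j}(\vec{x}_n)-\tfrac1k\big(\femvec{m}{h}{j+1}(\vec{x}_n)-\femvec{m}{h}{j}(\vec{x}_n)\big)\Big|\le\tfrac{k}{2}|\femvec{v}{h}{j}(\vec{x}_n)|^2 .
\end{gather*}
I then invoke two standard $\V_h$-facts: $|\vec{w}_h(\vec{x})|\le\max_n|\vec{w}_h(\vec{x}_n)|$ pointwise (a convex combination), and the mass-lumped equivalence $\sum_n(\int_\Omega\phi_n)|\vec{w}_h(\vec{x}_n)|^2\simeq\lnorm{\vec{w}_h}{2}^2$. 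In particular the first nodal estimate gives $\lnorm{\femvec{m}{h}{j+1}-\femvec{m}{h}{j}}{2}\lsim k\lnorm{\femvec{v}{h}{j}}{2}$, while $\lnorm{\femvec{m}{h}{j+1}-\femvec{m}{h}{j}}{\infty}\le2$ and $|\vec{m}_{h,k}|\le1$ pointwise.

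Estimates \eqref{discrete-interpolant-error-1}--\eqref{discrete-interpolant-error-3} then follow slab by slab. On $(t_j,t_{j+1})$ one has $\vec{m}_{h,k}-\vec{m}_{h,k}^-=\tfrac{t-t_j}{k}(\femvec{m}{h}{j+1}-\femvec{m}{h}{j})$ and $\partial_t\vec{m}_{h,k}=\tfrac1k(\femvec{m}{h}{j+1}-\femvec{m}{h}{j})$; integrating in $t$ over the slab and using $\lnorm{\femvec{m}{h}{j+1}-\femvec{m}{h}{j}}{2}\lsim k\lnorm{\femvec{v}{h}{j}}{2}$ gives $\|\partial_t\vec{m}_{h,k}\|_{\LB^2(\Omega\times(t_j,t_{j+1}))}^2\lsim k\lnorm{\femvec{v}{h}{j}}{2}^2$ and $\|\vec{m}_{h,k}-\vec{m}_{h,k}^-\|_{\LB^2(\Omega\times(t_j,t_{j+1}))}^2\lsim k^3\lnorm{\femvec{v}{h}{j}}{2}^2$; summing over $j$ and invoking $\sum_jk\lnorm{\femvec{v}{h}{j}}{2}^2\lsim1$ from Lemma~\ref{discrete-bounds} yields $\lnormt{\partial_t\vec{m}_{h,k}}{2}\lsim1$ (hence \eqref{discrete-interpolant-error-1}, combined with the bounds of Lemma~\ref{interpolant-bounds} on $\lnormt{\vec{m}_{h,k}}{2}$ and $\lnormt{\nabla\vec{m}_{h,k}}{2}$) and $\lnormt{\vec{m}_{h,k}-\vec{m}_{h,k}^-}{2}\lsim k$, i.e.\ \eqref{discrete-interpolant-error-2}. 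For \eqref{discrete-interpolant-error-3}, $\vec{v}_{h,k}-\partial_t\vec{m}_{h,k}=\femvec{v}{h}{j}-\tfrac1k(\femvec{m}{h}{j+1}-\femvec{m}{h}{j})$ on the slab, and by the second nodal estimate its $\LB^1(\Omega)$-norm is $\lsim\sum_n(\int_\Omega\phi_n)\tfrac{k}{2}|\femvec{v}{h}{j}(\vec{x}_n)|^2\lsim k\lnorm{\femvec{v}{h}{j}}{2}^2$; integrating in $t$ and summing against Lemma~\ref{discrete-bounds} gives $\lnormt{\vec{v}_{h,k}-\partial_t\vec{m}_{h,k}}{1}\lsim k$. (The quadratic nodal defect is exactly why \eqref{discrete-interpolant-error-3} is stated in $\LB^1$ and not $\LB^2$: the latter would produce $\lnorm{\femvec{v}{h}{j}}{4}$, which is not controlled since no coupling $k\sim h^\sigma$ is assumed.)

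The main obstacle is \eqref{discrete-interpolant-error-4}. Since $|\vec{m}_{h,k}|\le1$ pointwise, $\big||\vec{m}_{h,k}|-1\big|\le\big||\vec{m}_{h,k}|^2-1\big|\le2\big||\vec{m}_{h,k}|-1\big|$, so it suffices to bound $\||\vec{m}_{h,k}|^2-1\|_{\LB^2(\Omega_T)}$. Writing $\vec{m}_{h,k}(t)=s\femvec{m}{h}{j+1}+(1-s)\femvec{m}{h}{j}$ with $s=(t-t_j)/k$ and using $|s\vec{q}+(1-s)\vec{p}|^2=s|\vec{q}|^2+(1-s)|\vec{p}|^2-s(1-s)|\vec{q}-\vec{p}|^2$ gives the pointwise bound $\big||\vec{m}_{h,k}(t)|^2-1\big|\le\big||\femvec{m}{h}{j+1}|^2-1\big|+\big||\femvec{m}{h}{j}|^2-1\big|+|\femvec{m}{h}{j+1}-\femvec{m}{h}{j}|^2$. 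For the first two terms I would use the elementwise identity $|\vec{w}_h|^2-1=-\sum_{p<q}|\vec{w}_h(\vec{x}_p)-\vec{w}_h(\vec{x}_q)|^2\lambda_p\lambda_q$ (valid when $|\vec{w}_h(\vec{x}_n)|=1$ at every node, with $\lambda_p$ the barycentric coordinates), which bounds $\big||\vec{w}_h|^2-1\big|\lsim h_T^2|\nabla\vec{w}_h|^2$ on each element $T$, hence $\||\femvec{m}{h}{j}|^2-1\|_{\LB^1(\Omega)}\lsim h^2\lnorm{\nabla\femvec{m}{h}{j}}{2}^2$; combined with $\||\femvec{m}{h}{j}|^2-1\|_{\LB^\infty(\Omega)}\lsim1$ (Lemma~\ref{linf-bound}) this gives $\||\femvec{m}{h}{j}|^2-1\|_{\LB^2(\Omega)}^2\lsim h^2\lnorm{\nabla\femvec{m}{h}{j}}{2}^2$, whose time-sum is $\lsim h^2$ by Lemma~\ref{interpolant-bounds}. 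For the last term, $\lnorm{|\femvec{m}{h}{j+1}-\femvec{m}{h}{j}|^2}{2}^2=\lnorm{\femvec{m}{h}{j+1}-\femvec{m}{h}{j}}{4}^4\le\lnorm{\femvec{m}{h}{j+1}-\femvec{m}{h}{j}}{\infty}^2\lnorm{\femvec{m}{h}{j+1}-\femvec{m}{h}{j}}{2}^2\lsim k^2\lnorm{\femvec{v}{h}{j}}{2}^2$, whose time-sum is $\lsim k^2$ by Lemma~\ref{discrete-bounds}. Hence $\||\vec{m}_{h,k}|^2-1\|_{\LB^2(\Omega_T)}^2\lsim h^2+k^2$, which is \eqref{discrete-interpolant-error-4}. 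The recurring subtlety throughout — and the reason only $\LB^2$-norms of $\femvec{v}{h}{j}$ and $\nabla\femvec{m}{h}{j}$ ever appear — is the absence of a mesh-coupling condition: every higher $\LB^p$-norm met (of $\femvec{m}{h}{j+1}-\femvec{m}{h}{j}$, or of the unit-length defect) must be converted to an $\LB^2$-quantity first, using the uniform $\LB^\infty$-bounds of Lemma~\ref{linf-bound} together with $|\vec{m}_{h,k}|\le1$.
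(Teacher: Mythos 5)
Your proof is correct, and since the paper itself just cites \cite{alouges2008-article} for these estimates, your reconstruction via nodal identities, mass-lumped norm equivalence, and the convexity defect $|\vec{w}_h|^2-1=-\sum_{p<q}\lambda_p\lambda_q|\vec{w}_h(\vec{x}_p)-\vec{w}_h(\vec{x}_q)|^2$ is exactly the argument the authors are deferring to, carried over verbatim to the present setting. Your observation that the new term $\vec{f}$ enters only through the \emph{a priori} bound of Lemma~\ref{discrete-bounds} (so that all four estimates are entirely insensitive to the specific torque) is the right reason the citation suffices, and your remark about why \eqref{discrete-interpolant-error-3} is stated in $\LB^1$ rather than $\LB^2$ is an accurate account of the absence of any $k\sim h^\sigma$ coupling.
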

\begin{proof}
These facts have already been proved in \cite{alouges2008-article}.
\end{proof}

We now write the weak formulation of the numerical solutions with test functions in $C_0^{\infty}(\Omega_T)$ instead of $\V_h$ as a precusor to showing limiting solution satisfies the weak formulation \eqref{eq:llg-numerical-pde-weak-formulation}.

\begin{lemma} \label{section-4-discrete-weak-formulation}
For any $\boldsymbol{\varphi} \in C_0^{\infty}(\Omega_T)$, as $h$ and $k$ go to zero,
\begin{align*}
    \begin{split}
        &\beta \liprodt{\vec{v}_{h,k}}{\vec{m}_{h,k}^- \times \boldsymbol{\varphi}} - \liprodt{\vec{m}_{h,k}^- \times \vec{v}_{h,k}}{\vec{m}_{h,k}^- \times \boldsymbol{\varphi}} \\
        &+ \alpha \liprodt{\nabla (\vec{m}_{h,k}^- + \theta k \vec{v}_{h,k})}{\nabla (\vec{m}_{h,k}^- \times \boldsymbol{\varphi})} + \liprodt{\vec{m}_{h,k}^- \times \vec{f}(\vec{m}_{h,k}^-, \nabla \vec{m}_{h,k}^-)}{\vec{m}_{h,k}^- \times \boldsymbol{\varphi}}  = O(h).
    \end{split}
\end{align*}
\end{lemma}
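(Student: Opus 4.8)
The plan is to start from the discrete weak formulation \eqref{section-3-fem-equation}, which holds for all test functions $\boldsymbol{\psi} \in \W_h^{(j)}$, and to produce a suitable discrete test function out of $\boldsymbol{\varphi} \in C_0^\infty(\Omega_T)$ that approximates $\femvec{m}{h}{j} \times \boldsymbol{\varphi}(t_j,\cdot)$. The natural candidate is $\boldsymbol{\psi} = \boldsymbol{\psi}_h^{(j)} := I_{\V_h}\bigl(\femvec{m}{h}{j} \times \boldsymbol{\varphi}(t_j,\cdot)\bigr)$; since $\boldsymbol{\psi}_h^{(j)}(\vec{x}_n) \cdot \femvec{m}{h}{j}(\vec{x}_n) = \bigl(\femvec{m}{h}{j}(\vec{x}_n)\times\boldsymbol{\varphi}(t_j,\vec{x}_n)\bigr)\cdot\femvec{m}{h}{j}(\vec{x}_n) = 0$ at every node, this indeed lies in $\W_h^{(j)}$ and is therefore admissible. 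First I would substitute this $\boldsymbol{\psi}_h^{(j)}$ into \eqref{section-3-fem-equation}, multiply by $k$, and sum over $j = 0, \ldots, J-1$, so that the sums become the time integrals defining $\vec{m}_{h,k}^-$, $\vec{v}_{h,k}$ and the right-hand side in the statement. The remaining task is then to replace $\boldsymbol{\psi}_h^{(j)}$ by $\femvec{m}{h}{j}\times\boldsymbol{\varphi}(t_j,\cdot)$ (i.e. remove the interpolation operator), and to replace $\boldsymbol{\varphi}(t_j,\cdot)$ by $\boldsymbol{\varphi}(t,\cdot)$ for $t\in[t_j,t_{j+1})$, in each of the four terms, keeping track that every error incurred is $O(h)$ (errors of order $k$ or $hk$ are absorbed into $O(h)$ after the discrete sums, but one must check they do not blow up).

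The key estimates I would invoke are the standard interpolation bounds on $\V_h$: for $\vec{w}_h \in \V_h$ and smooth $\boldsymbol{\varphi}$, $\|\vec{w}_h\boldsymbol{\varphi} - I_{\V_h}(\vec{w}_h\boldsymbol{\varphi})\|_{\LB^2(\Omega)} \lsim h \|\vec{w}_h\|_{\HB^1(\Omega)}\|\boldsymbol{\varphi}\|_{W^{1,\infty}}$ and a similar bound with a gradient, $\|\nabla(\vec{w}_h\boldsymbol{\varphi} - I_{\V_h}(\vec{w}_h\boldsymbol{\varphi}))\|_{\LB^2(\Omega)} \lsim h\|\nabla\vec{w}_h\|_{\LB^2(\Omega)}\|\boldsymbol{\varphi}\|_{W^{2,\infty}} + \|\nabla\vec{w}_h\|_{\LB^2(\Omega)}\|\boldsymbol{\varphi}\|_{W^{1,\infty}}$ (where on each element $\nabla\vec{w}_h$ is constant so the interpolation error in the product is controlled by the smooth factor). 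Combined with the uniform bounds from Lemma \ref{linf-bound}, Lemma \ref{discrete-bounds} and Lemma \ref{interpolant-bounds} — in particular $\lnormt{\nabla\vec{m}_{h,k}^-}{2}\lsim 1$, $\lnormt{\vec{v}_{h,k}}{2}\lsim 1$, $k(2\theta-1)\lnormt{\nabla\vec{v}_{h,k}}{2}^2\lsim 1$, and $\lnorm{\femvec{m}{h}{j}}{\infty}\lsim 1$ — each of the four discrete sums differs from the corresponding expression with $I_{\V_h}$ removed by a quantity of size $O(h)$. The term $\liprodt{\vec{m}_{h,k}^- \times \vec{f}(\vec{m}_{h,k}^-, \nabla \vec{m}_{h,k}^-)}{\vec{m}_{h,k}^- \times \boldsymbol{\varphi}}$ needs Lemma \ref{lem:f-linf-bound} to bound $\|\vec{f}(\femvec{m}{h}{j},\nabla\femvec{m}{h}{j})\|_{\LB^2(\Omega)} \lsim 1 + \|\nabla\femvec{m}{h}{j}\|_{\LB^2(\Omega)}$, which together with Lemma \ref{discrete-bounds} keeps this contribution uniformly bounded and its interpolation error $O(h)$; the time-discretisation error from replacing $\boldsymbol{\varphi}(t_j)$ by $\boldsymbol{\varphi}(t)$ is $O(k)$ via the Lipschitz continuity of $\boldsymbol{\varphi}$ in $t$.

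The one term requiring extra care is the $\theta k \liprodt{\nabla \vec{v}_{h,k}}{\nabla(\vec{m}_{h,k}^-\times\boldsymbol{\varphi})}$ piece, since the best control we have on $\nabla\vec{v}_{h,k}$ is only $\sqrt{k}\,\lnormt{\nabla\vec{v}_{h,k}}{2}\lsim 1$ (when $\theta>1/2$); thus $\theta k \liprodt{\nabla\vec{v}_{h,k}}{\nabla(\vec{m}_{h,k}^-\times\boldsymbol{\varphi})} \lsim k \lnormt{\nabla\vec{v}_{h,k}}{2}\,\lnormt{\nabla(\vec{m}_{h,k}^-\times\boldsymbol{\varphi})}{2} \lsim \sqrt{k}\cdot\sqrt{k}\lnormt{\nabla\vec{v}_{h,k}}{2}\lsim\sqrt{k}$, which is $o(1)$ but not obviously $O(h)$. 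I expect this to be the main obstacle: one has to argue that this whole term is negligible as $(h,k)\to 0$, so strictly the right-hand side should read $O(h) + o(1)$, or one accepts $k = o(h^2)$-type coupling, or (most likely, following \cite{alouges2008-article}) one simply absorbs $\theta k \nabla\vec{v}_{h,k}$ into the overall error and notes it vanishes in the limit — the precise bookkeeping of this term, and confirming that all other discrepancies are genuinely $O(h)$ under the stated hypotheses, is where the real work lies; the rest is a routine term-by-term triangle-inequality estimate using the cited lemmas.
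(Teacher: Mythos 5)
Your overall approach is the same as the paper's: take $\boldsymbol{\psi}^{(j)}_h = I_{\V_h}(\femvec{m}{h}{j}\times\boldsymbol{\varphi}(t_j,\cdot))$ as the test function in \eqref{section-3-fem-equation}, verify it lies in $\W_h^{(j)}$, sum over $j$, and then control the error from removing the interpolation operator. The paper likewise decomposes the discrepancy into four integrals $I_1,\ldots,I_4$, each pairing a factor already bounded by Lemmas \ref{linf-bound}, \ref{discrete-bounds}, \ref{interpolant-bounds}, \ref{lem:f-linf-bound} against the interpolation error $\vec{m}_{h,k}^-\times\boldsymbol{\varphi} - I_{\V_h}(\vec{m}_{h,k}^-\times\boldsymbol{\varphi})$ (or its gradient), citing \cite{goldys2016-article} for $I_1, I_2, I_3$ and treating $I_4$ (the $\vec{f}$ term) exactly as you propose, via Lemma \ref{lem:f-linf-bound}. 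So the backbone of your argument is correct.

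However, the ``main obstacle'' you identify is a misreading of the lemma and is not actually an obstacle. The term $\theta k\, \langle \nabla\vec{v}_{h,k},\, \nabla(\vec{m}_{h,k}^-\times\boldsymbol{\varphi})\rangle_{\LB^2(\Omega_T)}$ is \emph{not} required to be $O(h)$: it sits on the left-hand side of the identity, inside the term $\alpha\,\langle \nabla(\vec{m}_{h,k}^- + \theta k \vec{v}_{h,k}),\, \nabla(\vec{m}_{h,k}^-\times\boldsymbol{\varphi})\rangle_{\LB^2(\Omega_T)}$, and is carried along unchanged. Lemma \ref{section-4-discrete-weak-formulation} only claims that the \emph{interpolation error} is $O(h)$, i.e.\ the relevant piece to bound is
\begin{equation*}
    \theta k\, \liprodt{\nabla\vec{v}_{h,k}}{\nabla\bigl(\vec{m}_{h,k}^-\times\boldsymbol{\varphi} - I_{\V_h}(\vec{m}_{h,k}^-\times\boldsymbol{\varphi})\bigr)} \lsim k\, \lnormt{\nabla\vec{v}_{h,k}}{2}\, O(h) \lsim \sqrt{k}\, h \lsim h,
\end{equation*}
using $\sqrt{k}\,\lnormt{\nabla\vec{v}_{h,k}}{2}\lsim 1$ from Lemma \ref{interpolant-bounds}. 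So this is genuinely $O(h)$, no extra coupling between $h$ and $k$ and no weakening to $o(1)$ is needed. The term $\theta k \nabla\vec{v}_{h,k}$ is only dropped in the next step, Lemma \ref{section-4-interpolant-weak-formulation}, precisely at the cost of enlarging the error to $O(h+k)$; conflating the two lemmas is what created the phantom difficulty. Once this is fixed, the rest of your argument is sound and reproduces the paper's proof.
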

\begin{proof}
Following the argument in \cite[Lemma 6.4]{goldys2016-article}, we deduce that
\begin{align*}
    \begin{split}
        \beta \liprodt{\vec{v}_{h,k}}{\vec{m}_{h,k}^- \times \boldsymbol{\varphi}} &- \liprodt{\vec{m}_{h,k}^- \times \vec{v}_{h,k}}{\vec{m}_{h,k}^- \times \boldsymbol{\varphi}} \\
        &+ \alpha \liprodt{\nabla (\vec{m}_{h,k}^- + \theta k \vec{v}_{h,k})}{\nabla (\vec{m}_{h,k}^- \times \boldsymbol{\varphi})} \\
        &+ \liprodt{\vec{m}_{h,k}^- \times \vec{f}(\vec{m}_{h,k}^-, \nabla \vec{m}_{h,k}^-)}{\vec{m}_{h,k}^- \times \boldsymbol{\varphi}} \\
        &\qquad = I_1 + I_2 + I_3 + I_4,
    \end{split}
\end{align*}
where
\begin{align*}
    I_1 &= \liprodt{\beta \vec{v}_{h,k}}{\vec{m}_{h,k}^- \times \boldsymbol{\varphi} - I_{\V_h}(\vec{m}_{h,k}^- \times \boldsymbol{\varphi})}, \\
    I_2 &= \liprodt{- \vec{m}_{h,k}^- \times \vec{v}_{h,k}}{\vec{m}_{h,k}^- \times \boldsymbol{\varphi} - I_{\V_h}(\vec{m}_{h,k}^- \times \boldsymbol{\varphi})}, \\
    I_3 &= \alpha \liprodt{\nabla (\vec{m}_{h,k}^- + \theta k \vec{v}_{h,k})}{\nabla (\vec{m}_{h,k}^- \times \boldsymbol{\varphi} - I_{\V_h}(\vec{m}_{h,k}^- \times \boldsymbol{\varphi}))}, \\
    I_4 &= \liprodt{\vec{m}_{h,k}^- \times \vec{f}(\vec{m}_{h,k}^-, \nabla \vec{m}_{h,k}^-)}{\vec{m}_{h,k}^- \times \boldsymbol{\varphi} - I_{\V_h}(\vec{m}_{h,k}^- \times \boldsymbol{\varphi})}.
\end{align*}
The integrals $I_1, I_2$ and $I_3$ have already been bounded in \cite{goldys2016-article}, so we focus on $I_4$. Using their argument and Lemmas \ref{lem:f-linf-bound} and \ref{interpolant-bounds} we obtain
\begin{align*}
    |I_4| &\lsim \lnormt{\vec{m}_{h,k}^-}{\infty} \lnormt{\vec{f}(\vec{m}_{h,k}^-, \nabla \vec{m}_{h,k}^-)}{2} \lnormt{\vec{m}_{h,k}^- \times \boldsymbol{\varphi} - I_{\V_h}(\vec{m}_{h,k}^- \times \boldsymbol{\varphi})}{2} \\
    &\lsim  \lnormt{\vec{m}_{h,k}^- \times \boldsymbol{\varphi} - I_{\V_h}(\vec{m}_{h,k}^- \times \boldsymbol{\varphi})}{2} \\
    &\lsim h,
\end{align*}
thus completing the proof.
\end{proof}

\begin{lemma} \label{section-4-interpolant-weak-formulation}
For any $\boldsymbol{\varphi} \in C_0^{\infty}(\Omega_T)$, as $h$ and $k$ go to zero,
\begin{align*}
    \begin{split}
        \beta \liprodt{\partial_t \vec{m}_{h,k}}{\vec{m}_{h,k} \times \boldsymbol{\varphi}} &- \liprodt{\vec{m}_{h,k} \times \partial_t \vec{m}_{h,k}}{\vec{m}_{h,k} \times \boldsymbol{\varphi}} + \alpha \liprodt{\nabla \vec{m}_{h,k}}{\nabla (\vec{m}_{h,k} \times \boldsymbol{\varphi})} \\
        &+ \liprodt{\vec{m}_{h,k} \times \vec{f}(\vec{m}_{h,k}, \nabla \vec{m}_{h,k})}{\vec{m}_{h,k} \times \boldsymbol{\varphi}} = O(h + k).
    \end{split}
\end{align*}
\end{lemma}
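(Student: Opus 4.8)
The plan is to derive this identity from Lemma~\ref{section-4-discrete-weak-formulation} by replacing, in every term, the piecewise-constant-in-time objects $\vec{m}_{h,k}^-$ and $\vec{v}_{h,k}$ by the time-continuous interpolant $\vec{m}_{h,k}$ and its time derivative $\partial_t\vec{m}_{h,k}$, and by showing that each such replacement perturbs the corresponding integral over $\Omega_T$ by a quantity that tends to zero (by a positive power of $k$, plus the $O(h)$ already present) as $(h,k)\to(0,0)$. The tools are the interpolation estimates \eqref{discrete-interpolant-error-2}--\eqref{discrete-interpolant-error-3} of Lemma~\ref{discrete-interpolant-error}, the uniform bounds of Lemma~\ref{interpolant-bounds}, the discrete energy bound of Lemma~\ref{discrete-bounds}, the pointwise bound of Lemma~\ref{linf-bound} (which also yields $\lnormt{\vec{m}_{h,k}}{\infty}\lsim 1$, since $\vec{m}_{h,k}(t,\cdot)$ is at each $t$ a convex combination of the nodal values of $\femvec{m}{h}{j}$ and $\femvec{m}{h}{j+1}$), and the fact that $\boldsymbol{\varphi}\in C_0^\infty(\Omega_T)$ makes $\boldsymbol{\varphi}$ and $\nabla\boldsymbol{\varphi}$ bounded on $\Omega_T$.

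For the two terms carrying a time derivative I would write $\partial_t\vec{m}_{h,k}=(\partial_t\vec{m}_{h,k}-\vec{v}_{h,k})+\vec{v}_{h,k}$ and $\vec{m}_{h,k}=(\vec{m}_{h,k}-\vec{m}_{h,k}^-)+\vec{m}_{h,k}^-$. The difference between $\beta\liprodt{\partial_t\vec{m}_{h,k}}{\vec{m}_{h,k}\times\boldsymbol{\varphi}}$ and the quantity $\beta\liprodt{\vec{v}_{h,k}}{\vec{m}_{h,k}^-\times\boldsymbol{\varphi}}$ from Lemma~\ref{section-4-discrete-weak-formulation} then splits into a piece bounded by $\lnormt{\partial_t\vec{m}_{h,k}-\vec{v}_{h,k}}{1}$ against an $\LB^\infty(\Omega_T)$ factor, which is $O(k)$ by \eqref{discrete-interpolant-error-3}, and a piece bounded by $\lnormt{\vec{v}_{h,k}}{2}\,\lnormt{\vec{m}_{h,k}-\vec{m}_{h,k}^-}{2}$, which is $O(k)$ by Lemma~\ref{interpolant-bounds} and \eqref{discrete-interpolant-error-2}. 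The gyroscopic term $\liprodt{\vec{m}_{h,k}\times\partial_t\vec{m}_{h,k}}{\vec{m}_{h,k}\times\boldsymbol{\varphi}}$ is treated the same way, expanding also the first factor. For the nonlinear term one compares $\vec{f}(\vec{m}_{h,k},\nabla\vec{m}_{h,k})$ with $\vec{f}(\vec{m}_{h,k}^-,\nabla\vec{m}_{h,k}^-)$ using the separable form (A1), the $\LB^\infty$ bounds on both magnetisations, the linearity of $\vec{g}_1,\vec{g}_2$, and the local Lipschitz continuity of the $\vec{f}_i$ (an $\HB^1\cap\LB^\infty$ version of Lemma~\ref{lem:locally-lipschitz}, in the spirit of Lemma~\ref{lem:f-linf-bound}), which reduces the error to $\lnormt{\vec{m}_{h,k}-\vec{m}_{h,k}^-}{2}$ and $\lnormt{\nabla(\vec{m}_{h,k}-\vec{m}_{h,k}^-)}{2}$.

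The delicate term, and the main obstacle, is the Dirichlet one. Writing $\nabla\vec{m}_{h,k}=\nabla\vec{m}_{h,k}^-+\bigl(\nabla\vec{m}_{h,k}-\nabla\vec{m}_{h,k}^-\bigr)$ and expanding $\nabla(\vec{m}_{h,k}\times\boldsymbol{\varphi})$ similarly, the difference with $\alpha\liprodt{\nabla(\vec{m}_{h,k}^-+\theta k\vec{v}_{h,k})}{\nabla(\vec{m}_{h,k}^-\times\boldsymbol{\varphi})}$ reduces to integrals pairing $\nabla(\vec{m}_{h,k}-\vec{m}_{h,k}^-)$ with $\LB^2(\Omega_T)$-bounded gradients, together with the term $-\alpha\theta k\liprodt{\nabla\vec{v}_{h,k}}{\nabla(\vec{m}_{h,k}^-\times\boldsymbol{\varphi})}$. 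Here Lemma~\ref{discrete-interpolant-error} does not apply directly: from the definition of the interpolant and the update in Step~3 one has $\lnormt{\nabla(\vec{m}_{h,k}-\vec{m}_{h,k}^-)}{2}^2\lsim k^3\sum_i\lnorm{\nabla\femvec{v}{h}{i}}{2}^2$, which is controlled only through the discrete energy bound $k^2(2\theta-1)\sum_i\lnorm{\nabla\femvec{v}{h}{i}}{2}^2\lsim1$ of Lemma~\ref{discrete-bounds}, and likewise $\theta k\,\lnormt{\nabla\vec{v}_{h,k}}{2}$ is controlled only through $k(2\theta-1)\lnormt{\nabla\vec{v}_{h,k}}{2}^2\lsim1$ of Lemma~\ref{interpolant-bounds}; both estimates use $\theta>\tfrac12$ in an essential way and contribute an error of order $k^{1/2}$. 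Collecting the $O(h)$ from Lemma~\ref{section-4-discrete-weak-formulation} with all these replacement errors yields a right-hand side that vanishes as $(h,k)\to(0,0)$, which is the content of the $O(h+k)$ in the statement and is all that is needed for the subsequent passage to the limit.
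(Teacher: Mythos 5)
Your overall strategy matches the paper's: both proofs start from Lemma~\ref{section-4-discrete-weak-formulation} and estimate the error incurred by replacing $\vec{v}_{h,k}$ with $\partial_t\vec{m}_{h,k}$ and $\vec{m}_{h,k}^-$ with $\vec{m}_{h,k}$ in each integral. Your treatment of the time-derivative terms (via~\eqref{discrete-interpolant-error-2}, \eqref{discrete-interpolant-error-3} and Lemma~\ref{interpolant-bounds}) and of the $\theta k\,\nabla\vec{v}_{h,k}$ contribution (via the $k(2\theta-1)\lnormt{\nabla\vec{v}_{h,k}}{2}^2\lsim 1$ bound) is correct and in the spirit of the paper, which delegates those pieces to \cite[Lemma~6.5]{goldys2016-article}.

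The gap is in your handling of the gradient of the time-difference. You invoke the estimate $\lnormt{\nabla(\vec{m}_{h,k}-\vec{m}_{h,k}^-)}{2}^2\lsim k^3\sum_i\lnorm{\nabla\femvec{v}{h}{i}}{2}^2$ as following ``from the definition of the interpolant and the update in Step~3''. For the unnormalised update this would be an identity, but Step~3 applies the nodal normalisation $\vec{a}\mapsto\vec{a}/|\vec{a}|$, and then $\nabla(\femvec{m}{h}{j+1}-\femvec{m}{h}{j})$ is not controlled by $k\nabla\femvec{v}{h}{j}$ alone: the difference of normalised nodal values contributes additional terms of the type $k^2|\femvec{v}{h}{j}|^2\,|\nabla\femvec{m}{h}{j}|$ and $k^2|\femvec{v}{h}{j}|\,|\nabla\femvec{v}{h}{j}|$, which involve $\lnorm{\femvec{v}{h}{j}}{\infty}$ and are not bounded by the available lemmas (Lemma~\ref{discrete-interpolant-error} deliberately lists $\LB^2$ bounds only, with no gradient-of-difference estimate, and Theorem~\ref{section-2-bartels-theorem} controls only the one-sided norm $\lnorm{\nabla\femvec{m}{h}{j+1}}{2}\le\lnorm{\nabla(\femvec{m}{h}{j}+k\femvec{v}{h}{j})}{2}$, not the difference). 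Without this estimate your treatment of both the Dirichlet term and the $\vec{g}_j(\nabla\cdot)$ piece of the nonlinear term collapses.

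The paper avoids ever needing $\lnormt{\nabla(\vec{m}_{h,k}-\vec{m}_{h,k}^-)}{2}$. For the Dirichlet term one uses the cyclic cancellation $\liprodt{\nabla\vec{w}}{\nabla\vec{w}\times\boldsymbol\varphi}=0$ so only pairings against $\vec{m}\times\nabla\boldsymbol\varphi$ survive, after which the difference $\vec{m}_{h,k}-\vec{m}_{h,k}^-$ can be moved off the gradient slot by integration by parts against $\boldsymbol\varphi\in C_0^\infty(\Omega_T)$, and one falls back on $\lnormt{\vec{m}_{h,k}-\vec{m}_{h,k}^-}{2}\lsim k$. For the nonlinear piece, the paper does exactly this explicitly: since $\vec{g}_j$ is linear it is a constant matrix, which is passed to the other slot of the inner product, and integrating by parts ``as for $I_3$'' again replaces $\nabla(\vec{m}_{h,k}-\vec{m}_{h,k}^-)$ with $\vec{m}_{h,k}-\vec{m}_{h,k}^-$ paired against bounded quantities. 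This structural observation — cross-product cancellation plus integration by parts to shed the gradient from the time-difference — is the essential ingredient missing from your argument.
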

\begin{proof}
Using Lemma \ref{section-4-discrete-weak-formulation} we deduce that
\begin{align*}
    \begin{split}
        \beta \liprodt{\partial_t \vec{m}_{h,k}}{\vec{m}_{h,k} \times \boldsymbol{\varphi}} &- \liprodt{\vec{m}_{h,k} \times \partial_t \vec{m}_{h,k}}{\vec{m}_{h,k} \times \boldsymbol{\varphi}} + \alpha \liprodt{\nabla \vec{m}_{h,k}}{\nabla (\vec{m}_{h,k} \times \boldsymbol{\varphi})} \\
        &+ \liprodt{\vec{m}_{h,k} \times \vec{f}(\vec{m}_{h,k}, \nabla \vec{m}_{h,k})}{\vec{m}_{h,k} \times \boldsymbol{\varphi}}  = I_1 + I_2 + I_3 + I_4 + O(h),
    \end{split}
\end{align*}
where
\begin{align*}
    I_1 &= \beta \liprodt{\partial_t \vec{m}_{h,k}}{\vec{m}_{h,k} \times \boldsymbol{\varphi}} - \beta \liprodt{\vec{v}_{h,k}}{\vec{m}_{h,k}^- \times \boldsymbol{\varphi}}, \\
    I_2 &= - \liprodt{\vec{m}_{h,k} \times \partial_t \vec{m}_{h,k}}{\vec{m}_{h,k} \times \boldsymbol{\varphi}} + \liprodt{\vec{m}_{h,k}^- \times \vec{v}_{h,k}}{\vec{m}_{h,k}^- \times \boldsymbol{\varphi}}, \\
    I_3 &= \alpha \liprodt{\nabla (\vec{m}_{h,k}^- + \theta k \vec{v}_{h,k})}{\nabla (\vec{m}_{h,k}^- \times \boldsymbol{\varphi})} - \alpha \liprodt{\nabla \vec{m}_{h,k}}{\nabla (\vec{m}_{h,k} \times \boldsymbol{\varphi})}, \\
    I_4 &= \liprodt{\vec{m}_{h,k}^- \times \vec{f}(\vec{m}_{h,k}^-, \nabla \vec{m}_{h,k}^-)}{\vec{m}_{h,k}^- \times \boldsymbol{\varphi}} - \liprodt{\vec{m}_{h,k} \times \vec{f}(\vec{m}_{h,k}, \nabla \vec{m}_{h,k})}{\vec{m}_{h,k} \times \boldsymbol{\varphi}}.
\end{align*}
We now proceed to show that $I_i = O(k)$ for $i=1,2,3,4$. The proofs for $I_1, I_2$ and $I_3$ have been given in \cite[Lemma 6.5]{goldys2016-article}. For $I_4$, we first note that
\begin{align*}
    &\Big|\liprodt{\vec{m}_{h,k}^- \times \vec{f}(\vec{m}_{h,k}^-, \nabla \vec{m}_{h,k}^-)}{\vec{m}_{h,k}^- \times \boldsymbol{\varphi}} - \liprodt{\vec{m}_{h,k} \times \vec{f}(\vec{m}_{h,k},\nabla \vec{m}_{h,k})}{\vec{m}_{h,k} \times \boldsymbol{\varphi}}\Big| \\
    &\qquad \lsim \Big|\liprodt{\vec{m}_{h,k}^- \times \vec{f}(\vec{m}_{h,k}^-, \nabla \vec{m}_{h,k}^-)}{\vec{m}_{h,k}^- \times \boldsymbol{\varphi}} - \liprodt{\vec{m}_{h,k} \times \vec{f}(\vec{m}_{h,k}^-, \nabla \vec{m}_{h,k}^-)}{\vec{m}_{h,k} \times \boldsymbol{\varphi}}\Big| \\
    &\qquad \qquad + \Big|\liprodt{\vec{m}_{h,k} \times \vec{f}(\vec{m}_{h,k}^-, \nabla \vec{m}_{h,k}^-)}{\vec{m}_{h,k} \times \boldsymbol{\varphi}} \\
    &\qquad \qquad \qquad- \liprodt{\vec{m}_{h,k} \times \vec{f}(\vec{m}_{h,k}, \nabla \vec{m}_{h,k})}{\vec{m}_{h,k} \times \boldsymbol{\varphi}}\Big|.
\end{align*}
The first expression tends to zero for the same reason that $I_1$ and $I_2$ do, by making use of Lemma \ref{lem:f-linf-bound}. We now direct our attention to the second expression, which is equivalent to analysing the convergence of 
\begin{align*}
    &\Big|\liprodt{\vec{f}(\vec{m}_{h,k}^-, \nabla \vec{m}_{h,k}^-)}{(\vec{m}_{h,k} \times \boldsymbol{\varphi}) \times \vec{m}_{h,k}} - \liprodt{\vec{f}(\vec{m}_{h,k}, \nabla \vec{m}_{h,k})}{(\vec{m}_{h,k} \times \boldsymbol{\varphi}) \times \vec{m}_{h,k}}\Big| \\
    &\qquad \lsim \Big|\liprodt{\vec{f}_1(\vec{m}_{h,k}^-)}{(\vec{m}_{h,k} \times \boldsymbol{\varphi}) \times \vec{m}_{h,k}} - \liprodt{\vec{f}_1(\vec{m}_{h,k})}{(\vec{m}_{h,k} \times \boldsymbol{\varphi}) \times \vec{m}_{h,k}}\Big| \\
    &\qquad \qquad + \Big|\liprodt{\vec{f}_2(\vec{m}_{h,k}^-) \times \vec{g}_1(\nabla \vec{m}_{h,k}^-)}{(\vec{m}_{h,k} \times \boldsymbol{\varphi}) \times \vec{m}_{h,k}} \\
    &\qquad \qquad \qquad - \liprodt{\vec{f}(\vec{m}_{h,k}) \times \vec{g}_1(\nabla \vec{m}_{h,k})}{(\vec{m}_{h,k} \times \boldsymbol{\varphi}) \times \vec{m}_{h,k}}\Big| \\
    &\qquad \qquad + \Big|\liprodt{\vec{f}_3(\vec{m}_{h,k}^-) \times (\vec{f}_4(\vec{m}_{h,k}^-) \times \vec{g}_2(\nabla \vec{m}_{h,k}^-))}{(\vec{m}_{h,k} \times \boldsymbol{\varphi}) \times \vec{m}_{h,k}} \\
    &\qquad \qquad \qquad - \liprodt{\vec{f}_3(\vec{m}_{h,k}) \times (\vec{f}_4(\vec{m}_{h,k}) \times \vec{g}_2(\nabla \vec{m}_{h,k}))}{(\vec{m}_{h,k} \times \boldsymbol{\varphi}) \times \vec{m}_{h,k}}\Big|
\end{align*}
The first component term of $\vec{f}$, i.e., $\vec{f}_1$, is $O(k)$ due to Lemma \ref{discrete-interpolant-error} as $\vec{f}_1$ is $C^1$ by Assumption (A1) and thus is locally Lipschitz by virtue of Lemma \ref{linf-bound}. For the second component we obtain
\begin{align*}
    &\Big|\liprodt{\vec{f}_2(\vec{m}_{h,k}^-) \times \vec{g}_1(\nabla \vec{m}_{h,k}^-)}{(\vec{m}_{h,k} \times \boldsymbol{\varphi}) \times \vec{m}_{h,k}} \\
    &\qquad - \liprodt{\vec{f}(\vec{m}_{h,k}) \times \vec{g}_1(\nabla \vec{m}_{h,k})}{(\vec{m}_{h,k} \times \boldsymbol{\varphi}) \times \vec{m}_{h,k}}\Big| \\
    &\qquad \qquad \lsim \Big|\liprodt{(\vec{f}_2(\vec{m}_{h,k}^-) - \vec{f}_2(\vec{m}_{h,k})) \times \vec{g}_1(\nabla \vec{m}_{h,k}^-)}{(\vec{m}_{h,k} \times \boldsymbol{\varphi}) \times \vec{m}_{h,k}}\Big| \\
    &\qquad \qquad \qquad + \Big|\liprodt{\vec{f}_2(\vec{m}_{h,k}) \times (\vec{g}_1(\nabla \vec{m}_{h,k}^-) - \vec{g}_1(\nabla \vec{m}_{h,k}))}{(\vec{m}_{h,k} \times \boldsymbol{\varphi}) \times \vec{m}_{h,k}}\Big| \\
    &\qquad \qquad \lsim \lnorm{\vec{m}_{h,k}}{\infty}^2 \lnorm{\boldsymbol{\varphi}}{\infty} \lnorm{\vec{g}_1(\nabla \vec{m}_{h,k})}{2} \lnorm{\vec{f}_2(\vec{m}_{h,k}^-) - \vec{f}_2(\vec{m}_{h,k})}{2} \\
    &\qquad \qquad \qquad + \Big|\liprodt{\vec{g}_1(\nabla \vec{m}_{h,k}^-) - \vec{g}_1(\nabla\vec{m}_{h,k})}{((\vec{m}_{h,k} \times \boldsymbol{\varphi}) \times \vec{m}_{h,k}) \times \vec{f}_2(\vec{m}_{h,k})}\Big|.
\end{align*}
The former term in this expression is $O(k)$ by using the Lipschitz property of $\vec{f}_2$ as before. The latter term is also $O(k)$ because $\vec{g}_1$ is linear, and thus can be represented by a matrix. Taking the matrix to the other side of the inner-product and using integration by parts as for $I_3$, we get that this term is $O(k)$ as well. The third component similarly follows suit.
\end{proof}

Lemma \ref{interpolant-bounds}, the Banach-Alaoglu theorem and compactness arguments imply the existence of a sub-sequences $\{\vec{m}_{h,k}\}$ and $\{\vec{v}_{h,k}\}$ that converge in the following senses as $(h,k) \to (0,0)$:
\begin{align}
        \vec{m}_{h,k} \weakto \vec{m} &\qquad \text{in } \HB^1(\Omega_T), \label{section-4-convergence-1} \\[1ex] 
        \vec{m}_{h,k} \to \vec{m} &\qquad \text{in } \LB^2(\Omega_T), \label{section-4-convergence-2} \\[1ex] 
        \vec{v}_{h,k} \weakto \vec{v} &\qquad \text{in } \LB^2(\Omega_T). \label{section-4-convergence-3}
\end{align}
From Lemma \ref{section-4-interpolant-weak-formulation} we know that for all $\boldsymbol{\varphi} \in C_0^{\infty}(\Omega_T)$
\begin{align*}
    \begin{split}
        \beta \liprodt{\partial_t \vec{m}_{h,k}}{\vec{m}_{h,k} \times \boldsymbol{\varphi}} &- \liprodt{\vec{m}_{h,k} \times \partial_t \vec{m}_{h,k}}{\vec{m}_{h,k} \times \boldsymbol{\varphi}} + \alpha \liprodt{\nabla \vec{m}_{h,k} }{\nabla (\vec{m}_{h,k} \times \boldsymbol{\varphi})} \\
        &+ \liprodt{\vec{m}_{h,k} \times \vec{f}(\vec{m}_{h,k}, \nabla \vec{m}_{h,k})}{\vec{m}_{h,k} \times \boldsymbol{\varphi}}  = O(h + k).
    \end{split}
\end{align*}
Following the arguments in \cite[Theorem 6.8]{goldys2016-article} and \cite[Theorem 4.5]{le2013-article}, we can prove that the weak formulation above is satisfied by the limit $\vec{m}$ as $(h,k) \to (0,0)$, except when it comes to the inner-product containing the non-homogeneous term. For this, we use the same strategy as in Lemma \ref{section-4-interpolant-weak-formulation} to show convergence component-wise. For the first component, using the same algebra as before we obtain
\begin{align*}
    &\Big|\liprodt{\vec{f}_1(\vec{m}_{h,k})}{(\vec{m} \times \boldsymbol{\varphi}) \times \vec{m}} - \liprodt{\vec{f}_1(\vec{m})}{(\vec{m} \times \boldsymbol{\varphi}) \times \vec{m}}\Big| \\
    &\qquad \lsim \lnorm{\vec{f}_1(\vec{m}_{h,k}) - \vec{f}_1(\vec{m})}{2} \to 0,
\end{align*}
by the locally Lipschitz property of $\vec{f}_1$. Similarly, 
\begin{align*}
    &\Big|\liprodt{\vec{f}_2(\vec{m}_{h,k}) \times \vec{g}_1(\nabla \vec{m}_{h,k})}{(\vec{m} \times \boldsymbol{\varphi}) \times \vec{m}} - \liprodt{\vec{f}(\vec{m}) \times \vec{g}_1(\nabla \vec{m})}{(\vec{m} \times \boldsymbol{\varphi}) \times \vec{m}}\Big| \\
    &\qquad \lsim \lnorm{\vec{f}_2(\vec{m}_{h,k}) - \vec{f}_2(\vec{m})}{2} \\
    &\qquad \qquad + \Big|\liprodt{\vec{g}_1(\nabla \vec{m}_{h,k}) - \vec{g}_1(\nabla\vec{m})}{((\vec{m} \times \boldsymbol{\varphi}) \times \vec{m}) \times \vec{f}_2(\vec{m})}\Big| \to 0,
\end{align*}
by the locally Lipschitz property of $\vec{f}_2$ and weak convergence. The same can be shown for the third component as well. Hence as $(h,k) \to (0,0)$, we conclude that $\vec{m}$ satisfies 
\begin{align*}
    \begin{split}
        \beta \liprodt{\partial_t \vec{m}}{\vec{m} \times \boldsymbol{\varphi}} - \liprodt{\vec{m} \times \partial_t \vec{m}}{\vec{m} \times \boldsymbol{\varphi}} &+ \alpha \liprodt{\nabla \vec{m} }{\nabla (\vec{m} \times \boldsymbol{\varphi})} \\
        &+ \liprodt{\vec{m} \times \vec{f}(\vec{m}, \nabla \vec{m})}{\vec{m} \times \boldsymbol{\varphi}} = 0,
    \end{split}
\end{align*}
for all $\boldsymbol{\varphi} \in C_0^{\infty}(\Omega_T)$.

Furthermore, $|\vec{m}| = 1$ by (\ref{discrete-interpolant-error-4}) and the initial data is attained by $\vec{m}$ because of the limiting properties of the interpolation operator. This completes the proof.

\section{Application and Numerical Experiments} \label{section-applications}

According to Garello et al. \cite{garello2013-article}, ``Memory and logic spintronic devices rely on the generation of spin torques to control the magnetisation of nanoscale elements using electric currents''. Accounting for these spin-torque effects requires appending a spin-torque term $\vec{T}$ to the Landau-Lifshitz-Gilbert equation
\begin{equation*}
    \frac{\partial \vec{m}}{\partial t} = \alpha\vec{m} \times \Delta \vec{m} - \beta \vec{m} \times \frac{\partial \vec{m}}{\partial t} + \vec{T},
\end{equation*}
in other words, $\vec{f} = \vec{T}$ here.

\subsection{Example 1} \label{sec:example-1}
The spin-transfer torque (STT) method produces spin torques by using a current to transfer spin angular momentum using a `polariser-ferromagnetic layer' and is represented by $\vec{T} = \vec{T}_{\rm STT}$ \cite{li2004-article, meo2022-article, ralph2008-article}, where
\begin{equation*}
    \vec{T}_{\rm STT}\left(\vec{m}, \nabla \vec{m}\right) = \lambda \vec{m} \times (\vec{j} \cdot \nabla \vec{m}) + \mu \vec{m} \times \left(\vec{m} \times (\vec{j} \cdot \nabla \vec{m})\right),
\end{equation*}
and $\vec{j}$ is a constant unit vector denoting the direction of the current. Choosing $\vec{f}_1 = \vec{0}$ but $\vec{f}_2(\vec{a}) = \vec{f}_3(\vec{a}) = \vec{f}_4(\vec{a}) = \vec{a}$ and $\vec{g}_1(\vec{B}) = \vec{g}_2(\vec{B}) = \vec{j} \cdot \vec{B}$, while ignoring the constants for simplicity, it is obvious that these terms satisfy Assumptions (A1) and (A2).

\subsection{Example 2} \label{sec:example-2}
More recent experiments have shown such a `polariser-ferromagnetic layer' is not necessarily required and spin-torques can be induced by exploiting other physical effects. These torques are called `spin-orbit torques' (SOT) and are represented by $\vec{T} = \vec{T}_{\rm SOT}$ \cite{garello2013-article}, where
\begin{equation*}
    \vec{T}_{\rm SOT}(\vec{m}) = \vec{T}^{\perp}(\vec{m}) + \vec{T}^{\parallel}(\vec{m}).
\end{equation*}
Here we have
\begin{align*}
    \vec{T}^{\parallel}(\vec{m}) &= \left[c_1 + c_2 \left|\vec{\hat{k}} \times \vec{m}\right|^2 + c_3 \left|\vec{\hat{k}} \times \vec{m}\right|^4\right] \left(\vec{\hat{\jmath}} \times \vec{m}\right) + \left[c_4 + c_5 \left|\vec{\hat{k}} \times \vec{m}\right|^2\right] \vec{m} \times \left(\vec{\hat{k}} \times \vec{m}\right) \left(\vec{m} \cdot \vec{\hat{\imath}}\right)
\end{align*}
and
\begin{align*}
    \vec{T}^{\perp}(\vec{m}) &= c_6 \vec{m} \times \left(\vec{\hat{\jmath}} \times \vec{m}\right) + \left[c_7 + c_8 \left|\vec{\hat{k}} \times \vec{m}\right|^2\right] \left(\vec{\hat{k}} \times \vec{m}\right) (\vec{m} \cdot \vec{\hat{\imath}}),
\end{align*}
where $c_1, \ldots, c_8$ are experimentally determined physical constants and $\vec{\hat{\imath}}, \vec{\hat{\jmath}}, \vec{\hat{k}}$ are the standard basis vectors in $\R^3$. Choosing $\vec{f}_1(\vec{a}) = \vec{T}(\vec{a})$ and $\vec{f}_2, \vec{f}_3, \vec{f}_4, \vec{g}_1, \vec{g}_2 = \vec{0}$, while ignoring the constants for simplicity, it is clear that this function satisfies Assumption (A1) and the orthogonality property of (A2). We now show that it satisfies the growth conditions of Assumption (A2). First, we obtain
\begin{align*}
    |\vec{T}^{\perp}(\vec{a})| \lsim |\vec{a}|^2 + (1 + |\vec{a}|^2) |\vec{a}|^2 \lsim 1 + |\vec{a}|^4,
\end{align*}
by Young's inequality. Moreover, taking the gradient, we have
\begin{align*}
    |\nabla \vec{T}^{\perp}(\vec{a})| &= \Big| c_6 \nabla \vec{a} \times \left(\vec{\hat{\jmath}} \times \vec{a}\right) + c_6 \vec{a} \times \left(\vec{\hat{\jmath}} \times \nabla \vec{a}\right) + \left[c_7 + 2 c_8 (\vec{\hat{k}} \times \nabla \vec{a}) \cdot (\vec{\hat{k}} \times \vec{a})\right] \left(\vec{\hat{k}} \times \vec{a}\right) (\vec{a} \cdot \vec{\hat{\imath}}) \\
    &\qquad + \left[c_7 + c_8 \left|\vec{\hat{k}} \times \vec{a}\right|^2\right] \left(\vec{\hat{k}} \times \nabla \vec{a}\right) \left(\vec{a} \cdot \vec{\hat{\imath}}\right) + \left[c_7 + c_8 \left|\vec{\hat{k}} \times \vec{a}\right|^2\right] \left(\vec{\hat{k}} \times \vec{a}\right) \left(\nabla \vec{a} \cdot \vec{\hat{\imath}}\right)\Big|.
\end{align*}
As $\nabla \vec{a} = I$, we also obtain
\begin{align*}
    |\nabla \vec{T}^{\perp}(\vec{a})| &\lsim 1 + |\vec{a}| + (1 + |\vec{a}|) |\vec{a}|^2 + (1 + |\vec{a}|^2) |\vec{a}| + (1 + |\vec{a}|^2) |\vec{a}| \lsim 1 + |\vec{a}|^3 \lsim 1 + |\vec{a}|^4.
\end{align*}
In the same way, it can be shown that the parallel term satisfies
\begin{equation*}
    |\vec{T}^{\parallel}(\vec{a})| + |\nabla \vec{T}^{\parallel}(\vec{a})| \lsim 1 + |\vec{a}|^5.
\end{equation*}

\subsection{Numerical Experiments}
In our experiments we choose $\Omega = \left(-\frac{1}{2}, \frac{1}{2}\right) \times \left(-\frac{1}{2}, \frac{1}{2}\right)$ and
\begin{equation*}
    \vec{m}_0(\vec{x}) = 
    \begin{cases}
        (4 A\vec{x}, A^2 - 4|\vec{x}|^2) / (A^2 + 4|\vec{x}|^2), & |\vec{x}| < \frac{1}{4}, \\
        (-4  A\vec{x}, A^2 - 4 |\vec{x}|^2) / (A^2 + 4 |\vec{x}|^2), & \frac{1}{4} \leq |\vec{x}| < \frac{1}{2}, \\
        (-\vec{x}, 0) / |\vec{x}|, & |\vec{x}| \geq \frac{1}{2},
    \end{cases}
\end{equation*}
where $A = (1 - 4|\vec{x}|)^4$. We carry out the experiments for both examples presented in Subsections \ref{sec:example-1} and \ref{sec:example-2}, namely $\vec{T}_{\rm STT}$ and $\vec{T}_{\rm SOT}$. We set all constants to 1 and choose $\vec{j} = (1,0)^T$. We set our $\theta$-scheme parameter to be $\theta = \frac{3}{4}$ with $T = 5$ and $N = 1000$. Images of the simulation are given in Figures \ref{fig:stt-experiment} and \ref{fig:sot-experiment} while a plot of the exchange energy for both cases are given in Figure \ref{fig:energy-plot}. Figures \ref{fig:stt-experiment} and \ref{fig:sot-experiment} show that the magnetisation vectors will eventually line up in one direction, which corresponds to some local minimiser of the total energy (i.e., the free energy) of the system.

\begin{figure}
\centering
\subfloat[$t=0$]{\includegraphics[trim={35cm 5cm 35cm 5cm}, clip=true, scale=0.15]{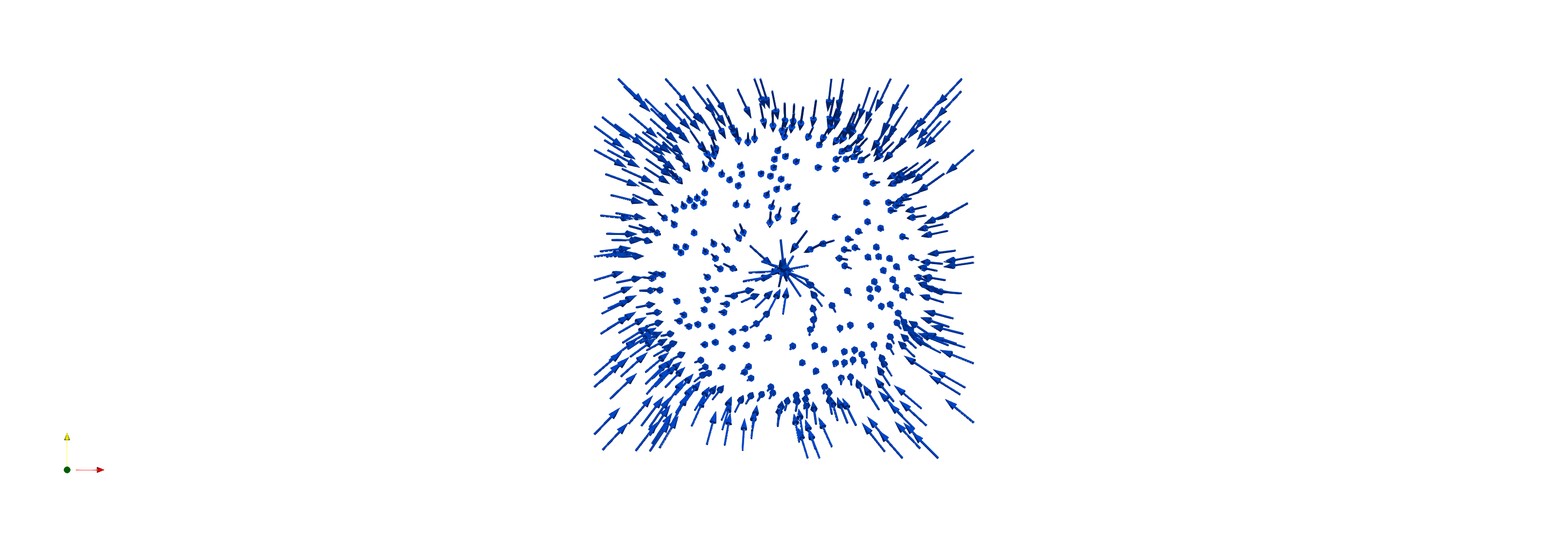}} 
\subfloat[$t=0.3$]{\includegraphics[trim={35cm 5cm 35cm 5cm}, clip=true, scale=0.15]{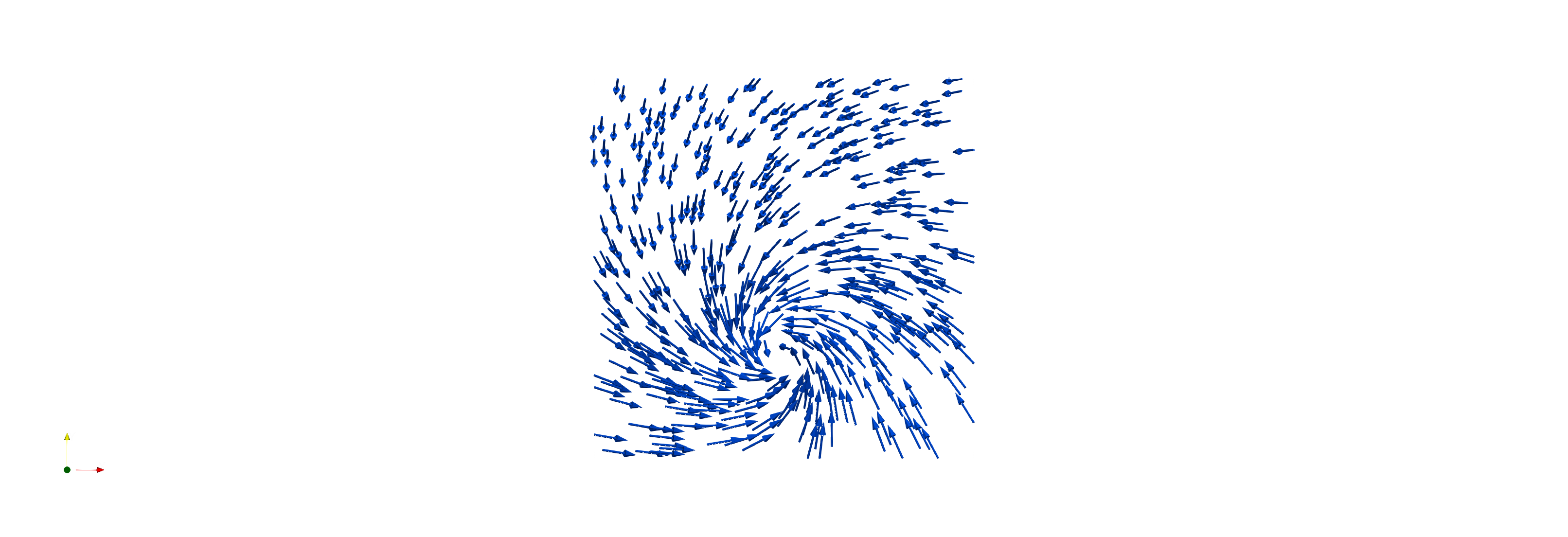}}\\
\subfloat[$t=0.6$]{\includegraphics[trim={35cm 5cm 35cm 5cm}, clip=true, scale=0.15]{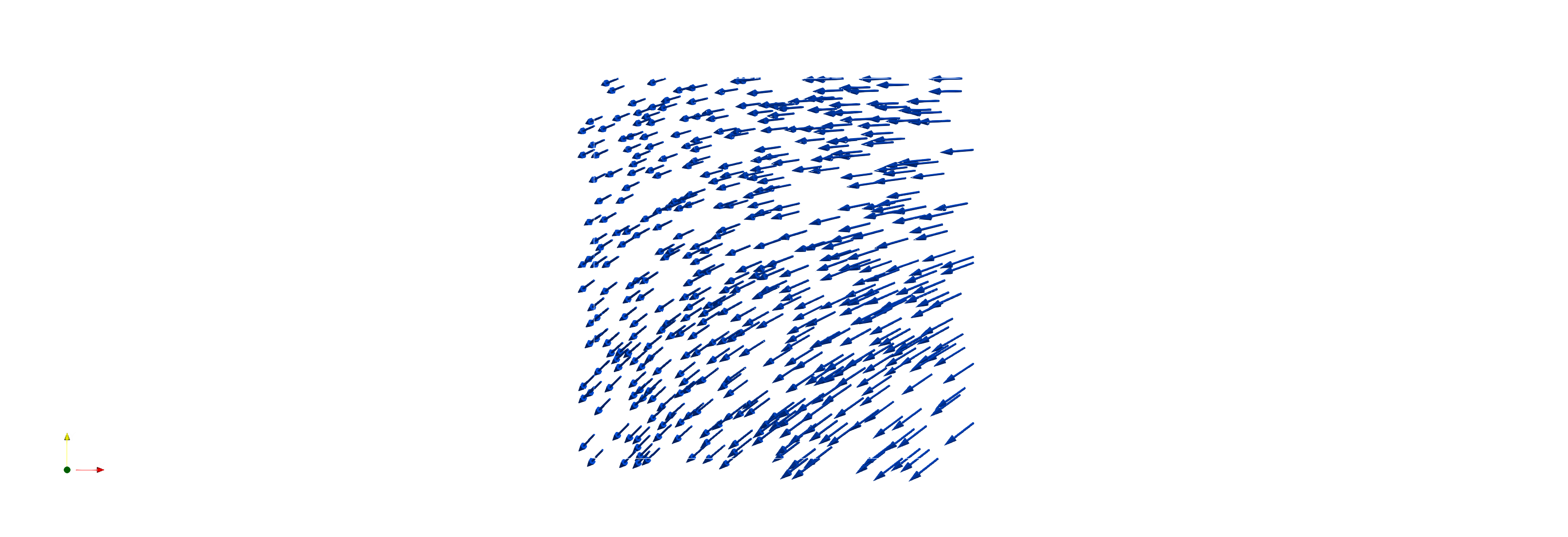}}
\subfloat[$t=0.9$]{\includegraphics[trim={35cm 5cm 35cm 5cm}, clip=true, scale=0.15]{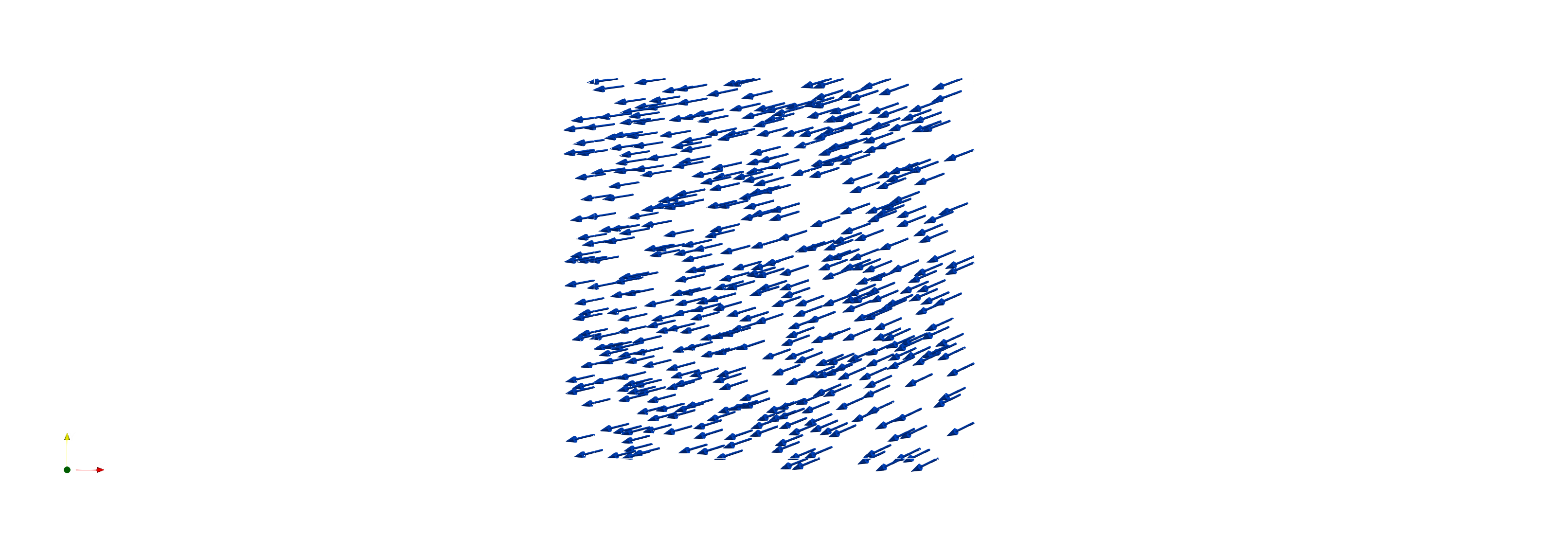}} 
\caption{Spin Transfer Torque}
\label{fig:stt-experiment}
\end{figure}

\begin{figure}
\centering
\subfloat[$t=0$]{\includegraphics[trim={35cm 5cm 35cm 5cm}, clip=true, scale=0.15]{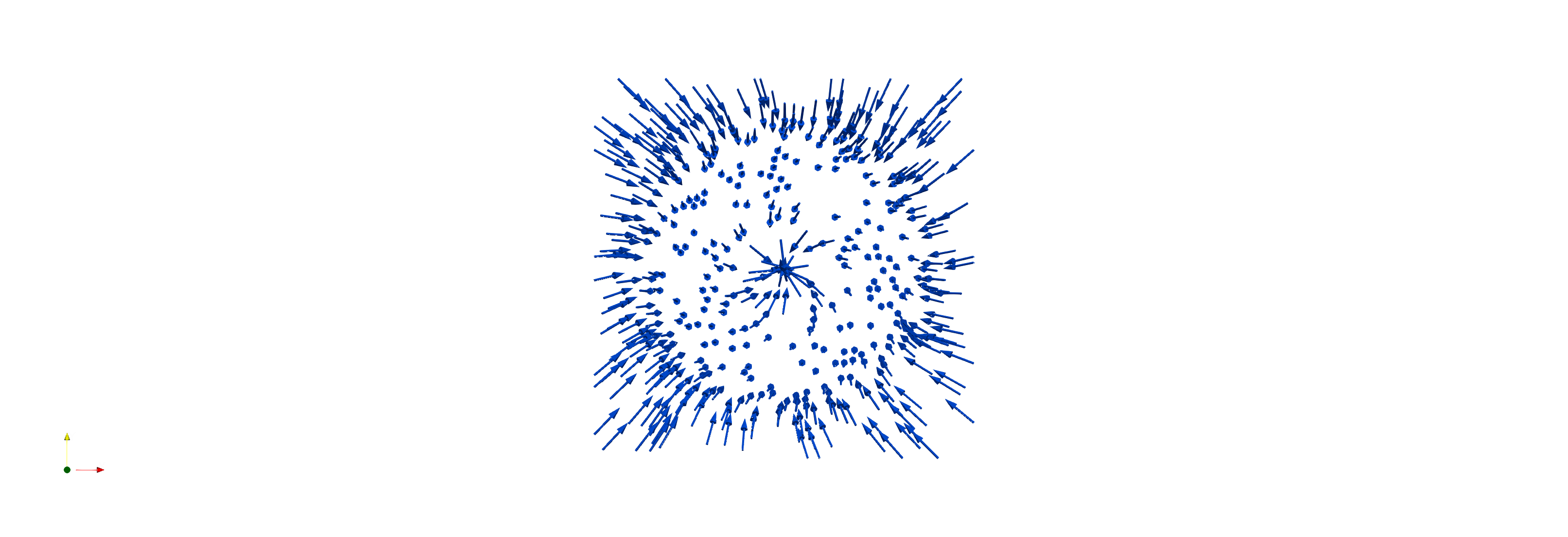}} 
\subfloat[$t=0.6$]{\includegraphics[trim={35cm 5cm 35cm 5cm}, clip=true, scale=0.15]{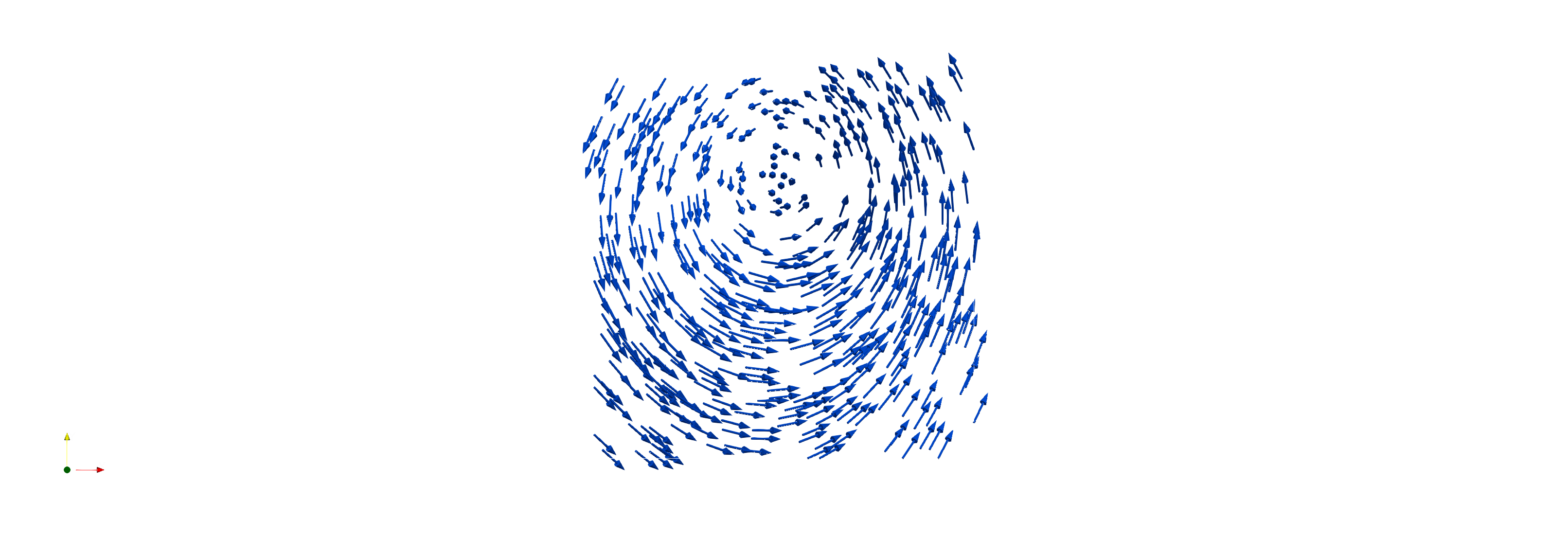}}\\
\subfloat[$t=1.2$]{\includegraphics[trim={35cm 5cm 35cm 5cm}, clip=true, scale=0.15]{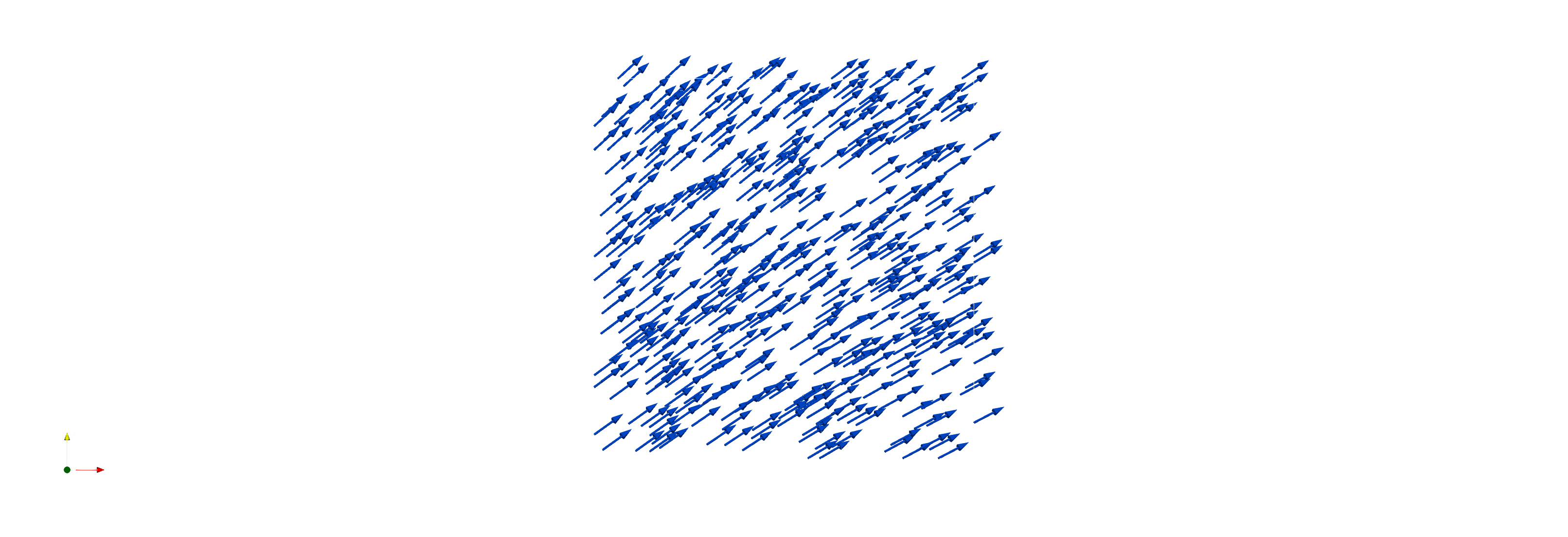}}
\subfloat[$t=1.8$]{\includegraphics[trim={35cm 5cm 35cm 5cm}, clip=true, scale=0.15]{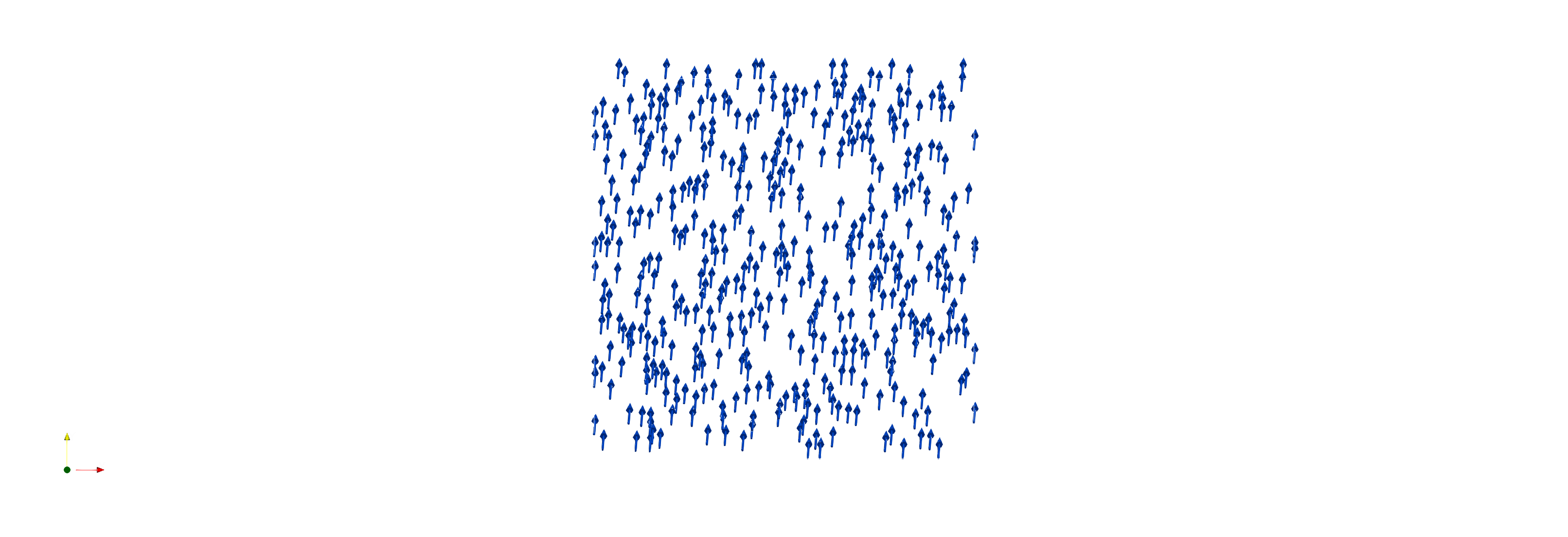}} 
\caption{Spin Orbit Torque}
\label{fig:sot-experiment}
\end{figure}

\begin{figure}
    \centering
    \includegraphics[scale=0.5]{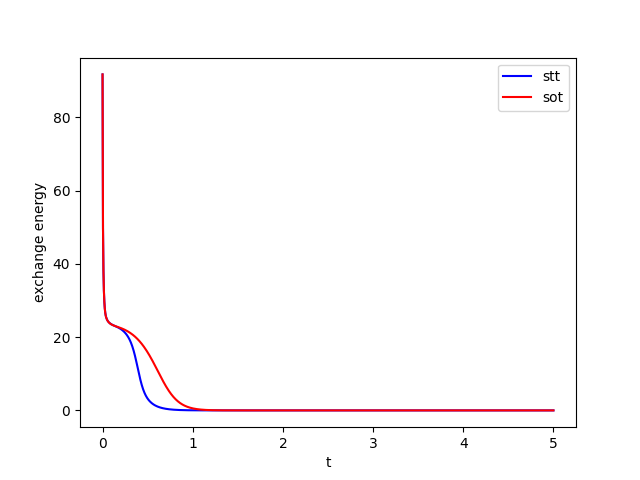}
    \caption{Plot of the energy, $t \mapsto \lnorm{\nabla \vec{m}_{h,k}(t)}{2}^2$}
    \label{fig:energy-plot}
\end{figure}

\section*{Acknowledgements}
The first author is supported by the Australian Government's Research Training Program Scholarship awarded at the University of New South Wales, Sydney. The second author is partially supported by the Australian Research Council under grant number DP190101197 and DP200101866.

\appendix
\section{Useful Results} \label{section-appendix-a}

\begin{theorem}[Aubin's Lemma \cite{aubin1963-article, simon1986-article}] \label{aubins-lemma}
Let $X$, $Y$ and $B$ be Banach spaces such that $X \subset B \subset Y$, where the injection $X \subset B$ is compact and the injection $B \subset Y$ is continuous. Assume that $\{u_k\}_{k=1}^{\infty}$ is a bounded sequence in $L^p(0,T; X)$ such that $\{\partial_t u_k\}_{k=1}^{\infty}$ is bounded in $L^r(0,T; Y)$ where $1 \leq p < \infty$ and $r = 1$, or $p = \infty$ and $r > 1$. Then there exists a subsequence $\{u_{k_j}\}_{j=1}^{\infty}$ which strongly converges in $L^p(0,T; B)$. 
\end{theorem}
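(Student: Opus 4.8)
The plan is to prove the statement in the standard way, as an instance of the Aubin-Lions-Simon compactness lemma, whose engine is Ehrling's inequality: since $X \hookrightarrow B$ is compact and $B \hookrightarrow Y$ is continuous, for every $\eta > 0$ there is a constant $C_\eta$ with $\|v\|_B \le \eta \|v\|_X + C_\eta \|v\|_Y$ for all $v \in X$. I would establish this by contradiction: if it failed for some $\eta_0 > 0$, there would be $v_n \in X$ with $\|v_n\|_B = 1$ but $1 > \eta_0 \|v_n\|_X + n \|v_n\|_Y$; then $\{v_n\}$ is bounded in $X$ and $\|v_n\|_Y \to 0$, so a subsequence converges in $B$ to some $v$ with $\|v\|_B = 1$, while continuity of $B \hookrightarrow Y$ forces $v_n \to v$ in $Y$, whence $v = 0$, a contradiction. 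Applying this pointwise in $t$ and using the triangle inequality in $L^p(0,T)$ yields, for any indices $j,k$, the bound $\|u_j - u_k\|_{L^p(0,T;B)} \le 2\eta M + C_\eta \|u_j - u_k\|_{L^p(0,T;Y)}$ with $M := \sup_k \|u_k\|_{L^p(0,T;X)}$. Thus it suffices to extract a subsequence that is Cauchy in $L^p(0,T;Y)$; letting $\eta = \eta_n \downarrow 0$ in this bound and diagonalising then produces a subsequence Cauchy in $L^p(0,T;B)$, which is the assertion.

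To get the Cauchy property in $L^p(0,T;Y)$ I would separate the two admissible regimes. When $p = \infty$ and $r > 1$, Holder's inequality on the interval $[s,t]$ gives $\|u_k(t) - u_k(s)\|_Y \le |t - s|^{1 - 1/r}\,\|\partial_t u_k\|_{L^r(0,T;Y)}$, so $\{u_k\}$ is equi-Holder continuous with values in $Y$; together with its uniform bound in $L^\infty(0,T;X)$ — which, identifying $u_k$ with its time-continuous representative, makes $\{u_k(t)\}$ bounded in $X$ for every $t$ — Ehrling's inequality upgrades the equicontinuity to $B$, and the Arzel\`a-Ascoli theorem yields a subsequence converging in $C([0,T];B) \hookrightarrow L^\infty(0,T;B)$. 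When $p < \infty$ and $r = 1$, I would instead verify the Fr\'echet-Kolmogorov criterion for relative compactness in the Bochner space $L^p(0,T;B)$ (see \cite{simon1986-article}): that (i) for every $0 \le a < b \le T$ the averages $(b-a)^{-1}\int_a^b u_k(t)\,{\rm d}t$ lie in a fixed bounded subset of $X$, hence in a precompact subset of $B$, which is immediate from the $L^p(0,T;X)$ bound and Holder's inequality in time; and (ii) $\sup_k \|\tau_h u_k - u_k\|_{L^p(0,T-h;B)} \to 0$ as $h \to 0^+$. For (ii) one splits $\tau_h u_k - u_k$ via Ehrling, bounds the $X$-part uniformly by $2\eta M$, and treats the $Y$-part starting from the elementary estimate $\|\tau_h u_k - u_k\|_{L^1(0,T-h;Y)} \le h\,\|\partial_t u_k\|_{L^1(0,T;Y)}$ (write $u_k(t+h) - u_k(t) = \int_t^{t+h} \partial_s u_k(s)\,{\rm d}s$ and integrate in $t$); when $r > 1$ this last estimate improves at once to $O(h^{1-1/r})$ in $L^\infty(0,T-h;Y)$, which is the situation in every application made in this paper.

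The main obstacle is precisely this last point in the regime $p > 1$, $r = 1$: the $L^1$-in-time smallness of the translates cannot be promoted to $L^p$-in-time smallness by a naive Holder interpolation against the $L^p(0,T;X)$ bound, so one must run the Kolmogorov criterion with the truncation argument of \cite{simon1986-article} (cutting the translate on the set where its $Y$-norm is large), or simply invoke \cite{aubin1963-article, simon1986-article} for this case. Everything else — Ehrling's inequality, the existence of the time-continuous representative of an element of $L^p(0,T;X) \cap W^{1,r}(0,T;Y)$, and the Bochner-space forms of the Arzel\`a-Ascoli and Fr\'echet-Kolmogorov theorems — is classical, so the proof amounts to assembling these ingredients with the two elementary time-translation estimates recorded above.
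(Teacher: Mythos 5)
The paper does not actually prove this statement: Theorem~\ref{aubins-lemma} is quoted from the literature and used as a black box, so there is no ``paper's proof'' to compare against. Your outline is the standard Aubin--Lions--Simon argument (Ehrling's inequality plus Arzel\`a--Ascoli in the $p=\infty$, $r>1$ case, plus the Fr\'echet--Kolmogorov criterion in the $p<\infty$, $r=1$ case), and it is sound in structure. You correctly identify, and explicitly flag, the one genuinely delicate step: when $1<p<\infty$ and $r=1$, the $O(h)$ bound on translates in $L^1(0,T-h;Y)$ does not upgrade to $L^p(0,T-h;B)$-smallness by naive interpolation against the $L^p(0,T;X)$ bound, and one must use Simon's splitting/truncation argument. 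Since the paper itself simply cites \cite{aubin1963-article, simon1986-article} at exactly this point, deferring to those references there is legitimate and not a weaker move than the paper makes.

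One smaller imprecision in your $p=\infty$, $r>1$ branch: boundedness in $L^\infty(0,T;X)$ gives $\|u_k(t)\|_X\le M$ only for a.e.\ $t$, not for every $t$ of the time-continuous representative (which a priori only takes values in $Y$). What Arzel\`a--Ascoli actually needs is that, for each fixed $t$, the set $\{u_k(t)\}_k$ is relatively compact in $B$; this follows because the averages $a^{-1}\int_0^a u_k(t+s)\,{\rm d}s$ lie in the $X$-ball of radius $M$ (hence in a fixed $B$-compact set) and converge to $u_k(t)$ in $Y$ as $a\to 0$, so $u_k(t)$ lies in the $B$-closure of that ball. Phrasing it as ``$u_k(t)$ bounded in $X$ for every $t$'' would require an extra hypothesis (e.g.\ $X$ reflexive), which the theorem does not assume; with the precompactness-in-$B$ reformulation the argument closes as you intend.
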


\begin{theorem}[Generalised Gronwall Lemma \cite{beckenbach1961-book}] \label{generalised-gronwall-lemma}
Assume $u : [a,b] \to [0,\infty)$ and $\beta : [a,b] \to [0,\infty)$ along with non-decreasing $g : [0,\infty) \to [0,\infty)$ satisfy
\begin{equation*}
    u(t) \leq \alpha + \int_a^t \beta(s) g(u(s)) \; {\rm d}s \quad \forall t \in [a,b],
\end{equation*}
where $\alpha$ is a positive constant and $[a,b] \subset [0,\infty)$. Then
\begin{equation*}
    u(t) \leq G^{-1}\left(\int_a^t \beta(s) \; {\rm d}s\right), \quad t \in \RR,
\end{equation*}
where $G^{-1}$ is the inverse function of
\begin{equation*}
    G(\sigma) := \int_{\alpha}^{\sigma} \frac{1}{g(s)} \; {\rm d}s, \quad \sigma \geq 0,
\end{equation*}
and
\begin{equation*}
    \RR = \left\{t \in [a,b] : \int_a^t \beta(s) \; {\rm d}s \in G([0,\infty))\right\}.
\end{equation*}
\end{theorem}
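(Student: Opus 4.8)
The plan is to reduce the integral inequality to a separable differential inequality for its majorant and then integrate. First I would introduce
$v(t) := \alpha + \int_a^t \beta(s)\, g(u(s))\,\mathrm{d}s$, which is absolutely continuous on $[a,b]$, non-decreasing because $\beta \geq 0$ and $g \geq 0$, satisfies $v(a) = \alpha > 0$ and hence $v(t) \geq \alpha > 0$ throughout, and by hypothesis dominates $u$, i.e. $u(t) \leq v(t)$. Since $g$ is non-decreasing, $g(u(s)) \leq g(v(s))$, so for almost every $t$ one obtains the pointwise differential inequality $v'(t) = \beta(t)\, g(u(t)) \leq \beta(t)\, g(v(t))$.

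Next I would divide by $g(v(t))$, which is permissible because $v(t) \geq \alpha$ and $g$ is strictly positive on $[\alpha,\infty)$ — this positivity is implicit in the very definition of $G$, whose integrand is $1/g$. Recognising that $G(\sigma) = \int_\alpha^\sigma \frac{1}{g(s)}\,\mathrm{d}s$ has derivative $1/g$, the left-hand side is exactly $\frac{\mathrm{d}}{\mathrm{d}t} G(v(t))$, via the chain rule for the composition of the absolutely continuous non-decreasing function $v$ with the monotone, locally absolutely continuous function $G$. Integrating from $a$ to $t$ and using $G(v(a)) = G(\alpha) = 0$ then gives $G(v(t)) \leq \int_a^t \beta(s)\,\mathrm{d}s$.

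Finally, $G$ is strictly increasing on $[0,\infty)$ (its integrand $1/g$ is strictly positive), so it possesses a strictly increasing inverse $G^{-1}$ defined on $G([0,\infty))$. Whenever $t \in \RR$, meaning $\int_a^t \beta(s)\,\mathrm{d}s$ lies in that range, applying $G^{-1}$ to both sides yields $v(t) \leq G^{-1}\bigl(\int_a^t \beta(s)\,\mathrm{d}s\bigr)$, and since $u(t) \leq v(t)$ the asserted bound follows on $\RR$.

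The main point here is not a deep obstacle but careful measure-theoretic bookkeeping: justifying that $v$ is absolutely continuous so that the fundamental theorem of calculus applies to it; justifying the chain rule for $G \circ v$ when $g$ is only assumed monotone (one may, if desired, first replace $g$ by a continuous non-decreasing minorant, or appeal to the chain rule valid for monotone absolutely continuous compositions); and keeping track precisely of the set $\RR$ on which $G^{-1}$ is defined, so that the conclusion is stated only where it is meaningful. If one is uneasy about $\alpha$ sitting on the boundary of the domain of $G$, running the same argument with $\alpha + \varepsilon$ in place of $\alpha$ and letting $\varepsilon \downarrow 0$ recovers the stated estimate by continuity of $G^{-1}$.
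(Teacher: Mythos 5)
The paper does not prove this lemma; it is stated as a known result with a citation to Beckenbach and Bellman, so there is no in-paper proof to compare your argument against. On its own merits, your argument is the standard Bihari--LaSalle derivation and is correct: you introduce the absolutely continuous, non-decreasing majorant $v(t) = \alpha + \int_a^t \beta(s)\,g(u(s))\,\mathrm{d}s \ge \alpha > 0$, pass to the a.e.\ differential inequality $v'(t) \le \beta(t)\,g(v(t))$ via the monotonicity of $g$, divide by $g(v(t))$ (legitimate since $v \ge \alpha$ and $g$ must be strictly positive on $[\alpha,\infty)$ for the integrand $1/g$ defining $G$ to be finite), integrate to get $G(v(t)) \le \int_a^t \beta(s)\,\mathrm{d}s$ using $G(\alpha)=0$, and invert the strictly increasing $G$ on the set $\RR$ where the right-hand side lies in $G([0,\infty))$, finishing with $u \le v$. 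The technical points you flag are exactly the ones needing care, and your remedies are adequate: $G$ is locally Lipschitz on $[\alpha,\infty)$ because $1/g \le 1/g(\alpha)$ there, so the chain rule for $G\circ v$ with $v$ absolutely continuous is available (Serrin--Varberg), and either the continuous-minorant replacement for $g$ or the $\alpha+\varepsilon$ regularisation closes any remaining gaps. This is an acceptable proof of the cited inequality.
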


\begin{theorem}[\cite{evans2010-book}] \label{time-continuity-theorem}
Assume that $\Omega$ is open, bounded, and $\partial \Omega$ is smooth. Take $m$ to be a non-negative integer. Suppose that $\vec{u} \in L^2(0,T; \HB^{m+2}(\Omega))$, with $\partial_t \vec{u} \in L^2(0,T; \HB^m(\Omega))$. Then
\begin{equation*}
    \vec{u} \in C([0,T]; \HB^{m+1}(\Omega)).
\end{equation*}
\end{theorem}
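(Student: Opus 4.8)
This is a classical interpolation-type continuity result (cf.\ \cite[Section 5.9.2]{evans2010-book} and the Lions--Magenes theory); the plan is to reduce it to a Fourier-space energy estimate on $\R^d$ after extending $\vec{u}$ off $\Omega$, and then to justify that estimate by mollifying in time.

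\textbf{Step 1 (extension).} Using the smoothness of $\partial\Omega$, fix a single bounded linear extension operator $E$ that is simultaneously bounded $\HB^{s}(\Omega)\to\HB^{s}(\R^d)$ for $s=m,\ m+1,\ m+2$ (a higher-order reflection, or Stein's extension, has this property). Since $E$ does not depend on $t$, the function $\vec{w}(t):=E(\vec{u}(t))$ satisfies $\vec{w}\in L^2(0,T;\HB^{m+2}(\R^d))$ and $\partial_t\vec{w}=E(\partial_t\vec{u})\in L^2(0,T;\HB^{m}(\R^d))$. Because the restriction map $\HB^{m+1}(\R^d)\to\HB^{m+1}(\Omega)$ is continuous and $\vec{w}|_{\Omega}=\vec{u}$, it suffices to prove $\vec{w}\in C([0,T];\HB^{m+1}(\R^d))$.

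\textbf{Step 2 (mollification and the energy estimate).} Extend $\vec{w}$ in time to a slightly larger interval (e.g.\ by reflection across $t=0$ and $t=T$) and set $\vec{w}_\varepsilon:=\rho_\varepsilon\ast_t\vec{w}$, which is smooth in $t$ with values in $\HB^{m+2}(\R^d)$ and satisfies $\vec{w}_\varepsilon\to\vec{w}$ in $L^2(0,T;\HB^{m+2}(\R^d))$ and $\partial_t\vec{w}_\varepsilon\to\partial_t\vec{w}$ in $L^2(0,T;\HB^{m}(\R^d))$. Writing the $\HB^{m+1}(\R^d)$-norm via the Fourier transform and using $(1+|\xi|^2)^{m+1}=(1+|\xi|^2)^{(m+2)/2}(1+|\xi|^2)^{m/2}$ together with Cauchy--Schwarz in $\xi$ gives, for the smooth-in-time differences,
\begin{equation*}
    \left|\ddt\left\|\vec{w}_\varepsilon(t)-\vec{w}_\delta(t)\right\|_{\HB^{m+1}(\R^d)}^2\right|
    \leq
    2\left\|\partial_t\vec{w}_\varepsilon(t)-\partial_t\vec{w}_\delta(t)\right\|_{\HB^{m}(\R^d)}\left\|\vec{w}_\varepsilon(t)-\vec{w}_\delta(t)\right\|_{\HB^{m+2}(\R^d)}.
\end{equation*}

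\textbf{Step 3 (Cauchy in $C([0,T];\HB^{m+1})$).} Integrating the last display from $s$ to $t$ and then averaging over $s\in[0,T]$, for every fixed $t$ one bounds $\|\vec{w}_\varepsilon(t)-\vec{w}_\delta(t)\|_{\HB^{m+1}(\R^d)}^2$ by $\tfrac{1}{T}\|\vec{w}_\varepsilon-\vec{w}_\delta\|_{L^2(0,T;\HB^{m+1}(\R^d))}^2 + 2\|\partial_t\vec{w}_\varepsilon-\partial_t\vec{w}_\delta\|_{L^2(0,T;\HB^{m}(\R^d))}\|\vec{w}_\varepsilon-\vec{w}_\delta\|_{L^2(0,T;\HB^{m+2}(\R^d))}$ (the cross term via Cauchy--Schwarz in time), which is independent of $t$ and tends to $0$ as $\varepsilon,\delta\to0$. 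Hence $\{\vec{w}_\varepsilon\}$ is Cauchy in $C([0,T];\HB^{m+1}(\R^d))$; its limit coincides a.e.\ with $\vec{w}$, so $\vec{w}$ has a representative in $C([0,T];\HB^{m+1}(\R^d))$, and restricting back to $\Omega$ finishes the proof.

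\textbf{Main obstacle.} The only genuinely nontrivial ingredient is the construction (or invocation) of the \emph{simultaneous} extension operator bounded on the three consecutive Sobolev scales, together with the in-time extension needed to mollify up to the endpoints $t=0,T$; everything else is bookkeeping. An alternative that sidesteps the Fourier computation is to observe $\HB^{m+1}(\Omega)=[\HB^{m+2}(\Omega),\HB^{m}(\Omega)]_{1/2}$ and quote the abstract Lions--Magenes trace/interpolation theorem directly, but the extension-plus-mollification route keeps the argument self-contained.
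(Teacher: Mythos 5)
The paper does not prove this statement but simply cites it to Evans (Section 5.9.2, Theorem 4, together with the remark on higher regularity), so there is no in-paper proof to compare against. Your argument is correct and is essentially the same classical extension + time-mollification + energy-estimate route that Evans uses for the $m=0$ case, adapted to the scale $\HB^{m+2}\subset\HB^{m+1}\subset\HB^{m}$ by replacing the Gelfand-triple duality pairing with the Fourier-side Cauchy--Schwarz estimate $(1+|\xi|^2)^{m+1}\le(1+|\xi|^2)^{(m+2)/2}(1+|\xi|^2)^{m/2}$ after extending to $\R^d$.
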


\begin{theorem}[\cite{bartels2006vol43-article}] \label{section-2-bartels-theorem}
    For piecewise linear finite elements, assume that for all $1 \leq i,j \leq J$ such that $i \neq j$
    \begin{equation} \label{section-2-bartels-condition}
        \int_{\Omega} \nabla \phi_i \cdot \nabla \phi_j \; {\rm d}\vec{x}\leq 0.
    \end{equation}
    Then for all $\vec{u} \in \V_h$ satisfying $|\vec{u}(\vec{x}_n)| \geq 1$, where $1 \leq n \leq N$, we have
    \begin{equation*}
        \int_{\Omega} \left|\nabla I_{\V_h}\left(\frac{\vec{u}}{|\vec{u}|}\right)\right|^2 \; {\rm d}\vec{x} \leq \int_{\Omega} |\nabla \vec{u}|^2 \; {\rm d} \vec{x}.
    \end{equation*}
\end{theorem}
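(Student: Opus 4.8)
The plan is to reduce the claim to an edge-by-edge comparison on the triangulation. Write $k_{ij} := \int_{\Omega} \nabla\phi_i \cdot \nabla\phi_j \, {\rm d}\vec{x}$, so that hypothesis \eqref{section-2-bartels-condition} says exactly that $k_{ij} \leq 0$ for $i \neq j$. For any $\vec{w} = \sum_{n=1}^N \vec{w}_n \phi_n \in \V_h$ with nodal values $\vec{w}_n = \vec{w}(\vec{x}_n) \in \R^3$ one has $\int_{\Omega} |\nabla \vec{w}|^2 \, {\rm d}\vec{x} = \sum_{i,j} (\vec{w}_i \cdot \vec{w}_j) k_{ij}$. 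Since $\sum_{n=1}^N \phi_n \equiv 1$ we get $\sum_n \nabla\phi_n \equiv 0$, hence $\sum_j k_{ij} = \sum_i k_{ij} = 0$; expanding $|\vec{w}_i - \vec{w}_j|^2$ and using these relations gives the edge identity
\[
    \int_{\Omega} |\nabla \vec{w}|^2 \, {\rm d}\vec{x} = -\tfrac12 \sum_{i \neq j} k_{ij}\, |\vec{w}_i - \vec{w}_j|^2 .
\]

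Next I would apply this identity once to $\vec{w} = \vec{u}$ and once to $\vec{w} = I_{\V_h}(\vec{u}/|\vec{u}|)$; the latter makes sense because $|\vec{u}(\vec{x}_n)| \geq 1 > 0$, and its nodal values are $\vec{u}_n/|\vec{u}_n|$. Since $-k_{ij} \geq 0$ for every $i \neq j$, the asserted energy estimate will follow term by term from the purely pointwise bound
\[
    \left| \frac{\vec{u}_i}{|\vec{u}_i|} - \frac{\vec{u}_j}{|\vec{u}_j|} \right| \leq |\vec{u}_i - \vec{u}_j| \qquad \text{whenever } |\vec{u}_i|,\, |\vec{u}_j| \geq 1 ,
\]
after which one sums $-\tfrac12 k_{ij}$ times this inequality over $i \neq j$ and reads off the conclusion from the two edge identities.

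It remains to establish the pointwise inequality. Putting $a = \vec{u}_i$, $b = \vec{u}_j$ and squaring both sides, it is equivalent to $2(a\cdot b)\bigl(1 - \tfrac{1}{|a||b|}\bigr) \leq |a|^2 + |b|^2 - 2$. If $a \cdot b \leq 0$ this is clear, because $|a||b| \geq 1$ makes the left side nonpositive while the right side is nonnegative. If $a \cdot b > 0$, then $a\cdot b \leq |a||b|$ and $1 - \tfrac{1}{|a||b|} \geq 0$, so the left side is at most $2|a||b| - 2$, which is $\leq |a|^2 + |b|^2 - 2$ since $2|a||b| \leq |a|^2 + |b|^2$.

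There is no deep obstacle here; the essential point — and the only place the angle condition \eqref{section-2-bartels-condition} is used — is that the quadratic Dirichlet energy can be written as a \emph{nonnegative} combination of squared edge increments, so that a quadratic-form inequality degenerates into one scalar inequality per edge. The hypothesis $|\vec{u}(\vec{x}_n)| \geq 1$ is equally essential and enters only in the last step: the pointwise inequality fails for vectors strictly inside the unit ball, so one must invoke it only at nodes where the bound on $|\vec{u}|$ holds.
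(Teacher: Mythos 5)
Your proof is correct and is essentially the standard argument from the cited reference \cite{bartels2006vol43-article}, which the paper invokes without reproducing: rewrite the Dirichlet energy as a nonnegative sum over edges using the stiffness coefficients $k_{ij}$ and the row-sum identity $\sum_j k_{ij}=0$, then apply the pointwise fact that the radial projection onto the unit sphere is 1-Lipschitz on $\{|\vec{a}|\ge 1\}$. Both the edge identity and the pointwise inequality are verified correctly, so the argument is complete.
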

It can be shown that the assumptions of the theorem above are satisfied in 2D for Delaunay triangulations and in 3D for triangulations that have their diheral angles less than $\pi/2$ \cite{vanselow2001-article}.

\bibliographystyle{plain}
\bibliography{Bibliography}

\end{document}